\newtheorem{theorem}{Theorem}
\newtheorem{corollary}[theorem]{Corollary}
\newtheorem{lemma}[theorem]{Lemma}
\newtheorem{proposition}[theorem]{Proposition}
\newtheorem{remark}[theorem]{Remark}
\def\bds{\begin{displaystyle}}
\def\eds{\end{displaystyle}}
\renewcommand{\P}{{\mathbb{P}}}
\newcommand{\E}{{\mathbb{E}}}
\newcommand{\D}{{\mathbb{D}}}
\newcommand{\R}{{\mathbb{R}}}
\newcommand{\N}{{\mathbb{N}}}
\newcommand{\T}{{\mathbb{T}}}
\newcommand{\q}{{\quad}}
\begin{document}

\begin{frontmatter}

\title{Convergence of the age structure of general schemes of population processes}
\runtitle{Convergence of Age Structure Processes} 

\begin{aug}



\author{\fnms{Jie Yen} \snm{Fan}\thanksref{a}\ead[label=e1]{jieyen.fan@monash.edu}},
\author{\fnms{Kais} \snm{Hamza*}\thanksref{a}\corref{}\ead[label=e2]{kais.hamza@monash.edu}},
\author{\fnms{Peter} \snm{Jagers}\thanksref{b}\ead[label=e3]{jagers@chalmers.se}}
\and
\author{\fnms{Fima} \snm{Klebaner}\thanksref{a}\ead[label=e4]{fima.klebaner@monash.edu}}

\runauthor{Fan, Hamza, Jagers and Klebaner}

\affiliation{Monash University\thanksmark{a} and
Chalmers University of Technology and University of Gothenburg\thanksmark{b}}

\address[a]{School of Mathematical Sciences, Monash University, Clayton, VIC 3800, Australia.
\printead{e1,e2,e4}}

\address[b]{Department of Mathematical Sciences, Chalmers University of Technology and University of Gothenburg, SE-412 96 Gothenburg, Sweden.
\printead{e3}}

\end{aug}

\begin{abstract} 
We consider a family of general branching processes with reproduction parameters  depending on the age of the individual as well as
the population age structure and a parameter $K$, which may  represent the carrying capacity. These processes are Markovian in the age structure.
In a previous paper \cite{HamJagKle13} the Law of Large Numbers as $K\to\infty$ was derived.
Here we prove the Central Limit Theorem, namely the weak convergence of the fluctuation processes in an appropriate Skorokhod space.
We also show that the limit is driven by a stochastic partial differential equation.
\end{abstract}

\begin{keyword}[class=MSC]
\kwd[Primary ]{60J80} 
{60F05} 
\kwd[; secondary ]{92D25} 
\end{keyword}

\begin{keyword}
\kwd{Age-structure dependent population processes}
\kwd{carrying capacity}
\kwd{central limit theorem}
\end{keyword}

\end{frontmatter}


\section{Introduction}

A branching process is used to model a system of particles where each particle has a random lifespan
and gives birth to a random number of offspring at some point during lifetime or at death.
Classical frameworks of branching process include the Galton-Watson process in discrete time
and the Bellman-Harris branching process in continuous time.
In the Bellman-Harris framework, particles, independently of each other and with the same law,
live for a random length of time and reproduce at death a random number of offspring.
In this paper, we consider a much more general framework
introduced by Jagers and Klebaner (\cite{JagKle00}, \cite{JagKle11}).

Consider a population of size $z$ with ages $(a_1,a_2,\ldots,a_z)$.
This age structure can be represented by the measure $A=\sum_{i=1}^z \delta_{a_i}$ on ${\mathcal B}$, the Borel $\sigma$-field of $\R_+$,
where $\delta_a$ denotes the Dirac measure at $a$.
In particular, for a measurable set $B$, $A(B)$ represents the number of individuals with ages in $B$.
While the size of the population at time $t$ in the Bellman-Harris process is not Markov, the measure-valued process of ages  is.
The Markov property remains even when the life span and reproduction of individuals are allowed to depend on the whole population.

We allow reproduction and death to depend on not only the individual's age and the size of the population, but also the entire age structure of the population. 
In particular, as given in the examples in Section \ref{S:Example}, 
the reproduction and death could depend on the age, the population size, 
as well as other demographic features, through a so-called demographic kernel.
We allow also the parameters to depend on some parameter $K$, which could play the role of the carrying capacity of the habitat (\cite{JagKle11}).
Multiple offspring during life and at death is possible, to have a rather general model.
We are interested in the approximations when $K$ is large.

Similar questions have been answered in \cite{Kle93} and \cite{KleNer94} under the Galton-Watson setting,
where the reproduction of each particle depends on the carrying capacity,
but is otherwise independent and identically distributed conditionally
on the carrying capacity and the size of the population.
Oelschl\"ager \cite{Oel90} also answered a similar question in the context of birth-death processes,
deriving a Law of Large Numbers (LLN) and a Central Limit Theorem (CLT) for the empirical processes
of age-structured populations as the population size tends to infinity.

Tran  \cite{Tra06} (also \cite{Tra08} and \cite{FerTra09})
obtained a LLN and a CLT for a population model structured by traits and ages (not just  the physical age).
He generalises the standard model by including the
possibility of trait mutations and interactions (through a kernel) among individuals,
while keeping the dependence of the reproduction on
just the state (traits and ages) of that individual.
In contrast, we allow the births and deaths to depend on the age structure of the whole population.
Kaspi and Ramanan (\cite{KasRam11} and \cite{KasRam13}) obtained LLN and CLT for measure-valued queuing processes,
which inspired   this paper.

Convergence of measure-valued processes has been studied in various settings over the last decades.
This has been done also in the context of population or particle systems,
either giving results of the type of LLN only (e.g. \cite{Bor90}, \cite{MelTra12}, \cite{Met87}),
or together with CLT(e.g. \cite{BanEtal11}, \cite{Li11}, \cite{Mel98}, \cite{Oel90}, \cite{Tra06}).

The LLN for our model, given in \cite{HamJagKle13}, shows that under suitable assumptions on the parameters, the sequence of measure-valued processes $\bar A^K = A^K/K$ converges as $K\to \infty$ to a deterministic process $\bar A$ in the Skorokhod space $\D(\R_+, {\mathcal M}_+(\R_+))$, where ${\mathcal M}_+(\R_+)$ is the space of finite positive measures on $\R_+$, with its weak topology.
The limiting process is identified as the weak form of a generalised McKendrick-von Foerster Equation.
In this paper, we establish the CLT (see Theorem \ref{T:CLT}) for the age structure,
that is, the convergence of $Z^K = \sqrt{K}(\bar A^K- \bar A)$ in an appropriate space, and identify the limit.
In the limit (CLT), Fr\'echet derivatives of the rate functions naturally appear. They replace the ordinary derivatives in the density-dependent case where dependence is on the total mass of the measure.
Our CLT yields new results even in the classical case of constant parameters.

As usual, to establish convergence we show tightness and uniqueness of the limit.
The tightness is proved by using the Sobolev embedding approach and Aldous-Rebolledo tightness criteria, 
the method used in Bansaye et. al. \cite{BanEtal11}, Meleard \cite{Mel98}, and Tran \cite{Tra06}.
Since $Z^K$ is a signed measure-valued process, and the space of signed measures with the topology of weak convergence is not metrizable (\cite{BanEtal11}, \cite{Mel98}, \cite{Var58}), 
we embed the space of signed measures in suitable Sobolev spaces
(which are also Hilbert spaces),
and apply Sobolev embedding techniques with some Hilbertian properties.

While the Sobolev embedding technique has been much used (e.g. \cite{BanEtal11}, \cite{Bor90}, \cite{Mel98})  since being introduced by  Metivier \cite{Met87},  and  there
are seminal papers in the field  such as \cite{BanEtal11} and \cite{KasRam13}, our approach
has a number of differences.
We set up evolution equations for a branching process, fusing branching and stochastic analysis. This  is done by using the Ulam-Harris representation.
A simplifying technical feature of our model is that we can work on the bounded domain $\mathbb{T}^*:=[0,T+a^*]$, where $a^*$ is the age of the oldest individual alive at time 0 and we consider a finite time horizon $\T=[0,T]$. (Thus, $T+a^*$ is an upper bound to the age of the oldest individual alive at time $T$.) This boundedness of domain avoids the use of weighted Sobolev spaces (see page \pageref{Tstar}).

Section \ref{S:Model} sets up the model and gives a semimartingale representation to the process, 
with the proofs of some details postponed till Section \ref{S:SemiMGProofs}. 
Main results are stated in Section \ref{S:Results}, with the proof of the CLT in Section \ref{S:Proof} and the proofs of further results in Section \ref{S:ZInftyProofs}. Section \ref{S:Example} ends the paper with some examples.

Throughout this paper, we use $c$ with and without subscript
to denote constants that may be different from line to line, but all 
independent of $K$. $\N$ stands for the set of natural numbers   and
$\N_0$ for the set of non-negative integers.
For a Borel (positive or signed) measure $\mu$ on $E$ and a measurable function $f$ on $E$,
we write $(f,\mu) \equiv \int_E f(x) \mu(dx)$. 
The Skorokhod space $\mathbb{D}(\mathbb{T},\mathcal{M})$ consists of all c\`adl\`ag functions from $\mathbb{T}$ to $\mathcal{M}$.
We will take $\mathcal{M}$ to be a space of measures (for LLN)
and the dual of a suitable Sobolev space of functions (for CLT).

\section{Evolution equation and semimartingale decomposition} \label{S:Model}

In this section we set up the model and derive a semimartingale decomposition of the branching model, but leave the technical proofs to Section \ref{S:SemiMGProofs}.

We shall adopt the classical, well-known in branching (e.g. \cite{Har63}), Ulam-Harris labelling, as presented in \cite{Jag75} and developed in \cite{Jag89}.
We use the set
$$I = \bigcup_{n=1}^\infty \N^n$$
to denote all possible individuals;
$\N$ corresponds to the possible individuals of the starting generation,
$\N^2$ corresponds to the possible individuals of the second generation, and so forth.
We allow an arbitrary finite number of individuals at the start of the process at time $t=0$.
The individuals in the first (starting) generation are labelled $1,2,3,\dots$.
For each individual $y\in I$, the children of $y$ are consecutively labelled $y1, y2, y3, \dots$ as they are born.
Here $yi$ is the concatenated vector of the coordinates of $y\in I$ and $i\in \N$.

We assume that the age of each individual increases at rate 1 until the individual dies.
Upon death it may split into a random   number of offspring.
During its lifetime the individual may  give birth to a random  number of offspring.
The offspring generated in both situations are referred to as the children of the individual, and both situations are considered as births.

We denote by $\tau_y$, $\lambda_y$ and $\sigma_y = \tau_y+\lambda_y$ respectively the time of birth, the life span and the death time of individual $y$.
In particular, the maternal age for the birth of the $j$th child (during lifetime or by splitting at death) of individual $y$ is $\tau_{yj} - \tau_y$.
Also, if $y$ has precisely $n$ children, then
$$\tau_y < \tau_{y1} \le\ldots\le \tau_{yn} \le \sigma_y\mbox{ and }\tau_{y(n+1)} = \infty.$$

The population starts from an initial age distribution $A_0$ with mass one at given ages $x_1, x_2, \dots, x_{(1,A_0)}$
and the population size $(1,A_0)$ is assumed to be finite.
Put $\tau_i = -x_i$, $i=1,2,\dots,(1,A_0)$ for the birth times of these ancestors (first generation).

The age distribution $A_t$ at time $t\ge0$ allots a unit weight to the age ($t-\tau_y$) of each individual ($y\in I$) that is alive at time $t$,
\begin{equation}\label{aget}
A_t = \sum_{y\in I} \mathbf{1}_{\tau_y\le t<\sigma_y}\delta_{t-\tau_y} .
\end{equation}
For each $t$, $A_t$ is a finite discrete measure on $\R_+$, in particular, $A_t \in \mathcal{M}_+(\R_+)$,
and the collection $(A_t)_{t\ge0}$ is known as the age structure process of the population.

Two processes that determine the evolution of population are the way the individuals enter and the way they exit.
Denote by  $B(t)$ the number of individuals born by time $t$, and by $D(x,t)$ the number of individuals who died by time $t$ and whose life span was $x$ or less, then
$$B(t) = \sum_{y\in I} \mathbf{1}_{\tau_y\le t},\q D(x,t) = \sum_{y\in I} \mathbf{1}_{\lambda_y\le x, \sigma_y\le t}.$$
Before we give the fundamental equation for the   evolution of the population, we make an important observation (which allows us to work on a bounded time interval and to avoid using weighted Sobolev spaces).

Recall that $a^*$ is the age of the oldest individual in the starting generation, that is,
$$a^* = \inf\{x>0: A_0((x,\infty))=0\}.$$
Since we look at the convergence on a finite time interval $[0,T]$,
the age of any individual at time $t\le T$ will not be more than $T+a^*$,
thus the support of $A_t$ is contained in $[0,T+a^*]$.
Henceforth denote by $T^*=T+a^*$. \label{Tstar}

While our focus is indeed on functions of a single variable, the proof of the CLT requires a semimartingale decomposition for functions of two variables.
Consequently, we consider test functions of two variables $f(x,t)$ whose domain is limited to the bounded rectangle $\mathbb{T}^*\times\mathbb{T}$, where $\T^*=[0,T^*]$ is the age space and $\T=[0,T]$ is the time space. 
In what follows, we will also write $f_t(x)$ to mean $f(x,t)$ and use the two notations interchangeably.

We have the following basic equation, with proof in Section \ref{S:SemiMGProofs}.

\begin{proposition} \label{P:BasicEqnInfty}
For any $f\in C^{1,1}(\mathbb{T}^*\times\mathbb{T})$ and $t\in \mathbb{T}$,
the age structure process $A$ satisfies
\begin{multline}  \label{Fund1}
(f_t,A_t) = (f_0,A_0)
+ \int_0^t ( \partial_1f_s+\partial_2f_s,A_s ) ds \\
+\int_{[0,t]} f(0,s)B(ds) - \int_{\mathbb{T}^*\times[0,t]} f(x,s)D(dx,ds).
\end{multline}
\end{proposition}

To arrive at  compensators for the two processes in the RHS of \eqref{Fund1}, we assume the existence
of birth and death rates,   dependent on the age and also upon the
population age structure (cf. \cite{JagKle00}).
The number of births by time $t$ consists of births by living
mothers and births by splitting, $B= \widecheck B + \widehat B$. An individual  aged $x$ at time $t$ gives birth at rate $b_{A_t}(x)$ and dies at rate $h_{A_t}(x)$, allowing for multiple births.

Denote the random variables $\widecheck \xi_{A_t}(x)$ and $\widehat \xi_{A_t}(x)$ the number of children at a bearing of a living individual aged $x$ at time $t$ and at splitting (i.e. death), respectively. 
Let $\widecheck m_{A_t}(x) = \E[\widecheck \xi_{A_t}(x)| A_t]$ and  $\widehat m_{A_t}(x) = \E[\widehat \xi_{A_t}(x)| A_t]$. 
Thus the mean intensity of births of an individual aged $x$ at time $t$ is
$\widecheck m_{A_t}(x)b_{A_t}(x)+\widehat m_{A_t}(x)h_{A_t}(x)$.
We also denote the conditional second moment of the
number of children at a bearing of a living individual aged $x$ at time $t$ by
$\widecheck v_{A_t}(x)$, and similarly the conditional second moment of the number of children at
splitting by $\widehat v_{A_t}(x)$.

The compensators of the birth and death terms in \eqref{Fund1} are given by the following results, with proof in Section \ref{S:SemiMGProofs}.

\begin{proposition} \label{P:Compensators}
For every $f\in C(\mathbb{T}^*\times\mathbb{T})$ and $t\in \mathbb{T}$,
\begin{gather*}
\int_0^t(f_sh_{A_s},A_s)ds, 
\q \int_0^t f_s(0) (b_{A_s} \widecheck m_{A_s}, A_s) ds,
\q \mbox{and}\q \int_0^t f_s(0) (h_{A_s} \widehat m_{A_s}, A_s) ds
\end{gather*}
are the compensators of 
\begin{gather*}
\int_{\mathbb{T}^*\times[0,t]} f(x,s)D(dx,ds), 
\q \int_{[0,t]}f(0,s)\widecheck B(ds), 
\q\mbox{and}\q \int_{[0,t]}f(0,s)\widehat B(ds)
\end{gather*}
respectively.
\end{proposition}

Having found the compensators we identify the relevant martingales.
The proof of the following proposition is standard and is therefore omitted.

\begin{proposition} 
The following processes are martingales
\begin{align*}
M_{D,f}(t) &:= \int_{\mathbb{T}^*\times[0,t]} f(x,s)D(dx,ds) - \int_0^t (f_sh_{A_s},A_s) ds \\
M_{\widecheck B,f}(t) &:= \int_{[0,t]} f(0,s) \widecheck B(ds) - \int_0^t f_s(0) (b_{A_s} \widecheck m_{A_s}, A_s) ds \\
M_{\widehat B,f}(t) &:= \int_{[0,t]} f(0,s) \widehat B(ds) - \int_0^t f_s(0) (h_{A_s} \widehat m_{A_s}, A_s) ds
\end{align*}
with predictable quadratic variations
\begin{gather*}
\big<M_{D,f}\big>_t = \int_0^t (f^2_sh_{A_s}, A_s) ds, \q
\big<M_{\widecheck B,f}\big>_t  = \int_0^t f^2_s(0) (b_{A_s}\widecheck v_{A_s}, A_s) ds,\\
\big<M_{\widehat B,f}\big>_t = \int_0^t f^2_s(0) (h_{A_s}\widehat v_{A_s}, A_s) ds.
\end{gather*}
\end{proposition}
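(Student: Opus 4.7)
The martingale property of each of the three processes is essentially immediate from the preceding proposition: having identified the compensators of $Q^f$, $\breve B^f$ and $\widehat B^f$, the differences $M_{D,f}$, $M_{\breve B,f}$ and $M_{\widehat B,f}$ are by definition martingales (with integrability following from the a.s. finiteness of the population on $\mathbb{T}^*$ and boundedness of $f$ on the compact domain). So the real content of the proposition is the computation of the three predictable quadratic variations. My plan is to exploit the fact that each of $Q^f$, $\breve B^f$ and $\widehat B^f$ is a pure-jump process, so that $\langle M\rangle_t$ can be obtained as the predictable compensator of the process $\sum_{s\le t}(\Delta M_s)^2$, and then to recycle the compensator calculation from the previous proposition applied to the squared integrand.

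For $M_{D,f}$ I would argue as follows. The process $Q^f_t=\sum_{y\in I}f(\lambda_y,\sigma_y)\mathbf{1}_{\sigma_y\le t}$ jumps only at the death times $\sigma_y$, with jump size $f(\lambda_y,\sigma_y)$; its compensator $H^f$ is absolutely continuous and hence $\Delta M_{D,f}(s)=\Delta Q^f(s)$. Therefore
\[
[M_{D,f}]_t=\sum_{y\in I}f^2(\lambda_y,\sigma_y)\mathbf{1}_{\sigma_y\le t}=\int_{\mathbb{T}^*\times[0,t]}f^2(x,s)\,D(dx,ds).
\]
Replacing $f$ by $f^2$ in the previous proposition's argument, the predictable compensator of this process is $\int_0^t(f^2(\cdot,s)h_{A_s},A_s)\,ds$, which is therefore $\langle M_{D,f}\rangle_t$.

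The birth martingales are more delicate because a single bearing (or splitting) can contribute several simultaneous births, so $\breve B$ and $\widehat B$ jump by the entire litter size, not by one. Concretely, if $y$ has a bearing at time $s$ with litter of (random) size $N$, then $\breve B^f$ jumps by $Nf(0,s)$, giving $(\Delta\breve B^f_s)^2=N^2f^2(0,s)$. Summing, $[M_{\breve B,f}]_t=\sum N_i^2 f^2(0,s_i)$ over all bearings by time $t$. The conditional intensity of a bearing by individual $y$ at age $s-\tau_y$ is $b_{A_s}(s-\tau_y)$ and, given that a bearing occurs, the litter has conditional second moment $\breve v_{A_s}(s-\tau_y)$; hence the predictable compensator of $\sum N_i^2\mathbf{1}_{\cdot\le t}$ for mother $y$ is $\int_0^t b_{A_s}(s-\tau_y)\breve v_{A_s}(s-\tau_y)\mathbf{1}_{\tau_y\le s<\sigma_y}\,ds$. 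Summing over $y$ and multiplying by $f^2(0,s)$ yields $\langle M_{\breve B,f}\rangle_t=\int_0^t f^2(0,s)(b_{A_s}\breve v_{A_s},A_s)\,ds$. The argument for $\langle M_{\widehat B,f}\rangle_t$ is identical, with $b,\breve v$ replaced by $h,\widehat v$, noting that the splitting epoch coincides with $\sigma_y$ and the litter intensity is $h_{A_s}$.

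The only genuine subtlety — the \emph{point} of the computation — is the appearance of the second moments $\breve v$ and $\widehat v$ rather than $\breve m^2$ or $\breve m$ in the quadratic variations: this reflects that the jump of $\breve B^f$ is (litter size)$\times f(0,s)$, so squaring produces (litter size)$^2$, and taking the predictable projection yields the second moment of the litter distribution. Everything else is a routine invocation of the compensator calculation from the previous proposition applied to the square of the test function.
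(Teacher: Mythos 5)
Your proposal is correct; note that the paper itself omits the proof of this proposition as ``standard,'' so there is no in-text argument to compare against. Your route --- martingality from the compensators identified in the preceding proposition, then $\langle M\rangle_t$ as the predictable compensator of $[M]_t=\sum_{s\le t}(\Delta M_s)^2$ computed jump by jump, with the litter-size squares producing the second moments $\breve v$ and $\widehat v$ --- is exactly the standard argument the authors have in mind, and it is consistent with the jump computations they do carry out explicitly in the following corollary when evaluating $\big<M_{D,f},M_{\widehat B,f}\big>_t$.
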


We combine the rates $n= b\widecheck m+h\widehat m$ and $w= b\widecheck v+h\widehat v$, and
also the martingales. From the basic equation \eqref{Fund1} we obtain the following semimartingale decomposition, with proof in Section \ref{S:SemiMGProofs}.

\begin{proposition} \label{P:SemiMGt}
For $t\in\mathbb{T}$ and $f\in C^{1,1}(\mathbb{T}^*\times\mathbb{T})$,
\begin{equation} \label{E:BasicEqnM2}
(f_t,A_t) = (f_0,A_0) + \int_0^t \big( L_{A_s}f_s ,A_s \big) ds
+ M^f_t,
\end{equation}
where
\begin{equation*}
L_{A}f(x,s) = \partial_1f(x,s) + \partial_2f(x,s) - f(x,s)h_{A_s} + f(0,s)n_{A_s}
\end{equation*}
and $M_t^f$ is a locally-square-integrable martingale with predictable quadratic variation
\begin{equation}\label{QVMtf2}
\big<M^f\big>_t = \int_0^t \big( f^2_s(0) w_{A_s} + h_{A_s}f^2_s - 2f_s(0)h_{A_s}\widehat m_{A_s}f_s, A_s \big) ds.
\end{equation}
\end{proposition}

\begin{remark}
The predictable quadratic covariation of the martingale with two test functions can also be obtained.
For $f,g \in C(\mathbb{T}^*\times\mathbb{T})$ and $t\in \mathbb{T}$,
$$\big<M^f,M^g\big>_t = \int_0^t \Big( f_s(0)g_s(0) w_{A_s} + h_{A_s}f_sg_s - h_{A_s}\widehat m_{A_s} \big(f_s(0)g_s+g_s(0)f_s\big), A_s \Big) ds.$$
\end{remark}

In particular, taking $f$ as a function of the first variable $x$ only, we recover Equation (2.6) of \cite{JagKle00}, stated again here for completeness.

\begin{corollary}
For $t\in\mathbb{T}$ and $f \in C^1(\mathbb{T}^*)$,
\begin{equation}\label{E:Mtf}
(f,A_t) = (f,A_0) + \int_0^t (L_{A_s}f, A_s) ds + M_t^{f},
\end{equation}
where
\begin{equation*}
L_{A}f = f' - h_{A}f + f(0)n_{A}
\end{equation*}
and $M_t^f$ is a locally-square-integrable martingale with predictable quadratic variation
\begin{equation}\label{QVMf}
\big<M^f\big>_t = \int_0^t \big( f^2(0) w_{A_s} + h_{A_s}f^2 - 2f(0)h_{A_s}\widehat m_{A_s}f, A_s \big) ds.
\end{equation}
\end{corollary}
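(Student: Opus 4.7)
The plan is to deduce this corollary directly from the previous one (equation \eqref{E:BasicEqnM2}) by specializing the two-variable test function. Given $f \in C^1(\mathbb{T}^*)$, define $\tilde f \in C^{1,1}(\mathbb{T}^*\times\mathbb{T})$ by $\tilde f(x,s) := f(x)$. This is admissible since $f$ being $C^1$ in $x$ and constant in $s$ means $\tilde f$ is continuously differentiable in both arguments, with $\partial_1 \tilde f(x,s) = f'(x)$ and $\partial_2 \tilde f(x,s) = 0$.

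Substituting $\tilde f$ into \eqref{E:BasicEqnM2}, the left and initial terms yield $(f, A_t)$ and $(f, A_0)$ respectively, since $\tilde f(\cdot,t) = f$ and $\tilde f(\cdot,0) = f$. The drift integrand reduces to
\[
\big(f' + 0 - f\, h_{A_s} + f(0)\, n_{A_s},\, A_s\big) = (L_{A_s} f,\, A_s),
\]
which recovers the operator $L_A f = f' - h_A f + f(0) n_A$. The martingale term $M^{\tilde f}_t$ is renamed $M^f_t$; being locally-square-integrable is inherited from the previous corollary. For the quadratic variation, plug $\tilde f(0,s) = f(0)$ and $\tilde f(\cdot,s) = f$ into \eqref{QVMtf2} to obtain
\[
\langle M^f \rangle_t = \int_0^t \big( f^2(0) w_{A_s} + h_{A_s} f^2 - 2 f(0) h_{A_s} \widehat m_{A_s} f,\, A_s \big) ds,
\]
which is \eqref{QVMf}.

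There is essentially no obstacle here: the corollary is a direct specialization, and all the work has been done in the two-variable corollary and the preceding propositions identifying compensators and quadratic variations. The only thing worth a brief remark is that the $s$-derivative term vanishes because $\tilde f$ has no time dependence, which is precisely why the one-variable generator $L_A$ contains no $\partial_s$ piece. One could, alternatively, redo the argument from scratch: apply Theorem \ref{T:BasicEqnInfty} to $\tilde f(x,s) = f(x)$ to get the basic equation with $B$ and $D$, then subtract the compensators from the previous proposition to form $M_{\breve B,f} + M_{\widehat B,f} - M_{D,f}$, and finally compute the predictable quadratic variation using the same cross-variation argument (only $M_{D,f}$ and $M_{\widehat B,f}$ jump together, at splittings). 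But invoking \eqref{E:BasicEqnM2} is more economical and avoids repetition.
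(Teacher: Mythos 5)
Your specialization $\tilde f(x,s):=f(x)$, with $\partial_2\tilde f=0$ killing the time-derivative term in \eqref{E:BasicEqnM2} and \eqref{QVMtf2} reducing directly to \eqref{QVMf}, is exactly how the paper obtains this corollary — it is stated there without proof as the case of a test function depending on the first variable only. Your proof is correct and matches the intended argument.
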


\section{A Central Limit Theorem} \label{S:Results}
We now look at the case of a branching process dependent on some (large) index $K$;
$K$ may, for example, represent the population carrying capacity,
a threshold below which the process is supercritical and above which it is subcritical.
The notion of carrying capacity plays a great role in biological population dynamics.
The interest is to approximate such a process for large $K$.
This leads to consider a family of branching processes indexed by $K$.
All objects introduced in the previous sections will now carry the extra label $K$:
$A^K$, $b^K$, $h^K$ etc.
The qualifiers $\widehat\ $ and $\widecheck\ $ (of $m$ and $v$) will be dropped in any statement that refers to either qualifier.

Throughout the remainder of the paper, we make one simplifying (and reasonable) assumption in that the ages of all individuals in all starting generations
are bounded. We denote (with a slight abuse of notation) by $a^*$ ``the age of the oldest individual'' at $t=0$:
$$a^* := \sup_{K\ge1}\big( \inf\{x>0: A_0^K ((x,\infty))=0\} \big) <\infty.$$
As before, $\mathbb{T}^*=[0,T^*]$ with $T^*=T+a^*$.
For each $K$, $A^K = (A^K_t)_{t\in\mathbb{T}}$ is a c\`adl\`ag positive measure-valued process on $\mathbb{T}^*$,
i.e. $(A^K_t)_{t\in\mathbb{T}} \in \mathbb{D}(\mathbb{T},\mathcal{M}_+(\mathbb{T}^*))$.
Without loss of generality, we assume that $A^K_0$ is deterministic.

As we shall focus on situations where $\bar A^K := A^K/K$ converges to a non-degenerate limit, a new parametrisation of the intensities is needed,
one that involves $\bar A^K$ rather than $A^K$ itself.

We have, immediately from Equation \eqref{E:Mtf}, the following evolution of $\bar{A}^K$:
\begin{equation}\label{E:AbarK}
(f,\bar A^K_t) = (f,\bar A^K_0) + \int_0^t (L^K_{\bar A^K_s}f, \bar A^K_s) ds + \frac{1}{{K}} M_t^{f,K},
\end{equation}
where
\begin{equation}\label{OpLAK}
L^K_{A}f = f' - h^K_{A}f + f(0)n^K_{A}
\end{equation}
and $M_t^{f,K}$ is a martingale. A similar representation with functions of two variables is also used later  in proofs.

\subsection{The Law of Large Numbers}  The LLN was    established in \cite{HamJagKle13} under
  the  following  conditions, referred to as  {\it smooth demography}:
\begin{enumerate}
\item[(C0)]
The model parameters $b$, $h$, $m$ and $v$ are uniformly bounded,
that is, $\sup_{K,A,x} b^K_A(x) <\infty$, et cetera.
Note that the supremum with respect to $A$ is taken over $A\in \mathcal{M}_+(\T^*)$.
\item[(C1)]
The model parameters $b$, $h$ and $m$ are normed uniformly Lipschitz in the following
sense: there is a $c>0$ such that for all $K\ge1$,
$||b^K_A-b^K_B||_\infty \le c||A-B||$,
where $||\mu|| := \sup_{||f||_\infty\le1, f \textnormal{ continuous}} |(f, \mu)|$;
the same applies to $h$ and $m$.
\item[(C2)] The limit (pointwise in $A$ and uniform in $x$) $\lim b^K_{A}=:b^\infty_{A}$ exists;
the same applies to limits $\lim h^K_{A}=:h^\infty_{A}$ and $\lim m^K_{A}=:m^\infty_{A}$.
\item[(C3)] $\bar A^K_0 \Rightarrow \bar A_0^\infty$, $\sup_K(1,\bar A_0^K)<\infty$.
\end{enumerate}

We remark that in \cite{HamJagKle13}, the Prokhorov metric is used for (C1).
However, since we shall work in spaces $C^{-j}$ and $W^{-j}$ (see Section \ref{S:Spaces}) for the CLT,
it is more natural to use the norms in these spaces.
In our context, the norm $||\cdot||$ coincides with $||\cdot||_{C^{-0}}$ defined in Section \ref{S:Spaces}.
It can be shown that the LLN remains valid with this (C1).

\begin{theorem} [\cite{HamJagKle13}]
Under the smooth demography condition, as $K\to\infty$, $\bar{A}^K$ converges weakly
in the Skorokhod space $\mathbb{D}(\mathbb{T}, \mathcal{M}_+(\mathbb{T}^*))$
to the limiting process $\bar{A}$, which is deterministic and satisfies, for $f\in C^1(\mathbb{T}^*)$ and $t\in\mathbb{T}$,
\begin{equation} \label{E:LLN}
(f, \bar{A}_t)
= (f, \bar{A}_0) + \int_0^t (L^\infty_{\bar A_s}f, \bar A_s)ds,
\end{equation}
where $L^\infty_{A}f = f' - h^\infty_{A}f + f(0)n^\infty_{A}$
and $n^\infty_A = b^\infty_A \widecheck m^\infty_A + h^\infty_A \widehat m^\infty_A$.
\end{theorem}
It follows by the Monotone Class Theorem (e.g.  \cite[I.22.1]{DelMey78}) that \eqref{E:LLN}   also  holds for
test functions of two variables, $f\in C^{1,1}(\mathbb{T}^*\times\mathbb{T})$, and $t\in\mathbb{T}$.
This fact will be used later in the representation of the fluctuation process.

As remarked in \cite{HamJagKle13},
if $\bar A_0$ has a density, then $\bar A_t$ has a density; call it $a(x,t)$. In such case,
Equation \eqref{E:LLN} is the weak form of the McKendrick-von Foerster equation for the density:
$$\Big(\frac{\partial}{\partial x}+\frac{\partial}{\partial t}\Big)a(x,t) = -a(x,t)h^\infty_{\bar A_t}(x),
\q a(0,t) = \int_0^{t+a^*}n^\infty_{\bar A_t}(x)a(x,t)dx.$$

\subsection{The fluctuation process $Z^K$}

For each $t$ and $K$, $Z_t^K := \sqrt{K} (\bar{A}_t^K - \bar{A}_t)$
is a finite signed measure that, in view of \eqref{E:AbarK} and \eqref{E:LLN}, can be represented as
\begin{equation}\label{E:fZK}
(f,Z_t^K) = (f,Z_0^K)
+ \sqrt{K} \int_0^t \left( L^K_{\bar A^K_s}f - L^\infty_{\bar A_s}f , \bar{A}_s \right) ds
+ \int_0^t \left( L^K_{\bar A^K_s}f , Z^K_s \right) ds + \tilde{M}_t^{f,K},
\end{equation}
where $\tilde{M}_t^{f,K} = M_t^{f,K}/\sqrt{K}$ is a martingale with predictable quadratic variation
\begin{equation*}
\big<\tilde M^{f,K}\big>_t = \int_0^t \big( f^2(0) w^K_{\bar A^K_s} + h^K_{\bar A^K_s}f^2 - 2f(0)h^K_{\bar A^K_s}\widehat m^K_{\bar A^K_s}f, \bar A^K_s \big) ds.
\end{equation*}

\subsection{Relevant spaces and embeddings} \label{S:Spaces}
Let $C^j(\mathbb{T}^*)$, $j\in\N_0$, denote the space of continuous functions on $\mathbb{T}^*$ with continuous derivatives up to order $j$.
Since $\mathbb{T}^*$ is a bounded domain, the functions in $C^j(\mathbb{T}^*)$ as well as their $j$ derivatives are bounded with the norm
$$||f||_{C^j(\mathbb{T}^*)} = \max_{0\le i\le j}\; \sup_{x\in \mathbb{T}^*} |f^{(i)}(x)|.$$
The  Sobolev space $W^j(\mathbb{T}^*)$ is the closure of $C^\infty(\mathbb{T}^*)$ with respect to the norm
$$||f||_{W^j(\mathbb{T}^*)} = \bigg(\sum_{i=0}^j \int_{\mathbb{T}^*} \big(f^{(i)}(x)\big)^2 dx \bigg)^{1/2},$$
where $f^{(i)}$ is the (weak) derivative of $f$ (see e.g. \cite{AdaFou03}).
The space $W^j(\mathbb{T}^*)$ is a Hilbert space with inner product
 $\left<f,g\right>_{W^j(\mathbb{T}^*)} = \sum_{i=0}^j \int_{\mathbb{T}^*} f^{(i)}(x) g^{(i)}(x) dx.$

For the rest of this paper, we assume, unless otherwise specified, that functions are defined on the domain $\mathbb{T}^*$
and suppress the label $\mathbb{T}^*$; e.g. $W^j$ means $W^j(\mathbb{T}^*)$.

The following embeddings hold:
$$C^j \hookrightarrow W^j \ ,\ \
W^{j+1} \hookrightarrow C^j \q \textnormal{and} \q
W^{j+1} \underset{H.S.}{\hookrightarrow} W^j,$$
where H.S. stands for Hilbert-Schmidt embedding.
Let $C^{-j}(\mathbb{T}^*)$ and $W^{-j}(\mathbb{T}^*)$ denote
the dual spaces of, respectively $C^{j}(\mathbb{T}^*)$ and $W^{j}(\mathbb{T}^*)$.
Then,
$$W^{-j} \hookrightarrow C^{-j} \ ,\ \
C^{-j} \hookrightarrow W^{-(j+1)} \q \textnormal{and} \q
W^{-j} \underset{H.S.}{\hookrightarrow} W^{-(j+1)}.$$
In particular, we have
$$C^{-0} \hookrightarrow C^{-1} \hookrightarrow W^{-2} \underset{H.S.}{\hookrightarrow} W^{-3} \underset{H.S.}{\hookrightarrow} W^{-4}.$$

As a signed measure, $Z^K_t$ belongs to $C^{-0}$ for each $t$ and $K$.
To make use of representation \eqref{E:fZK}, we consider the process $Z^K$ as a process taking values in $C^{-1}$.
The technicality in establishing Aldous' tightness condition ((B) of Lemma \ref{L:AldousReb}) requires the embedding
$C^{-1} \hookrightarrow W^{-2} \hookrightarrow W^{-3} \hookrightarrow W^{-4}$.
In particular, with $C^{-1} \hookrightarrow W^{-2}$, the boundedness of $\E[||Z^K_t||_{W^{-2}}]$ is obtained (Proposition \ref{P:ZW-2Bound}), which is used to obtain the boundedness of $\E[ \sup_{t\le T} ||Z^K_t||_{W^{-3}}]$ (Proposition \ref{P:ZW-3Bound}), which is in turn used to establish the Aldous tightness criterion of $Z^K$ in $\mathbb{D}(\mathbb{T},W^{-4})$ (Proposition \ref{P:CheckT2}).
The Hilbert-Schmidt embedding $W^{-2} \underset{H.S.}{\hookrightarrow} W^{-4}$ is used to identify a compact set in order to establish coordinate tightness ((A) of Lemma \ref{L:AldousReb}).


We shall use the following general results, the proofs of which are standard and therefore omitted.
For any $f\in C^j$ and $g\in W^j$, $j\in \N_0$,
\begin{equation} \label{E:fgNorm}
||fg||_{W^j} \le c ||f||_{C^j} ||g||_{W^j}.
\end{equation}
Let $(p^j_l)_{l\ge1}$ denote a complete orthonormal basis of $W^j$,   $j\in \N$.
Then, for any $x_1\in\mathbb{T}^*$ and $x_2\in\mathbb{T}^*$,
\begin{equation} \label{E:CONB}
\Big|\sum_{l\ge1} p^j_l(x_1)p^j_l(x_2)\Big| \le c.
\end{equation}

\subsection{Statement of the Central Limit Theorem} \label{S:CLT}
Further to (C0)-(C3), we shall make the following assumptions.

\begin{enumerate}
\item[(A0)]
Conditions (C1) and (C2) hold also for $v$.
\item[(A1)]
$\Xi:= \sup_{x,A,K} \widecheck \xi^K_{A}(x) \vee \sup_{x,A,K} \widehat \xi^K_{A}(x)$ is in $L^2$.
\item[(A2)]
The reproduction parameters $b_A^K(x)$, $h^K_A(x)$ and $m^K_A(x)$ and their limits (in the sense of (C2)) are in $C^4$,
in the argument $x$, with convergence in $C^4$.
Moreover,
$\sqrt K \sup_A ||b^K_A-b^\infty_{A}||_\infty \rightarrow 0$ as $K\to\infty$,
$\sup_{K,A} ||b^K_A||_{C^3} <\infty$ and $\sup_A ||b^\infty_A||_{C^4} <\infty$;
similarly for parameters $h$ and $m$.
\item[(A3)]
The limiting parameters (as functions of $A$) are Fr\'echet differentiable at every $A$.
Namely, for every $A_0$,
there exists a continuous linear operator $\partial_A b^\infty_{A_0} : W^{-4} \to L_{\infty}$ such that
$$\lim_{||B||_{W^{-4}}\to 0} \frac{1}{||B||_{W^{-4}}} ||b^\infty_{A_0+B}-b^\infty_{A_0}-\partial_A b^\infty_{A_0}(B)||_\infty=0.$$
Moreover, $\sup_{A_0} ||\partial_A b^\infty_{A_0}||_{\mathbb{L}^{-4}} \le c$,
where $\mathbb{L}^{-4} = L(W^{-4},L_\infty)$ denotes  the space of continuous linear mappings from $W^{-4}$ to $L_\infty$.
The same applies to parameters $h^\infty$ and $m^\infty$.
\item[(A4)]
$Z^K_0$ converges to $Z^\infty_0$ in $W^{-4}$
and $\sup_K ||Z^K_0||_{W^{-2}} <\infty$.
\end{enumerate}

\begin{theorem} \label{T:CLT}
Assume  (A0)--(A4) in addition to the smooth demography condition (C0)--(C3).
Then, as $K\to \infty$, the process $(Z^K_t)_{t\in\mathbb{T}}$ converges weakly in $\mathbb{D}(\mathbb{T},W^{-4})$
to the process $(Z_t)_{t\in\mathbb{T}}$ that satisfies the equation, for $f \in W^4$,
\begin{multline} \label{E:fZInfty}
(f,Z_t) = (f,Z_0) +
\int_0^t (-\partial_A h^\infty_{\bar{A}_s}(Z_s) f + f(0)\partial_A n^\infty_{\bar{A}_s}(Z_s), \bar{A}_s) ds \\
+ \int_0^t (f' - h^\infty_{\bar{A}_s}f + f(0)n^\infty_{\bar{A}_s}, Z_s) ds + \tilde{M}^{f,\infty}_t
\end{multline}
where $n^\infty_A = b^\infty_A \widecheck m^\infty_A + h^\infty_A \widehat m^\infty_A$
and $\tilde{M}_t^{f,\infty}$ is a continuous Gaussian martingale with predictable quadratic variation
$$\big<\tilde{M}^{f,\infty}\big>_t = \int_0^t \left(f^2(0) w_{\bar{A}_s}^\infty + h_{\bar{A}_s}^\infty f^2 - 2f(0) h_{\bar{A}_s}^\infty \widehat m_{\bar{A}_s}^\infty f , \bar{A}_s\right) ds,$$
with $w_A^\infty = b_A^\infty \widecheck v_A^\infty + h_A^\infty \widehat v_A^\infty$.
\end{theorem}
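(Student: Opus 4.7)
The plan is to follow the standard three-step recipe for a functional CLT in an infinite-dimensional dual space: (i) establish tightness of $(Z^K)$ in $\mathbb{D}(\mathbb{T}, W^{-4})$; (ii) pass to the limit in the semimartingale decomposition \eqref{E:fZK} to show every accumulation point satisfies \eqref{E:fZInfty}; and (iii) prove uniqueness of solutions to \eqref{E:fZInfty}. The embedding chain $C^{-0} \hookrightarrow W^{-2} \underset{H.S.}{\hookrightarrow} W^{-3} \underset{H.S.}{\hookrightarrow} W^{-4}$ is the scaffolding: $Z^K_t$, being a signed measure, lies in $C^{-0}$ and will be controlled in the stronger $W^{-3}$-norm, while compactness of $W^{-3}$-balls inside $W^{-4}$ (Hilbert-Schmidt) supplies the compact containment needed for tightness.

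For tightness I would use the Aldous-Rebolledo criterion. Marginal compact containment reduces to the uniform moment bound
\begin{equation*}
\sup_{K\ge 1} \E\Big[ \sup_{t\le T} ||Z^K_t||_{W^{-3}}^2 \Big] < \infty,
\end{equation*}
derived by writing $||Z^K_t||_{W^{-3}}^2 = \sum_{l\ge 1} (p^3_l, Z^K_t)^2$, applying \eqref{E:fZK} with $f = p^3_l$ to each term, and estimating componentwise: the drift is handled via (C1), (A2) and the $C^3$-bound on the basis, the martingale via Doob's inequality and the explicit quadratic variation, the sum over $l$ being closed by \eqref{E:CONB}; the initial condition uses the $W^{-2}$-bound in (A4). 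The oscillation condition of Aldous-Rebolledo is verified by similar bounds applied on $[\tau,\tau+\delta]$, now in $W^{-4}$-norm, using (C0), (A0) and the Hilbert-Schmidt embedding $W^{-3}\hookrightarrow W^{-4}$ to turn the componentwise moment bounds into a bound on the norm of the drift integral and of the increment of $\langle \tilde M^{\cdot,K}\rangle$.

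To identify the limit I pass to the limit in \eqref{E:fZK}; the crux is the term $\sqrt K ( L^K_{\bar A^K_s}f - L^\infty_{\bar A^\infty_s}f, \bar A^\infty_s )$, which I split as
\begin{equation*}
\sqrt K \bigl( (L^K_{\bar A^K_s} - L^\infty_{\bar A^K_s})f, \bar A^\infty_s \bigr) + \sqrt K \bigl( (L^\infty_{\bar A^K_s} - L^\infty_{\bar A^\infty_s})f, \bar A^\infty_s \bigr).
\end{equation*}
The first summand vanishes uniformly thanks to $\sqrt K \sup_A ||b^K_A - b^\infty_A||_\infty \to 0$ (and its analogues for $h,m$) in (A2). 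The second is the main obstacle and is precisely why (A3) was imposed: writing $\bar A^K_s - \bar A^\infty_s = Z^K_s/\sqrt K$ and invoking the LLN so that $||Z^K_s/\sqrt K||_{W^{-4}} \to 0$, Fr\'echet differentiability expands each $L^\infty$-increment as $\partial_A L^\infty_{\bar A^\infty_s}(Z^K_s)(f)/\sqrt K$ plus $o(1/\sqrt K)$ in $L_\infty$, and multiplication by $\sqrt K$ produces exactly the Fr\'echet-derivative integrand in \eqref{E:fZInfty}. Convergence of $\tilde M^{f,K}$ to a continuous Gaussian martingale with the stated quadratic variation follows from Rebolledo's theorem: the predictable brackets converge by (A0) combined with the LLN, and each jump of $\tilde M^{f,K}$ is of order $1/\sqrt K$, hence uniformly negligible. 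Uniqueness of \eqref{E:fZInfty} is a linear problem: subtracting two solutions, testing against the basis $(p^4_l)$ and using the uniform bound $\sup_{A_0} ||\partial_A b^\infty_{A_0}||_{\mathbb{L}^{-4}} \le c$ from (A3) together with boundedness of $L^\infty$, a Gronwall argument on $||Z_t - \tilde Z_t||_{W^{-4}}^2$ closes the loop and pins down the full weak limit.
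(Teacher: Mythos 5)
Your overall architecture (Aldous--Rebolledo tightness in $W^{-4}$ via a stronger-norm moment bound, limit identification through the Fr\'echet expansion of $\sqrt K(L^K_{\bar A^K}-L^\infty_{\bar A^\infty})$, Gronwall for uniqueness) matches the paper, and your splitting of the critical drift term into the $K$-versus-$\infty$ part and the Fr\'echet part is exactly the one used in Proposition \ref{P:phiZInfty}. But there is a genuine gap at the heart of both your moment bound and your uniqueness step: the operator $L^K_A f = f' - h^K_A f + f(0)n^K_A$ contains the unbounded derivative $f\mapsto f'$ and only maps $W^j$ into $W^{j-1}$. If you test \eqref{E:fZK} against $f=p^3_l$ as you propose, the term $(L^K_{\bar A^K_s}p^3_l, Z^K_s)$ can only be estimated by $\|L^K_{\bar A^K_s}p^3_l\|_{W^2}\,\|Z^K_s\|_{W^{-2}}$, so your Gronwall inequality for $\E[\|Z^K_t\|^2_{W^{-3}}]$ has the \emph{stronger} norm $\|Z^K_s\|_{W^{-2}}$ on the right-hand side and does not close. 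The same loss of one derivative defeats your uniqueness argument: $(f',Z_s-Y_s)$ with $f\in W^4$ is controlled by $\|Z_s-Y_s\|_{W^{-3}}$, not by $\|Z_s-Y_s\|_{W^{-4}}$, and $L^\infty$ is \emph{not} bounded on $W^4$, so a Gronwall loop on $\|Z_t-\tilde Z_t\|_{W^{-4}}$ run directly on \eqref{E:fZInfty} cannot be closed.

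The paper's essential device, absent from your proposal, is the mild (shifted) representation: extending \eqref{E:fZK} to test functions of two variables and taking $f(x,s)=\varTheta_{t-s}\phi(x)=\phi(x+t-s)$, so that the term $\partial_1 f+\partial_2 f$ vanishes and one obtains \eqref{E:phiZK}, in which no derivative acts on $\phi$. This requires Lemma \ref{L:ShiftExt} (a norm-controlled extension of $\varTheta_{t-s}\phi$ back to all of $\mathbb{T}^*$) and the construction of the martingale measure $M_t$ so that $\int_0^t(\varTheta_{t-s}\phi,d\tilde M^K_s)$ is meaningful. With this in hand, the $W^{-2}$ bound of Proposition \ref{P:ZW-2Bound} closes by Gronwall, the $W^{-3}$ bound of Proposition \ref{P:ZW-3Bound} then follows by a one-step bootstrap from the \emph{direct} representation (this is where your componentwise estimate becomes legitimate, because $\|Z^K_s\|_{W^{-2}}$ is already under control), and uniqueness is run on the shifted equation \eqref{E:phiZInfty}, with a final passage back to \eqref{E:fZInfty}. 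Without the shift, or some equivalent mechanism for absorbing the transport term, your proof does not go through.
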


\begin{corollary}[SPDE] \label{C:SPDE}
The limiting process $(Z_t)_{t\in\mathbb{T}}$ satisfies the following SPDE:
\begin{multline*}
dZ_t(dx) = -\partial_Ah^\infty_{\bar A_t}(Z_t)(x) \bar A_t(dx) dt + (\partial_An^\infty_{\bar A_t}(Z_t), \bar A_t) dt \delta_0(dx) \\
-(Z_t)'(dx)dt - h^\infty_{\bar A_t}(x) Z_t(dx) dt + (n^\infty_{\bar A_t}, Z_t) dt\delta_0(dx) + d\tilde{M}^{\infty}_t(dx),
\end{multline*}
where $\tilde{M}^\infty$ is a Gaussian martingale measure such that
$(f,\tilde{M}^\infty_t) = \tilde{M}^{f,\infty}_t$, and
  $(Z^\infty_t)'$ is defined by $(f,(Z^\infty_t)')=-(f',Z^\infty_t)$.
\end{corollary}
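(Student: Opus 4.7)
The plan is to show that the proposed SPDE is just the distributional/measure-valued restatement of the weak evolution equation \eqref{E:fZInfty} established in Theorem \ref{T:CLT}. Since \eqref{E:fZInfty} holds for every test function $f\in W^4$, and $W^4$ is dense in the predual of $W^{-4}$, it suffices to pair both sides of the claimed SPDE with an arbitrary $f\in W^4$ and verify that each term on the right-hand side matches the corresponding term in \eqref{E:fZInfty}.

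The first step is to make sense of the objects appearing in the SPDE. The derivative $(Z_t)'$ is defined by duality as $(f,(Z_t)') := -(f',Z_t)$, which is a bounded linear functional on $W^4$ (hence an element of $W^{-4}$) because $f\mapsto f'$ is continuous from $W^4$ into $W^3$ and $Z_t \in W^{-3} \hookrightarrow W^{-4}$ by Theorem \ref{T:CLT}. The martingale measure $\tilde M^\infty$ is defined as the unique $W^{-4}$-valued Gaussian martingale such that $(f,\tilde M^\infty_t)=\tilde M^{f,\infty}_t$ for every $f\in W^4$; its existence and uniqueness follow from the bilinear form $\langle \tilde M^{f,\infty},\tilde M^{g,\infty}\rangle_t$ (obtained by polarisation from \eqref{E:fZInfty}) being, for each $t$, a bounded symmetric positive bilinear form on $W^4\times W^4$, together with the standard construction of Hilbert-space-valued orthogonal martingale measures.

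The second step is a term-by-term identification. Applying $f\in W^4$ to the SPDE and integrating over $[0,t]$ yields six contributions. The two Fr\'echet-derivative drift terms give
\begin{align*}
\bigl(f,-\partial_A h^\infty_{\bar A^\infty_s}(Z_s)\bar A^\infty_s\bigr) &= \bigl(-\partial_A h^\infty_{\bar A^\infty_s}(Z_s)\,f,\bar A^\infty_s\bigr),\\
\bigl(f,(\partial_A n^\infty_{\bar A^\infty_s}(Z_s),\bar A^\infty_s)\delta_0\bigr) &= f(0)\bigl(\partial_A n^\infty_{\bar A^\infty_s}(Z_s),\bar A^\infty_s\bigr),
\end{align*}
matching the first integral in \eqref{E:fZInfty}. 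The transport term gives $-(f,(Z_s)')=(f',Z_s)$ by the definition above; the damping term gives $(f,-h^\infty_{\bar A^\infty_s}Z_s)=(-h^\infty_{\bar A^\infty_s}f,Z_s)$; and the boundary birth term gives $(f,(n^\infty_{\bar A^\infty_s},Z_s)\delta_0)=f(0)(n^\infty_{\bar A^\infty_s},Z_s)$. Together these three contributions rebuild the second integrand $(f'-h^\infty_{\bar A^\infty_s}f+f(0)n^\infty_{\bar A^\infty_s},Z_s)$ in \eqref{E:fZInfty}. The martingale term reproduces $\tilde M^{f,\infty}_t$ by construction of $\tilde M^\infty$.

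The principal technical point, and the only one requiring care, is the construction of the martingale measure $\tilde M^\infty$ as a genuine $W^{-4}$-valued process rather than just a family of real martingales $\{\tilde M^{f,\infty}\}_{f\in W^4}$. This is where the Hilbert--Schmidt embedding $W^{-3}\hookrightarrow W^{-4}$ from Subsection \ref{S:Spaces} is essential: it guarantees, via the standard theory of Hilbert-space-valued square-integrable martingales, that the bilinear covariance functional on $W^4\times W^4$ lifts to a $W^{-4}$-valued orthogonal martingale measure with the prescribed covariation structure. Once this construction is in hand, the equivalence between the SPDE and \eqref{E:fZInfty} is immediate, completing the proof.
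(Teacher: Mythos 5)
Your term-by-term duality identification is exactly what the paper means by ``the SPDE representation follows by direct calculation,'' and that part of your argument is fine. However, you have misidentified where the actual content of the corollary lies, and the one substantive claim is left with a gap. The existence of $\tilde M^\infty$ as a $W^{-4}$-valued process with $(f,\tilde M^\infty_t)=\tilde M^{f,\infty}_t$ is not something you need to construct abstractly: it is already supplied by Proposition \ref{P:MInfty} as the weak limit of the $\tilde M^K$. What the corollary adds, and what your proof does not establish, is that this \emph{specific} limit is a \emph{Gaussian martingale measure}, i.e.\ that the family $\{\tilde M^{f,\infty}\}_{f\in W^4}$ is \emph{jointly} Gaussian. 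Theorem \ref{T:CLT} only gives that each $\tilde M^{f,\infty}$ is Gaussian individually (a continuous martingale with deterministic bracket). Your appeal to ``the standard construction of Hilbert-space-valued orthogonal martingale measures'' manufactures \emph{some} Gaussian object with the prescribed covariance, but it does not identify that object with the limit of $\tilde M^K$; asserting that $\tilde M^\infty$ is ``the unique $W^{-4}$-valued Gaussian martingale such that $(f,\tilde M^\infty_t)=\tilde M^{f,\infty}_t$'' presupposes the Gaussianity you are supposed to prove.

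The paper closes this gap with a one-line Cram\'er--Wold argument that your proposal omits: for $f_1,\dots,f_n\in W^4$ and scalars $\alpha_1,\dots,\alpha_n$, linearity of $f\mapsto \tilde M^{f,\infty}$ gives $\alpha_1\tilde M^{f_1,\infty}_t+\cdots+\alpha_n\tilde M^{f_n,\infty}_t=\tilde M^{(\alpha_1f_1+\cdots+\alpha_nf_n),\infty}_t$, which is again a continuous martingale with deterministic quadratic variation and hence Gaussian; by Cram\'er--Wold the vector $(\tilde M^{f_1,\infty}_t,\dots,\tilde M^{f_n,\infty}_t)$ is Gaussian, so $\tilde M^\infty$ is a Gaussian martingale measure. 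Adding this step (and dropping the unnecessary abstract construction) would make your proof complete and essentially equivalent to the paper's.
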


\begin{proposition} \label{P:EZtMeasure}
Suppose that $\partial_Ah^\infty_{A_0}(B)(x)$ has the form $\int g^h(A_0,x,y) B(dy)$ for some $g^h(A,x,\cdot) \in W^4$ with $\sup_{A,x} ||g^h(A,x,\cdot)||_{W^4} <\infty$,
and similarly $\partial_An^\infty_{A_0}(B)(x)$ is of the form $\int g^n(A_0,x,y) B(dy)$ for some $g^n(A,x,\cdot) \in W^4$ with $\sup_{A,x} ||g^n(A,x,\cdot)||_{W^4} <\infty$.
Then, $\nu_t:f\mapsto \E[(f,Z_t)]$ is a signed measure.
\end{proposition}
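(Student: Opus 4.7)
The plan is to show that $\nu_t$, initially a linear functional on $W^4$, extends continuously to $C(\mathbb{T}^*)$ equipped with the sup norm; by the Riesz representation theorem on the compact interval $\mathbb{T}^*$ this extension is automatically a finite signed Borel measure, which is what the proposition asserts.

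First, I would take expectations in \eqref{E:fZInfty}. Fubini together with the zero-mean property of $\tilde M^{f,\infty}$ gives
\[
\nu_t(f) = \nu_0(f) + \int_0^t \nu_s\big(L^\infty_{\bar A^\infty_s} f\big)\, ds + \int_0^t S_s(f)\, ds,
\]
where $L^\infty$ is the operator from \eqref{E:LLN} and $S_s(f)$ collects the two Fr\'echet-derivative terms. Under the structural hypothesis, Fubini in the inner pairing rewrites
\[
S_s(f) = -\int f(x)\, \nu_s\big(g^h(\bar A^\infty_s,x,\cdot)\big)\, \bar A^\infty_s(dx) + f(0) \int \nu_s\big(g^n(\bar A^\infty_s,x,\cdot)\big)\, \bar A^\infty_s(dx),
\]
i.e.\ as the action of $f$ against an explicit signed measure. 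Since $\sup_{A,x}||g^h(A,x,\cdot)||_{W^4}$ and $\sup_{A,x}||g^n(A,x,\cdot)||_{W^4}$ are finite and $\nu_s \in W^{-4}$, the kernels $x \mapsto \nu_s(g^{h}(\bar A^\infty_s,x,\cdot))$ and $x \mapsto \nu_s(g^{n}(\bar A^\infty_s,x,\cdot))$ are bounded uniformly in $x$, so $|S_s(f)| \le c\, ||\nu_s||_{W^{-4}}\, |\bar A^\infty_s|\, ||f||_\infty$.

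The transport term $\nu_s(f')$ cannot be bounded in terms of $||f||_\infty$ alone, so I would absorb it via a backward evolution: let $U_{s,t}$ denote the solution operator for $\partial_s \phi_s + L^\infty_{\bar A^\infty_s}\phi_s = 0$ on $[0,t]$ with terminal condition $\phi_t = f$. This is a first-order transport equation with local killing by $h^\infty$ and a non-local boundary source $\phi(0)n^\infty$; a characteristics argument along $u \mapsto x+(u-s)$ combined with a Gronwall bound on the trace $\phi_s(0)$ yields $||U_{s,t}f||_\infty \le c(T) ||f||_\infty$ uniformly in $s \in [0,t]$. Using $U_{s,t}f$ as a time-dependent test function in the integral equation for $\nu$ makes the $L^\infty$-terms cancel and produces the Duhamel formula
\[
\nu_t(f) = \nu_0\big(U_{0,t}f\big) + \int_0^t S_s\big(U_{s,t}f\big)\, ds.
\]

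Putting these together, and using that $\nu_0$ is a signed measure (inherited from the $Z^K_0$ being signed measures) so $|\nu_0(U_{0,t}f)| \le ||\nu_0||_{\mathrm{TV}} ||U_{0,t}f||_\infty$, one obtains $|\nu_t(f)| \le c(T) ||f||_\infty$ as soon as $\sup_{s \le T} ||\nu_s||_{W^{-4}} < \infty$. This last a priori bound is the one already supplied by the moment estimates used in the tightness part of Theorem \ref{T:CLT} (via the embedding $W^{-3}\hookrightarrow W^{-4}$). The Riesz representation theorem then identifies $\nu_t$ with a finite signed Borel measure on $\mathbb{T}^*$. The main obstacle in this programme is precisely the transport term $\nu_s(f')$, which prevents any direct sup-norm bound and forces the backward-semigroup/characteristics detour (and requires some care because of the non-local boundary source); a secondary subtlety is the self-referential appearance of $\nu_s$ inside $S_s$, which is handled not by a fixed-point argument but by the a priori $W^{-4}$-moment bound on $Z_s$ inherited from the CLT tightness estimates.
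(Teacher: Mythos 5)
Your argument is correct in substance and reaches the same two pillars as the paper's proof --- an a priori Gronwall bound on $\|\nu_s\|_{W^{-4}}$ to control the Fr\'echet-derivative source terms (which the structural hypothesis turns into pairings of $\nu_s$ against the fixed kernels $g^h,g^n\in W^4$), and a mild formulation to eliminate the unbounded transport term $\nu_s(f')$ --- but it takes a genuinely different route on the second point. The paper does not construct the full backward evolution operator $U_{s,t}$ for $L^\infty_{\bar A^\infty_s}$: it uses only the bare shift $\varTheta_{t-s}\phi(x)=\phi(x+t-s)$ (representation \eqref{E:phiZInfty}, with the extension of Lemma \ref{L:ShiftExt} controlling norms), which absorbs the derivative alone and leaves the $-h^\infty\varTheta_{t-s}\phi+\varTheta_{t-s}\phi(0)n^\infty$ terms explicitly in the integral equation \eqref{E:PhiEZt}; the measure property is then obtained by approximating $\phi\in C^0$ by smooth functions and passing to the limit in \eqref{E:PhiEZt} by dominated convergence. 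Your Duhamel formula $\nu_t(f)=\nu_0(U_{0,t}f)+\int_0^t S_s(U_{s,t}f)\,ds$ buys a cleaner endgame --- every term on the right is manifestly bounded by $\|f\|_\infty$, so the extension to $C^{-0}$ is immediate from Riesz --- at the price of having to establish well-posedness, regularity in $x$ sufficient to use $U_{s,t}f$ as a test function, and the sup-norm bound for the non-local backward transport equation (your characteristics-plus-Volterra/Gronwall argument for the trace $\phi_s(0)$ is the right mechanism and does work, since $n^\infty$ is bounded). Two small caveats, the first of which the paper shares: both arguments implicitly require $\nu_0=\E[Z_0]$ to act continuously on $(C^0,\|\cdot\|_\infty)$, which does not follow from (A4) alone; and you should say a word about why $s\mapsto U_{s,t}f$ is an admissible time-dependent test function (differentiability into the relevant space, and the possible loss of smoothness across the characteristic emanating from the origin), a point the paper sidesteps by working with the explicit shift.
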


The proofs of Corollary \ref{C:SPDE} and Proposition \ref{P:EZtMeasure} is postponed to Section \ref{S:ZInftyProofs}.


\section{Proofs of Propositions 1, 2 and 4} \label{S:SemiMGProofs}

\begin{proof} [Proof of Proposition \ref{P:BasicEqnInfty}]
Note that $(f_t,A_t) = \sum_{y\in I}f(t-\tau_y,t)\mathbf{1}_{\tau_y\le t<\sigma_y}$.
Let $g(t) = f(t-\tau_y,t)$, then
$g'(t) = \partial_1f(t-\tau_y,t) + \partial_2 f(t-\tau_y,t)$
and
$$g(t)\mathbf{1}_{\tau_y\le t<\sigma_y} - g(0)\mathbf{1}_{\tau_y<0} = \int_0^t g'(s)\mathbf{1}_{\tau_y\le s<\sigma_y}ds
+ g(\tau_y)\mathbf{1}_{0\le \tau_y\le t} - g(\sigma_y)\mathbf{1}_{\sigma_y\le t}.$$
Summing over $y$, we get \eqref{Fund1}.
\end{proof}

\begin{proof} [Proof of Proposition \ref{P:Compensators}]
Let $H^f_t = \int_0^t(f_sh_{A_s},A_s)ds$ and $Q^f(t) = \int_{\mathbb{T}^*\times[0,t]} f(x,s)D(dx,ds)$. 
By the very definition of death rate, and the convention that all rates vanish for negative arguments,
$\mathbf{1}_{\sigma_y\le t} - \int_0^{t\wedge \sigma_y} h_{A_s}(s-\tau_y) ds$
is a martingale.
That is, for any bounded function $g$,
$$\E\big[ g(\sigma_y) \mathbf{1}_{\sigma_y>t} | \mathcal{F}_t \big] = \int_t^\infty g(s) \E\big[ h_{A_s}(s-\tau_y)\mathbf{1}_{\sigma_y>s} | \mathcal{F}_t \big] ds,$$
where $\mathcal{F}=\{\mathcal{F}_t\}$ is the natural filtration of the age structure process $A$. Equivalently, 
$$\P(\sigma_y\in ds\cap(t,+\infty)|\mathcal{F}_t) = \mathbf{1}_{(t,+\infty)}(s)\E\big[ h_{A_s}(s-\tau_y)\mathbf{1}_{\sigma_y>s} | \mathcal{F}_t\big]ds.$$
In particular
$$\lim_{\delta\downarrow0}\frac1\delta\P(t<\sigma_y\le t+\delta|\mathcal{F}_t) = h_{A_t}(t-\tau_y)\mathbf{1}_{\sigma_y>t}.$$
Now,
$Q^f_t = \sum_{y\in I} \mathbf{1}_{\sigma_y\le t} f(\lambda_y, \sigma_y)$
is adapted to the filtration $\mathcal{F}$ and for any $u>0$,
\begin{align*}
\E\big[Q^f_{t+u}\big|\mathcal{F}_t\big]
&= Q^f_t + \sum_{y\in I} \E\big[ f(\sigma_y-\tau_y,\sigma_y)\mathbf{1}_{t<\sigma_y\le t+u} | \mathcal{F}_t\big] \\
&= Q^f_t + \sum_{y\in I} \int_t^{t+u} \E\big[ f(s-\tau_y,s)h_{A_s}(s-\tau_y) \mathbf{1}_{\sigma_y>s} |\mathcal{F}_t\big] ds;
\end{align*}
and similarly, $H^f$ is adapted to $\mathcal{F}$, continuous and
$$\E\big[H^f_{t+u}\big|\mathcal{F}_t\big]
= H^f_t + \sum_{y\in I} \E\bigg[ \int_t^{t+u} f(s-\tau_y,s)h_{A_s}(s-\tau_y)\mathbf{1}_{\tau_y\le s<\sigma_y} ds \Big|\mathcal{F}_t\bigg].$$
Hence, $H^f$ is the compensator of $Q^f$, viewing that $h_{A_s}(s-\tau_y)=0$ for $s<\tau_y$.

The proof for other compensators follows from the fact that
$\int_0^t (b_{A_s} \widecheck m_{A_s},A_s)ds$  and $\int_0^t (h_{A_s} \widehat m_{A_s},A_s)ds$ are compensators for $\widecheck B$ and $\widehat B$.
\end{proof}

\begin{proof} [Proof of Proposition \ref{P:SemiMGt}]
It remains to prove \eqref{QVMtf2}. 
Note that $M^f_t = M_{\widecheck B,f}(t) + M_{\widehat B,f}(t) - M_{D,f}(t)$, and that the martingales $M_{D,f}$, $M_{\widecheck B,f}$ and $M_{\widehat B,f}$ are purely discontinuous.
Since $M_{\widecheck B,f}$ and $M_{\widehat B,f}$ do not jump together,
$\big[M_{\widecheck B,f}, M_{\widehat B,f}\big]_t$ and thus $\big<M_{\widecheck B,f}, M_{\widehat B,f}\big>_t$ are zero.
Similarly for $M_{D,f}$ and $M_{\widecheck B,f}$, giving $\big<M_{D,f}, M_{\widecheck B,f}\big>_t =0$.
However, $M_{D,f}$ and $M_{\widehat B,f}$ jump together when there is a birth by splitting with
$\Delta M_{D,f}(t) = \sum_{y\in I} f(\lambda_y,t) \mathbf{1}_{\sigma_y=t}$ and
$\Delta M_{\widehat B,f}(t) = \sum_{y\in I} f(0,t) \mathbf{1}_{\sigma_y=t} \sum_{i\in \N}\mathbf{1}_{\tau_{yi}=t}$.
Therefore,
$$\big[M_{D,f}, M_{\widehat B,f}\big]_t
= \sum_{s\le t} \sum_{y\in I} f(0,s) f(\lambda_y,s) \mathbf{1}_{\sigma_y=s} \sum_{i\in \N}\mathbf{1}_{\tau_{yi}=s}$$
and its compensator
$$\big<M_{D,f}, M_{\widehat B,f}\big>_t
= \int_0^t f_s(0) \big( f_sh_{A_s}\widehat m_{A_s}, A_s\big) ds.$$
Thus, $\big<M^f\big>_t = \big<M_{\widecheck B,f}\big>_t + \big<M_{\widehat B,f}\big>_t + \big<M_{D,f}\big>_t -2 \big<M_{D,f}, M_{\widehat B,f}\big>_t$, and we have \eqref{QVMtf2}.
\end{proof}


\section{Proof of the Central Limit Theorem} \label{S:Proof}

We establish the tightness of the sequence $Z^K$,
and show the uniqueness of the limit.

\subsection{Tightness of $Z^K$}

First we prove a result for the tightness of   $W^{-j}$-valued processes in the Skorokhod space $\mathbb{D}(\mathbb{T},W^{-j})$, which we will apply to $Z_t^K$ with $j=4$.

\begin{theorem} \label{T:TightCond}
Suppose $(\mu^K)_{K\ge1}$ is a sequence of $W^{-j}$-valued c\`adl\`ag processes.
Assume that the dynamics of $\mu^K$ are given by
\begin{equation}\label{Thm10}
(f,\mu^K_t) = (f,\mu^K_0) + \int_0^t \Lambda^K_s f ds + \tilde M^{f,K }_t,
\end{equation}
where $\tilde M^{f,K}$ is a martingale with predictable quadratic variation of the form
\begin{equation}\label{QVThm10}
\big<\tilde M^{f,K}\big>_t = \int_0^t \Gamma^K_s f ds,
\end{equation}
and $\Lambda^K_t$ and $\Gamma^K_t$ are functionals on $W^j$.
The sequence $(\mu^K)_{K\ge1}$ is tight in $\mathbb{D}(\mathbb{T},W^{-j})$
if the following conditions are satisfied:
\begin{enumerate}
\item[(T1)]
There exists $i<j$ such that
for all $t\in\mathbb{T}$,
$$\sup_{K\ge1} \mathbb{E}\big[ ||\mu_t^K||_{W^{-i}} \big] < \infty.$$
\item[(T2)]
There exists $K_0 \ge1$ such that
\begin{gather}
\sup_{K\ge K_0} \E \bigg[ \sup_{t\le T} ||\Lambda^K_t||_{W^{-j}} \bigg] \le c_T
\tag{i} \label{T2i} \\
\sup_{K\ge K_0} \E \bigg[ \sup_{t\le T} \Big|\sum_{l\ge1} \Gamma^K_t p^j_l\Big| \bigg] \le c_T.
\tag{ii} \label{T2ii}
\end{gather}
\end{enumerate}
\end{theorem}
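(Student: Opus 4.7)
My plan is to verify the two standard ingredients for tightness of Hilbert-valued c\`adl\`ag semimartingales: a compact containment property and an Aldous-Rebolledo modulus-of-continuity condition, and then invoke a Joffe-M\'etivier (or Mitoma) tightness theorem on the Hilbert space $W^{-j}$. Throughout I use the decomposition $\mu^K_t = \mu^K_0 + V^K_t + \tilde M^K_t$ with $V^K_t = \int_0^t \Lambda^K_s\,ds$.

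For compact containment I exploit condition (T1) together with the fact that, since $i<j$, the embedding $W^{-i} \hookrightarrow W^{-j}$ factors through the chain of Hilbert-Schmidt embeddings stated in Subsection \ref{S:Spaces}, hence is compact. Consequently the closed $W^{-i}$-ball of radius $R$ is a compact subset of $W^{-j}$, and Markov's inequality applied to (T1) yields $\sup_K \P(\mu^K_t \notin \overline{B_R^{W^{-i}}}) \le c/R$ for every fixed $t$, producing compact sets of arbitrarily high probability in $W^{-j}$.

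For the Aldous condition I must show that whenever $(\tau_K)$ are stopping times bounded by $T$ and $\delta_K \downarrow 0$, $\|\mu^K_{\tau_K+\delta_K} - \mu^K_{\tau_K}\|_{W^{-j}} \to 0$ in probability. The drift increment is bounded in $W^{-j}$-norm by $\delta_K \sup_{s\le T}\|\Lambda^K_s\|_{W^{-j}}$, which vanishes in probability by (T2)(i) and Markov. For the martingale increment I use the Riesz identification of $W^{-j}$ with $W^j$: if $(p^j_l)_{l\ge 1}$ is a CONB of $W^j$, then Parseval gives $\|\phi\|^2_{W^{-j}} = \sum_l |\phi(p^j_l)|^2$ for any $\phi \in W^{-j}$. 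Applying this to $\tilde M^K_{\tau_K+\delta_K} - \tilde M^K_{\tau_K}$, the orthogonality of martingale increments and Doob yield
\[
\E\|\tilde M^K_{\tau_K+\delta_K} - \tilde M^K_{\tau_K}\|_{W^{-j}}^2 = \sum_{l\ge 1}\E\bigl[\langle \tilde M^{p^j_l,K}\rangle_{\tau_K+\delta_K} - \langle \tilde M^{p^j_l,K}\rangle_{\tau_K}\bigr] \le \delta_K\,\E\Bigl[\sup_{s\le T}\Bigl|\sum_{l\ge 1}\Gamma^K_s p^j_l\Bigr|\Bigr],
\]
which tends to zero by (T2)(ii). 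Combining both estimates via Markov's inequality establishes the Aldous condition for $\mu^K$ as a $W^{-j}$-valued process.

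The main technical obstacle is justifying the Parseval/trace manipulation: exchanging the sum over $l$ with expectation, and lifting the coordinate-wise martingales $\tilde M^{p^j_l,K}$ into a genuine $W^{-j}$-valued square-integrable martingale whose Hilbert-valued quadratic variation has the trace representation just used. This is precisely what (T2)(ii) supplies, since it forces the covariance operator $f \mapsto \Gamma^K_s f$ (on $W^j$) to be trace-class uniformly in $K$ and $s\le T$. Once this is in place, the standard Joffe-M\'etivier tightness criterion for Hilbert-valued semimartingales -- or equivalently Mitoma's criterion applied with the dense test set $W^j$ -- combines compact containment with the Aldous bound to yield tightness of $(\mu^K)_{K\ge 1}$ in $\D(\mathbb{T},W^{-j})$.
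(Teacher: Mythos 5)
Your proposal is correct and follows essentially the same route as the paper: compact containment from the Hilbert--Schmidt embedding $W^{-i}\hookrightarrow W^{-j}$ plus (T1) and Markov, the drift increment controlled by $\delta\sup_{t\le T}\|\Lambda^K_t\|_{W^{-j}}$ via (T2)(i), and the martingale increment via the Parseval/trace identity $\|\tilde M^K\|^2_{W^{-j}}=\sum_l(\tilde M^{p^j_l,K})^2$ together with (T2)(ii). The only cosmetic difference is that you bound $\E\|\tilde M^K_{\tau+\delta}-\tilde M^K_\tau\|^2_{W^{-j}}$ directly, while the paper phrases the same estimate as control of the increments of the trace quadratic variation $\sum_l\langle\tilde M^{p^j_l,K}\rangle$, as permitted by its version of the Aldous--Rebolledo lemma.
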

This can be proved by showing that the
Aldous-Rebolledo criteria for tightness, stated below, holds.
For more details see for example \cite{Ald78} and \cite[pp. 34-35]{JofMet86}.

\begin{lemma}[Aldous-Rebolledo] \label{L:AldousReb}
Let $H$ be a separable Hilbert space.
A sequence $(\mu^K)_{K\ge1}$ of $H$-valued c\`adl\`ag processes is
tight in $\mathbb{D}(\mathbb{T},H)$ if the following conditions are satisfied:
\begin{enumerate}
\item[(A)]
For every $t\in\T$, $(\mu^K_t)_{K\ge1}$ is tight in $H$.
\item[(B)]
For each $\epsilon_1, \epsilon_2 >0$, there exist $\delta>0$ and $K_0 \ge1$ such that
for every sequence of stopping times $\tau^K \le T$,
$$\sup_{K>K_0} \sup_{\zeta<\delta} \mathbb{P} \big(|| \mu_{(\tau^K+\zeta)\wedge T}^K - \mu_{\tau^K}^K ||_H >\epsilon_1\big) <\epsilon_2.$$
\end{enumerate}
If $\mu^K_t$ admits a semimartingale decomposition, then for (B), it is sufficient to have it for the finite variation part and the predictable quadratic variation of the martingale part.
\end{lemma}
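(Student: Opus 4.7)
The approach is to verify conditions (A) and (B) of the Aldous--Rebolledo criterion for the Hilbert space $H=W^{-j}$, exploiting the semimartingale representation \eqref{Thm10}--\eqref{QVThm10} together with hypotheses (T1) and (T2). For condition (A), I iterate the Hilbert--Schmidt embedding $W^{-k}\underset{H.S.}{\hookrightarrow}W^{-(k+1)}$ from $k=i$ up to $k=j-1$ to obtain that the inclusion $W^{-i}\hookrightarrow W^{-j}$ is compact, so closed balls in $W^{-i}$ are relatively compact in $W^{-j}$. Given any $\epsilon>0$, Markov's inequality together with (T1) produces an $R$ such that $\mathbb{P}(\|\mu^K_t\|_{W^{-i}}>R)<\epsilon$ uniformly in $K$, showing that $(\mu^K_t)_K$ concentrates on a compact subset of $W^{-j}$ and hence is tight there.

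For condition (B), decompose $\mu^K_t=\mu^K_0+V^K_t+\tilde M^K_t$, where $V^K_t\in W^{-j}$ is defined by $(f,V^K_t)=\int_0^t\Lambda^K_sf\,ds$ and $\tilde M^K$ is the $W^{-j}$-valued martingale whose $f$-coordinate is $\tilde M^{f,K}$. By the last sentence of the Aldous--Rebolledo lemma, it suffices to verify (B) separately for the finite-variation process $V^K$ and for the trace of the predictable quadratic variation $\mathrm{tr}\,\langle\tilde M^K\rangle_t$. For $V^K$ one has the pathwise bound $\|V^K_{(\tau^K+\zeta)\wedge T}-V^K_{\tau^K}\|_{W^{-j}}\le \zeta\sup_{s\le T}\|\Lambda^K_s\|_{W^{-j}}$, so Markov combined with (T2)(i) yields the Aldous condition by taking $\delta<\epsilon_1\epsilon_2/c_T$.

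For the martingale part, use the Riesz isomorphism between $W^j$ and $W^{-j}$ to obtain the Parseval identity $\|\nu\|_{W^{-j}}^2=\sum_l(p^j_l,\nu)^2$ for every $\nu\in W^{-j}$, which identifies
$$\mathrm{tr}\,\langle\tilde M^K\rangle_t=\sum_l\langle\tilde M^{p^j_l,K}\rangle_t=\sum_l\int_0^t\Gamma^K_sp^j_l\,ds=\int_0^t\sum_l\Gamma^K_sp^j_l\,ds,$$
where the last equality uses Tonelli, valid because each $\Gamma^K_sp^j_l$ is non-negative as the density of the non-decreasing process $\langle\tilde M^{p^j_l,K}\rangle$. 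Consequently the increment of $\mathrm{tr}\,\langle\tilde M^K\rangle$ over $[\tau^K,(\tau^K+\zeta)\wedge T]$ is dominated by $\zeta\sup_{s\le T}|\sum_l\Gamma^K_sp^j_l|$, and a final application of Markov's inequality together with (T2)(ii) closes the argument.

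The main obstacle I expect is the $W^{-j}$-valued martingale step: realising the Hilbert norm of $W^{-j}$ through the ONB $(p^j_l)$ via Riesz, justifying the exchange of summation and time integral, and invoking the Rebolledo reduction from the martingale itself to its PQV trace. These steps are standard in the theory of Hilbert-space-valued semimartingales, but they explain precisely why condition (T2)(ii) is phrased as a bound on $\sum_l\Gamma^K_sp^j_l$ rather than on some abstract trace quantity --- the sum is nothing but the time-derivative of the trace of $\langle\tilde M^K\rangle$, which is the object controlled by the Rebolledo reduction.
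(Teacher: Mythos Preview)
Your proposal is not a proof of the Aldous--Rebolledo lemma as stated; it is a proof of Theorem~\ref{T:TightCond}, the preceding result. The lemma itself is a classical tightness criterion that the paper cites from the literature (\cite{JofMet86}, \cite{Ald78}) and does not prove --- it is merely stated as the tool to be applied. Your argument starts from the hypotheses (T1) and (T2) of Theorem~\ref{T:TightCond} and verifies conditions (A) and (B) of the lemma; that is exactly what the paper does in its ``Proof of Theorem~\ref{T:TightCond}'', and your version matches it essentially line for line: compact embedding plus Markov's inequality for (A), the pathwise bound $\|V^K_{(\tau^K+\zeta)\wedge T}-V^K_{\tau^K}\|_{W^{-j}}\le\zeta\sup_{s\le T}\|\Lambda^K_s\|_{W^{-j}}$ plus Markov for the finite-variation part, and the Parseval identification $\langle\langle\tilde M^K\rangle\rangle_t=\sum_l\langle\tilde M^{p^j_l,K}\rangle_t$ plus Markov for the martingale part.

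So the mathematics is correct, but misdirected: you have reproduced the paper's proof of Theorem~\ref{T:TightCond} rather than given a proof of the Aldous--Rebolledo lemma itself. A proof of the lemma would have to work from conditions (A) and (B) alone --- with no reference to (T1), (T2), $\Lambda^K$, $\Gamma^K$, or the specific Sobolev setting --- and establish tightness in $\mathbb{D}(\mathbb{T},H)$ directly, typically via Aldous' stopping-time characterisation of tightness together with Rebolledo's reduction for Hilbert-valued semimartingales. The paper does not carry this out.
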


\begin{proof}[Proof of Theorem \ref{T:TightCond}]
Note that, for $i<j$, $W^{-i} \underset{H.S.}{\hookrightarrow} W^{-j}$, thus, the closed ball
$B_{W^{-i}}(R) := \{ \mu\in W^{-i}: ||\mu||_{W^{-i}} \le R\}$
is compact in $W^{-j}$.
Also,
$$\P(\mu^K_t \notin B_{W^{-i}}(R)) = \P(||\mu^K_t||_{W^{-i}}>R) \le \frac1R \E[||\mu^K_t||_{W^{-i}}].$$
Therefore, if (T1) holds, there exists a compact set $\mathcal{C}_\epsilon$ such that
$\P(\mu^K_t \notin \mathcal{C}_\epsilon) < \epsilon$ for all $K$,
which in turn implies (A).

Next, we show that (T2) implies (B).
Since $\mu^K$ has the form $\mu^K_t = V^K_t + \tilde M^K_t$, it remains to show (B) for $V^K_t$ and   predictable quadratic variation $\big<\big<\tilde M^K\big>\big>_t$, where $\big<\big<\tilde{M}^K\big>\big>_t$ is defined such that $\big(||\tilde M^K_t||_{W^{-j}}^2 - \big<\big<\tilde{M}^K\big>\big>_t\big)_{t\in\mathbb{T}}$ is a martingale.

To obatin (B) for $V^K_t$,  observe that by \eqref{Thm10}
\begin{multline*}
\big|(f, V_{(\tau^K+\zeta)\wedge T}^K - V_{\tau^K}^K)\big|
= \bigg| \int_{\tau^K}^{(\tau^K+\zeta)\wedge T} \Lambda^K_s f ds \bigg| \\
\le \int_{\tau^K}^{(\tau^K+\zeta)\wedge T} |\Lambda^K_s f| ds
\le \int_{\tau^K}^{(\tau^K+\zeta)\wedge T} \big|\big|\Lambda^K_s\big|\big|_{W^{-j}} ||f||_{W^{j}} ds.
\end{multline*}
Hence
\begin{multline*}
\big|\big| V_{(\tau^K+\zeta)\wedge T}^K - V_{\tau^K}^K \big|\big|_{W^{-j}}
\le \int_{\tau^K}^{(\tau^K+\zeta)\wedge T} \big|\big|\Lambda^K_s\big|\big|_{W^{-j}} ds \\
= \int_0^\zeta \big|\big|\Lambda^K_{(\tau^K+s)\wedge T}\big|\big|_{W^{-j}} ds
\le \delta \sup_{t\le T} \big|\big|\Lambda^K_t \big|\big|_{W^{-j}}.
\end{multline*}
(B) now follows from condition (T2)(i) by
  Markov's inequality.

Write $p_l$ for $p^j_l$.
 Since $||\tilde M^K_t||^2_{W^{-j}} = \sum_{l\ge1} (\tilde M_t^{p_l,K})^2$
by the Riesz Representation Theorem and Parseval's Identity,
we have $\big<\big<\tilde{M}^K\big>\big>_t = \sum_{l\ge1} \big<\tilde M^{p_l,K}\big>_t$.
To obtain (B) for $\big<\big<\tilde{M}^K\big>\big>_t$, by \eqref{QVThm10}, we have
\begin{align*}
&\big| \big<\big<\tilde{M}^K\big>\big>_{(\tau^K+\zeta)\wedge T} - \big<\big<\tilde{M}^K\big>\big>_{\tau^K} \big|
= \bigg|\sum_{l\ge1} \big<\tilde{M}^{p_l,K}\big>_{(\tau^K+\zeta)\wedge T} - \sum_{l\ge1} \big<\tilde{M}^{p_l,K}\big>_{\tau^K}\bigg| \\
&\q\q\q\q\q\q= \bigg|\sum_{l\ge1} \int_{\tau^K}^{(\tau^K+\zeta)\wedge T} \Gamma^K_s p_l \, ds \bigg|
= \bigg|\sum_{l\ge1} \int_0^\zeta \Gamma^K_{(\tau^K+s)\wedge T} p_l \, ds \bigg|
\end{align*}
and taking expectation,
\begin{multline*}
\E\Big[ \big| \big<\big<\tilde{M}^K\big>\big>_{(\tau^K+\zeta)\wedge T} - \big<\big<\tilde{M}^K\big>\big>_{\tau^K} \big| \Big]
\le \int_0^\zeta \E\bigg[ \Big|\sum_{l\ge1} \Gamma^K_{(\tau^K+s)\wedge T} p_l \Big| \bigg] ds \\
\le \int_0^\zeta \E\bigg[ \sup_{t\le T} \Big|\sum_{l\ge1} \Gamma^K_{t} p_l \Big| \bigg] ds
\le \delta \E\bigg[ \sup_{t\le T} \Big|\sum_{l\ge1} \Gamma^K_{t} p_l \Big| \bigg].
\end{multline*}
(B) now follows from condition (T2)(ii) by
  Markov's inequality.
\end{proof}

The rest of the proof consists of
 checking conditions (T1) and (T2) in space $W^{-4}$. The proof is involved and requires somewhat different representations for $Z_t^K$, and is split into sections.

\subsection{Representation for $Z_t^K$}
As representation \eqref{E:fZK} involves the unbounded derivative operator ($f\rightarrow f'$), we extend \eqref{E:fZK} to functions of two variables $f(x,s) \equiv f_s(x)$
and apply the extension to the special case $f(x,s)=\phi(x+t-s)$ (for some fixed $t$ and some function $\phi$).
This results in the removal of the derivative operator.

From \eqref{E:AbarK} and \eqref{E:LLN}, we have, for test function of two variables  $f\in C^{1,1}(\mathbb{T}^*\times\mathbb{T})$ and $t\in\mathbb{T}$,
\begin{multline} \label{E:fZK2}
(f_t,Z_t^K) = (f_0,Z_0^K) 
+ \sqrt{K} \int_0^t \left( - (h_{\bar A_s^K}^K - h^\infty_{\bar{A}_s}) f_s + f_s(0) (n_{\bar A_s^K}^K - n^\infty_{\bar{A}_s}) , \bar{A}_s \right) ds \\
+ \int_0^t \left( \partial_1f_s + \partial_2f_s - h_{\bar A_s^K}^K f_s + f_s(0) n_{\bar A_s^K}^K , Z_s^K \right) ds + \tilde{M}_t^{f,K},
\end{multline}
where $\tilde{M}_t^{f,K}$ is a
martingale with predictable quadratic variation
\begin{equation}\label{QVMGZt}
\big<\tilde{M}^{f,K}\big>_t = \int_0^t \big( f^2_s(0) w^K_{\bar A^K_s} + h^K_{\bar A^K_s}f^2_s - 2f_s(0)h^K_{\bar A^K_s}\widehat m^K_{\bar A^K_s}f_s, \bar A^K_s \big) ds.
\end{equation}
As explained above, applying \eqref{E:fZK2} to
$$f(x,s) = \phi(x+t-s) = :\varTheta_{t-s}\phi(x)$$
(for a fixed $t$) makes the term $\partial_1f(x,s) + \partial_2f(x,s)$ vanish.

Next, we obtain a representation for the corresponding martingale $\tilde{M}_t^{f,K}$.
Define the  measure $M_t$ as
\begin{align}\label{Mt}
M_t(dx) &= \delta_0(dx) \bigg(\widecheck B([0,t]) - \int_0^t (b_{A_s}\widecheck m_{A_s}, A_s) ds\bigg) \\
&\qquad+ \delta_0(dx) \bigg(\widehat B([0,t]) - \int_0^t (h_{A_s}\widehat m_{A_s}, A_s) ds\bigg) \nonumber\\
&\qquad- \bigg(\sum_{y\in I} \delta_{\lambda_y}(dx) \mathbf{1}_{\sigma_y\le t} - \int_0^t A^h_s(dx)ds \bigg),\nonumber
\end{align}
where
$$A^h_t(dx) = \sum_{y\in I} \delta_{t-\tau_y}(dx) h_{A_t}(x) \mathbf{1}_{\tau_y\le t<\sigma_y}.$$
By direct calculations, it can be seen that the martingale  $M^f_t$ in
\eqref{E:Mtf} is precisely the integral of $f$ with respect to $M_t$, i.e. $M^f_t = (f, M_t)$.
It is easy to extend  the definition of the integral to functions of two variables $f\in C(\mathbb{T}^*\times\mathbb{T})$  so that $\int_0^t \big(f_s,dM_s\big)$ coincides with $M^f_t$   in   \eqref{E:BasicEqnM2}.
Indeed, since $((g,M_t))_{t\in\mathbb{T}}$ is a martingale for any $g \in C(\mathbb{T}^*)$,
for any   $\varphi \in C(\mathbb{T})$,
the integral $\int_0^t \varphi(s) d(g,M_s)$, $t\in\mathbb{T}$,
is a well-defined martingale with predictable quadratic variation
$$\bigg<\int_0^\cdot \varphi(s) d(g,M_s)\bigg>_t
= \int_0^t \varphi^2(s) \big(g^2(0)w_{A_s} + h_{A_s}g^2 - 2g(0)h_{A_s}\widehat m_{A_s} g, A_s\big) ds.$$
Write $\int_0^t \big(\varphi(s)g,dM_s\big)$ for $\int_0^t \varphi(s) d(g,M_s)$. The extension to an arbitrary $f\in C(\mathbb{T}^*\times\mathbb{T})$ is obtained by the usual
application of the Monotone Class Theorem (e.g. \cite[I.22.1]{DelMey78}).

Let $\tilde M^K = \frac{1}{\sqrt{K}}M^K$. Since, for a fixed $t\in\mathbb{T}$, the function $f(x,s)=\varTheta_{t-s}\phi(x)$ satisfies $f(x,t)=\phi(x)$, \eqref{E:fZK2} reduces to \eqref{E:phiZK} below.

\begin{corollary}
For $\phi \in C^1$ and $t\in\mathbb{T}$,
\begin{multline} \label{E:phiZK}
(\phi,Z^K_t) = (\varTheta_t\phi,Z^K_0) \\
+ \sqrt{K} \int_0^t \big( -(h^K_{\bar A^K_s}-h^\infty_{\bar A_s})\varTheta_{t-s}\phi + \varTheta_{t-s}\phi(0)(n^K_{\bar A^K_s}-n^\infty_{\bar A_s}), \bar A_s \big) ds \\
+ \int_0^t \big( - h^K_{\bar A^K_s}\varTheta_{t-s}\phi + \varTheta_{t-s}\phi(0)n^K_{\bar A^K_s}, Z^K_s \big) ds
+ \int_0^t \big(\varTheta_{t-s}\phi,d\tilde M^K_s\big).
\end{multline}
\end{corollary}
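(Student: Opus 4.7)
The strategy is a direct application of the two-variable representation \eqref{E:fZK2} to the specific test function
$$f(x,s) := \varTheta_{t-s}\phi(x) = \phi(x+t-s),$$
with $t$ fixed. First, I would check that this $f$ is in $C^{1,1}(\mathbb{T}^*\times\mathbb{T})$ whenever $\phi\in C^1$. Since the measures $\bar A^K_s$, $\bar A^\infty_s$ and $Z^K_s$ are all supported in $[0,s+a^*]$, the relevant values of $x+t-s$ for $s\in[0,t]$ stay inside $[0,t+a^*]\subseteq\mathbb{T}^*$, so only the values of $\phi$ on $\mathbb{T}^*$ matter. The crucial algebraic fact is
$$\partial_1 f(x,s)+\partial_2 f(x,s)=\phi'(x+t-s)-\phi'(x+t-s)=0,$$
so the transport term in \eqref{E:fZK2} disappears. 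Reading off the remaining boundary values, $f(x,t)=\phi(x)$, $f(x,0)=\varTheta_t\phi(x)$, and $f(0,s)=\varTheta_{t-s}\phi(0)$, which yield precisely the non-martingale terms on the right-hand side of \eqref{E:phiZK}.

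For the martingale term, the plan is to identify $\tilde{M}^{f,K}_t$ with $\int_0^t(\varTheta_{t-s}\phi,d\tilde{M}^K_s)$ via the measure-valued martingale $M^K$ defined in \eqref{Mt} (divided by $\sqrt{K}$). The definition of $\int_0^t(f(\cdot,s),d\tilde{M}^K_s)$ was extended to arbitrary $f\in C(\mathbb{T}^*\times\mathbb{T})$ through the Monotone Class Theorem argument in the paragraph preceding the corollary, and for such an $f$ the resulting process coincides with the martingale $M^{f,K}_t/\sqrt K$ appearing in \eqref{E:BasicEqnM2}, hence with $\tilde{M}^{f,K}_t$ in \eqref{E:fZK2} (whose quadratic variation \eqref{QVMGZt} already matches the one produced by the measure-valued representation). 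Applying this with $f(x,s)=\varTheta_{t-s}\phi(x)$ gives the stochastic integral in the last line of \eqref{E:phiZK}. The only mild subtlety, and the step I would be most careful about, is that $f$ depends on the fixed upper limit $t$ through $\varTheta_{t-s}$, so one must verify the integrand is still predictable and lies in the domain of the integral with respect to $\tilde{M}^K$; this is immediate from the continuity of $(x,s)\mapsto\phi(x+t-s)$ and the construction of the integral via Monotone Class, but it is the only non-cosmetic point beyond bookkeeping.
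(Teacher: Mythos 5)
Your proposal is correct and follows essentially the same route as the paper: the corollary is obtained by substituting $f(x,s)=\varTheta_{t-s}\phi(x)$ into the two-variable representation \eqref{E:fZK2}, observing that $\partial_1 f+\partial_2 f=0$, and identifying the martingale term with $\int_0^t(\varTheta_{t-s}\phi,d\tilde M^K_s)$ via the measure-valued martingale \eqref{Mt} and the Monotone Class extension. Your remarks on the support of the measures and on predictability of the $t$-dependent integrand are exactly the (minor) points the paper leaves implicit, handling the domain issue instead via the extension Lemma \ref{L:ShiftExt} when norm bounds are needed later.
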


The main step in proving tightness is the following bound.

\subsection{Boundedness of $\E \big[ ||Z^K_t||_{W^{-2}} \big]$}

\begin{proposition} \label{P:ZW-2Bound}
 $$\sup_{t\le T} \sup_{K\ge1} \E \big[ ||Z^K_t||_{W^{-2}} \big] <\infty.$$
\end{proposition}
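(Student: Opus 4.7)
The plan is to bound $u_K(t):=\E\big[||Z^K_t||_{W^{-2}}^2\big]$ uniformly in $K$ and $t\le T$; Cauchy--Schwarz then yields the proposition. Using Parseval's identity for the Hilbert space $W^{-2}$ with dual basis induced by an orthonormal basis $(p^2_l)_{l\ge1}$ of $W^2$,
$$u_K(t) = \sum_{l\ge1} \E\big[(p^2_l,Z^K_t)^2\big],$$
so it suffices to bound this sum uniformly. For each $l$, I apply representation \eqref{E:phiZK} with $\phi=p^2_l$ and split the right-hand side into four pieces: an initial contribution $I^0_l=(\varTheta_t p^2_l,Z^K_0)$, the $\sqrt K$-scaled rate-difference term $I^1_l$, the integral $I^2_l$ against $Z^K_s$, and the stochastic integral $I^3_l=\int_0^t(\varTheta_{t-s}p^2_l,d\tilde M^K_s)$. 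The inequality $(a+b+c+d)^2 \le 4(a^2+b^2+c^2+d^2)$ reduces the task to bounding $\sum_l \E[(I^i_l)^2]$ for $i=0,1,2,3$.

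The initial piece is $\sum_l(I^0_l)^2 \le c\,||Z^K_0||_{W^{-2}}^2$, bounded by (A4), using the boundedness of the shift operator $\varTheta_t$ on $W^2$ uniformly in $t\in\mathbb{T}$. For the $\sqrt K$-term, the key decomposition is
$$\sqrt K\big(h^K_{\bar A^K_s}-h^\infty_{\bar A^\infty_s}\big) = \sqrt K\big(h^K_{\bar A^K_s}-h^K_{\bar A^\infty_s}\big) + \sqrt K\big(h^K_{\bar A^\infty_s}-h^\infty_{\bar A^\infty_s}\big),$$
and analogously for $n$. By the Lipschitz condition (C1) the first summand is bounded in $L^\infty$ by $c\,||Z^K_s||_{C^{-0}} \le c\,||Z^K_s||_{W^{-2}}$, invoking $C^{-0}\hookrightarrow W^{-2}$; the second summand is $o(1)$ by the quantitative convergence rate in (A2). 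Applying Cauchy--Schwarz in $s$, using the pointwise bound $\sum_l(p^2_l(y))^2 \le c$ from \eqref{E:CONB} to collapse the $l$-sum, and invoking the standard a priori estimate $\sup_{K,s\le T}\E[|\bar A^K_s|]<\infty$, I obtain
$$\sum_l \E\big[(I^1_l)^2\big] \le c\int_0^t \E\big[||Z^K_s||_{W^{-2}}^2\big]\,ds + o(1).$$

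The term $I^2_l$ is handled in the same fashion but using the uniform rate boundedness (C0) instead of the $\sqrt K$ split, contributing another $c\int_0^t u_K(s)\,ds$. For the martingale piece I apply the $L^2$-isometry together with the quadratic variation formula \eqref{QVMGZt}: the boundedness of the rates (C0) and (A0), combined with a further application of \eqref{E:CONB} to sum $\sum_l p^2_l(0)^2$, $\sum_l p^2_l(x+t-s)^2$ and the cross term $\sum_l p^2_l(t-s)p^2_l(x+t-s)$, yields $\sum_l \E[(I^3_l)^2] \le c\,T$. Collecting all contributions gives $u_K(t) \le C_1 + C_2\int_0^t u_K(s)\,ds$ with constants independent of $K$, and Gronwall's lemma closes the argument. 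The main obstacle is the $\sqrt K$-scaled term: both the Lipschitz continuity (C1) and the quantitative rate in (A2) must be used simultaneously to absorb the $\sqrt K$ into the $W^{-2}$-norm of $Z^K_s$ without blow-up, while the interplay of Parseval with \eqref{E:CONB} is precisely what allows the Sobolev scale to close back on itself.
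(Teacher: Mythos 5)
There is a genuine gap at the heart of your treatment of the $\sqrt K$-scaled term, and it is precisely the step the paper's assumption (A3) was designed to fix. You bound $\sqrt K\big(h^K_{\bar A^K_s}-h^K_{\bar A^\infty_s}\big)$ via the Lipschitz condition (C1) by $c\,\sqrt K\,||\bar A^K_s-\bar A^\infty_s||_{C^{-0}} = c\,||Z^K_s||_{C^{-0}}$, and then claim $||Z^K_s||_{C^{-0}} \le c\,||Z^K_s||_{W^{-2}}$ ``invoking $C^{-0}\hookrightarrow W^{-2}$.'' The embedding goes the wrong way for you: $C^{-0}\hookrightarrow W^{-2}$ means $||\mu||_{W^{-2}} \le c\,||\mu||_{C^{-0}}$ (the $W^{-2}$ norm is a supremum over a \emph{smaller} ball of test functions), and the reverse inequality is false. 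Worse, $||Z^K_s||_{C^{-0}}$ is essentially the total variation of $\sqrt K(\bar A^K_s-\bar A^\infty_s)$; since $\bar A^K_s$ is purely atomic and $\bar A^\infty_s$ is typically not, this quantity is of order $\sqrt K\,(|\bar A^K_s|+|\bar A^\infty_s|)$ and blows up. So the Gronwall inequality you set up cannot be closed by this route, and no amount of rearrangement using only (C0)--(C2) will repair it. The paper instead splits $h^K_{\bar A^K_s}-h^\infty_{\bar A^\infty_s}$ into $(h^K_{\bar A^K_s}-h^\infty_{\bar A^K_s})$ plus the increment of the \emph{limiting} parameter, and controls the latter by Fr\'echet differentiability (A3): the remainder term is $o(||\bar A^K_s-\bar A^\infty_s||_{W^{-4}})$ and the linear term is bounded by $||\partial_A h^\infty_{\bar A^\infty_s}||_{\mathbb{L}^{-4}}\,||\bar A^K_s-\bar A^\infty_s||_{W^{-4}}$, so after multiplying by $\sqrt K$ one gets $c(1+||Z^K_s||_{W^{-4}})\le c(1+||Z^K_s||_{W^{-2}})$, with the embedding $W^{-2}\hookrightarrow W^{-4}$ now used in the correct direction (Propositions \ref{P:ParaBound} and \ref{P:KDiffLBnd}). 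Note that your proposal never invokes (A3) at all, which is the telltale sign that the $\sqrt K$ term has not actually been tamed.

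The remaining architecture of your argument (working with $\E[||Z^K_t||^2_{W^{-2}}]$ via Parseval, rather than the paper's pathwise bound on $||Z^K_t||_{W^{-2}}$ followed by a first-moment Gronwall) is a legitimate variant and would go through for the initial, drift-against-$Z^K_s$, and martingale pieces; the paper itself uses the second-moment/Parseval/\eqref{E:CONB} computation for the martingale term. One small additional point: in the $\sqrt K$ term the integration is against the deterministic measure $\bar A^\infty_s$, so the bound needed there is \eqref{E:1AInfty} rather than an a priori estimate on $\E[|\bar A^K_s|]$.
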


We remark that Proposition \ref{P:ZW-2Bound} remains true with the norm taken in $C^{-1}$.
However, for the ease of presentation (as we work with spaces $W^\cdot$ mostly throughout the paper), we prove the result for $W^{-2}$, which is sufficient for our purpose.
The proof is done using representation  \eqref{E:phiZK} with $\phi\in W^2$. Each term on the RHS is dealt with separately using successive bounds.

First, we need to overcome the fact that the functions $\phi$ and $\theta_t\phi$ are defined on different domains, $\mathbb{T}^*=[0,T^*]$ and $[0,T^*-t]$, respectively.
The following lemma constructs an extension of $\theta_t\phi$ to $\mathbb{T}^*$ in a way that controls the norm.

\begin{lemma} \label{L:ShiftExt}
Let  $\phi \in W^j$ for some $j\in\N$ and
  $t\in \mathbb{T}$ be fixed.
There exists a function $\psi: \mathbb{T}^* \to \R$ such that
$\psi(x) = \phi(x+t)$ for $x\in [0,T^*-t]$,
and $\psi\in W^j$ with
$||\psi||_{W^j} \le c||\phi||_{W^j}$, where $c$ is a constant that depends on $T^*$ and $j$, but independent of $\phi$.
\end{lemma}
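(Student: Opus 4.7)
The plan is to reduce the lemma to a standard one-dimensional Sobolev extension. First I would construct a bounded linear extension operator $E:W^j(\mathbb{T}^*)\to W^j(\R)$ with $(E\phi)|_{\mathbb{T}^*}=\phi$ and
\[
||E\phi||_{W^j(\R)} \le c\, ||\phi||_{W^j(\mathbb{T}^*)},
\]
where $c$ depends only on $T^*$ and $j$; then simply set $\psi(x):=(E\phi)(x+t)$ for $x\in\mathbb{T}^*$.

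The two required properties of $\psi$ are then immediate from this definition. For $x\in[0,T^*-t]$ we have $x+t\in[t,T^*]\subset\mathbb{T}^*$, so $\psi(x)=(E\phi)(x+t)=\phi(x+t)$. For the norm, using that restriction to a subinterval does not increase the $W^j$-norm, followed by translation-invariance of $||\cdot||_{W^j(\R)}$,
\[
||\psi||_{W^j(\mathbb{T}^*)} \le ||(E\phi)(\cdot+t)||_{W^j(\R)} = ||E\phi||_{W^j(\R)} \le c\, ||\phi||_{W^j(\mathbb{T}^*)}.
\]

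For the existence of $E$ the argument is classical, and I would either quote the general Sobolev extension theorem for Lipschitz domains or give an explicit Lions-type higher-order reflection. In the explicit version, near the right endpoint one sets
\[
(E\phi)(x) = \sum_{k=1}^{j+1} \lambda_k\, \phi\bigl(T^* - k(x-T^*)\bigr) \quad \text{for } x\in(T^*,T^*+T^*/(j+1)],
\]
with $(\lambda_k)$ determined by the linear system $\sum_k \lambda_k (-k)^i = 1$ for $i=0,\ldots,j-1$; this system is solvable by the non-vanishing of the Vandermonde determinant and guarantees $C^{j-1}$ matching at $x=T^*$. A symmetric formula handles the left endpoint, and multiplying by a smooth cutoff supported in a fixed neighborhood of $\mathbb{T}^*$ produces a function in $W^j(\R)$. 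The norm bound then follows by the chain rule, with constants depending only on $T^*$ and $j$.

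The one point that could cause trouble is the independence of $c$ from $t\in\mathbb{T}$; indeed, this is what the later proof of Proposition \ref{P:ZW-2Bound} needs in order to produce a bound uniform in $t$. Independence is automatic in the approach above, because $E$ is built once, from the geometry of $\mathbb{T}^*$ and the order $j$ alone, before any translation is applied; the parameter $t$ then enters only through the shift, which is an isometry of $W^j(\R)$ and contributes nothing to $c$. This is what makes the strategy preferable to, say, extending $\phi$ only from $[t,T^*]$ to $\mathbb{T}^*$, for which the extension constants would depend badly on $T^*-t$.
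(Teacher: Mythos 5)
Your proof is correct, but it is organized differently from the paper's. The paper works entirely on $\mathbb{T}^*$: it defines $\psi$ on $[0,T^*-t]$ as the shift of $\phi$ and then extends to $(T^*-t,T^*]$ by reflecting only the top derivative $\psi^{(j-1)}$ about the ($t$-dependent) point $x=T^*-t$, recovering the lower-order derivatives by integration from their values at $T^*-t$; the norm bound is then checked derivative by derivative, using $\|\phi\|_{C^{j-1}}\le\|\phi\|_{W^j}$ and the boundedness of $\mathbb{T}^*$. You instead build a single bounded extension operator $E:W^j(\mathbb{T}^*)\to W^j(\R)$ (via the classical higher-order reflection, whose matching conditions $\sum_k\lambda_k(-k)^i=1$, $i=0,\dots,j-1$, you state correctly --- matching up to order $j-1$ is exactly what is needed for $W^j$ membership across the junction) and then set $\psi=(E\phi)(\cdot+t)|_{\mathbb{T}^*}$. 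The chain restriction--translation--extension gives the bound immediately. What your route buys is that the uniformity of $c$ in $t$ is structurally obvious, since $E$ is constructed once from $\mathbb{T}^*$ and $j$ alone and translation is an isometry of $W^j(\R)$; in the paper's construction this uniformity also holds (the reflection about $T^*-t$ is an $L^2$ isometry onto a subinterval of $\mathbb{T}^*$, so nothing degenerates as $t$ varies) but it has to be read off from the estimates rather than being built in. The trade-off is that your argument either invokes the general extension theorem as a black box or requires writing out the Lions-type reflection and cutoff, whereas the paper's construction is elementary and self-contained on the fixed bounded interval. Both are complete proofs of the stated lemma.
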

\begin{proof}
Take $\psi$ such that $\psi(x) = \phi(x+t)$ for $x\in [0,T^*-t]$,
and $\psi^{(j-1)}(x) = \phi^{(j-1)}(2(T^*-t)-x+t)$ for $x\in (T^*-t,T^*]$.
That is, $\psi$ is extended by reflecting the $(j-1)$th derivative along $x=T^*-t$.
Then, $\psi^{(i)}$ is continuous for $i=0,1,\dots,j-1$. Note that
$\psi^{(j)}$ does not exist at $x=T^*-t$, unless $\phi^{(j)}(T^*) =0$.

It remains to show that $||\psi||_{W^j} \le c||\phi||_{W^j}$.
For $i=j-1,j$,
\begin{align*}
\int_\mathbb{T^*} \big(\psi^{(i)}(x)\big)^2 dx
&= \int_0^{T^*-t} \big(\psi^{(i)}(x)\big)^2 dx + \int_{T^*-t}^{T^*} \big(\psi^{(i)}(x)\big)^2 dx \\
&= \int_t^{T^*} \big(\phi^{(i)}(x)\big)^2 dx + \int_{T^*-t}^{T^*} \big(\phi^{(i)}(2(T^*-t)-x+t)\big)^2 dx.
\end{align*}
For $i=0,1,\dots,j-2$,
\begin{align*}
\int_\mathbb{T^*} \big(\psi^{(i)}(x)\big)^2 dx
&= \int_0^{T^*-t} \big(\psi^{(i)}(x)\big)^2 dx + \int_{T^*-t}^{T^*} \big(\psi^{(i)}(x)\big)^2 dx \\
&= \int_t^{T^*} \big(\phi^{(i)}(x)\big)^2 dx + \int_{T^*-t}^{T^*} \big(\psi^{(i)}(x)\big)^2 dx.
\end{align*}
For the last integral, note that for $x\in (T^*-t,T^*]$,
\begin{align*}
\psi^{(i)}(x) &= \psi^{(i)}(T^*-t) + \int_{T^*-t}^x \psi^{(i+1)}(y) dy \\
&= \phi^{(i)}(T^*) + \int_{T^*-t}^x \psi^{(i+1)}(y) dy,
\end{align*}
which can be obtained recursively and be expressed in terms of $\phi$.
Finally, as $\phi \in W^j$, we have $\phi\in C^{j-1}$
and $||\phi||_{C^{j-1}} = \max_{0\le i\le j-1} \sup_{x\in\mathbb{T}^*} |\phi^{(i)}(x)| < \infty.$
Thus, with $||\phi||_{C^{j-1}} \le ||\phi||_{W^j}$ and that $\mathbb{T}^*$ is a bounded interval,
we can bound $||\psi||_{W^j}$ in terms of $T^*$ and $||\phi||_{W^j}$
and write $||\psi||_{W^j} \le c||\phi||_{W^j}$.
\end{proof}

In the sequel, $\varTheta_t\phi$ will refer to its own extension to $\mathbb{T}^*$.
We immediately get the following inequalities:
 \begin{equation} \label{extend}
 ||\varTheta_t\phi||_{W^j} \le c ||\phi||_{W^j},\q
\mbox{and for any }  x\in\mathbb{T}^*, \; |\varTheta_t\phi (x)| \le c ||\phi||_{W^j}.
\end{equation}

Next, we give some bounds that are useful in proving Proposition \ref{P:ZW-2Bound}.

 \begin{proposition} \label{P:ParaBound}
Suppose (A2) and (A3) hold. Then,
for $t\in \mathbb{T}$ and for all $x\in \mathbb{T}^*$,
$$\sqrt{K} \big|h^K_{\bar A^K_t} - h^\infty_{\bar A_t}\big|(x)
\le c (1+ ||Z^K_t||_{W^{-4}})$$
and
$$\sqrt{K} \big|n^K_{\bar A^K_t} - n^\infty_{\bar A_t}\big|(x)
\le c (1+ ||Z^K_t||_{W^{-4}}).$$
\end{proposition}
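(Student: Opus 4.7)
My plan is to split each difference via the triangle inequality and handle the two resulting pieces separately. For $h$ I write
\[\sqrt{K}\bigl|h^K_{\bar A^K_t}-h^\infty_{\bar A^\infty_t}\bigr|(x)
\le \sqrt{K}\bigl|h^K_{\bar A^K_t}-h^\infty_{\bar A^K_t}\bigr|(x)+\sqrt{K}\bigl|h^\infty_{\bar A^K_t}-h^\infty_{\bar A^\infty_t}\bigr|(x).\]
The first term is immediately controlled by (A2): since $\sqrt{K}\sup_A\|h^K_A-h^\infty_A\|_\infty\to 0$, it is bounded in $L_\infty$ by a constant uniformly in $K$, $t$ and $x$, which produces the ``$1$'' on the right-hand side.

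For the second piece I will use the Fréchet differentiability assumed in (A3). Since both $\bar A^K_t$ and $\bar A^\infty_t$ are positive measures, the segment $A_s:=(1-s)\bar A^\infty_t+s\bar A^K_t$, $s\in[0,1]$, stays in $\mathcal M_+(\mathbb T^*)$, so $s\mapsto h^\infty_{A_s}$ is a well-defined $L_\infty$-valued curve. By the chain rule together with (A3), this curve is Fréchet differentiable in $s$ with derivative $\partial_Ah^\infty_{A_s}(\bar A^K_t-\bar A^\infty_t)$. The Banach-space mean value inequality then yields
\[\bigl\|h^\infty_{\bar A^K_t}-h^\infty_{\bar A^\infty_t}\bigr\|_\infty
\le \sup_{s\in[0,1]}\bigl\|\partial_Ah^\infty_{A_s}\bigr\|_{\mathbb L^{-4}}\bigl\|\bar A^K_t-\bar A^\infty_t\bigr\|_{W^{-4}}
\le c\,\bigl\|\bar A^K_t-\bar A^\infty_t\bigr\|_{W^{-4}},\]
invoking the uniform operator bound $\sup_A\|\partial_A h^\infty_A\|_{\mathbb L^{-4}}\le c$ from (A3). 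Since $\bar A^K_t-\bar A^\infty_t=Z^K_t/\sqrt{K}$, multiplying through by $\sqrt{K}$ delivers the desired $c\|Z^K_t\|_{W^{-4}}$, completing the estimate for $h$.

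The proof for $n=b\breve m+h\widehat m$ follows the same scheme after expanding each product, for instance
\[n^K_A-n^\infty_A=(b^K_A-b^\infty_A)\breve m^K_A+b^\infty_A(\breve m^K_A-\breve m^\infty_A)+(h^K_A-h^\infty_A)\widehat m^K_A+h^\infty_A(\widehat m^K_A-\widehat m^\infty_A);\]
each factor is uniformly bounded by (C0), and the $\sqrt{K}$-rate vanishing of each parenthesised difference from (A2) handles the first piece uniformly. For the Fréchet piece, the product rule expresses $\partial_A n^\infty_A$ as a sum of terms in which one factor is uniformly bounded in $L_\infty$ and the other is a derivative uniformly bounded in $\mathbb L^{-4}$, so the segment-integration argument applies verbatim. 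The main conceptual point to watch is the upgrade from the \emph{pointwise} Fréchet property of (A3) to a \emph{global} Lipschitz bound in $W^{-4}$; this reduces to the Banach mean value inequality as above, the one thing to verify being that convex combinations of positive measures stay positive so the segment never leaves the domain where the rates are defined.
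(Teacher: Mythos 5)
Your proof is correct, but it handles the key term differently from the paper. Both proofs open with the same triangle-inequality split, and both control $\sqrt{K}\,\|h^K_{\bar A^K_t}-h^\infty_{\bar A^K_t}\|_\infty$ by the rate condition in (A2). The difference is in the second piece: the paper inserts and subtracts $\partial_A h^\infty_{\bar A^\infty_t}(\bar A^K_t-\bar A^\infty_t)$, producing a three-term bound in which the Fr\'echet remainder is divided by $\|\bar A^K_t-\bar A^\infty_t\|_{W^{-4}}$ and the resulting ratio is asserted to be bounded by (A3), while the linear term is controlled by $\sup_{A_0}\|\partial_A h^\infty_{A_0}\|_{\mathbb{L}^{-4}}\le c$. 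You instead bound $\|h^\infty_{\bar A^K_t}-h^\infty_{\bar A^\infty_t}\|_\infty$ directly by a mean value inequality along the segment of convex combinations, using only the uniform operator-norm bound from (A3). Your route is slightly more economical and, in one respect, more careful: the paper's remainder ratio is only guaranteed by the definition of Fr\'echet differentiability to vanish as $\|B\|_{W^{-4}}\to 0$, so its boundedness for the (not necessarily small) increment $\bar A^K_t-\bar A^\infty_t$ really rests on exactly the global Lipschitz estimate that your mean value argument establishes; you have made that step explicit, including the check that the segment stays in $\mathcal{M}_+(\mathbb{T}^*)$ so the rates remain defined. Your treatment of $n=b\breve m+h\widehat m$ via the telescoping product expansion, with (C0) supplying uniform bounds on the undifferenced factors, is likewise sound and matches what the paper leaves to the reader with the phrase ``the second is similar.''
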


\begin{proof}
We prove only the first inequality, as the second is similar. By the triangle inequality,
\begin{align*}
\big|h^K_{\bar A^K_t} - h^\infty_{\bar A_t}\big|(x)
&\le ||h^K_{\bar A^K_t} - h^\infty_{\bar A^K_t}||_\infty + ||h^\infty_{\bar A^K_t} - h^\infty_{\bar A_t} - \partial_A h^\infty_{\bar A_t}(\bar A^K_t-\bar A_t)||_\infty \\
&\q\q+ ||\partial_A h^\infty_{\bar A_t}||_{\mathbb{L}^{-4}} ||\bar A^K_t-\bar A_t||_{W^{-4}}.
\end{align*}
Multiplying by $\sqrt{K}$ and with some manipulation, we have
\begin{align*}
&\sqrt{K}\big|h^K_{\bar A^K_t} - h^\infty_{\bar A_t}\big|(x)
\le \sqrt{K} \sup_A ||h^K_{A} - h^\infty_{A}||_\infty \\
&\q+ \frac{||Z^K_t||_{W^{-4}}}{||\bar A^K_t-\bar A_t||_{W^{-4}}} ||h^\infty_{\bar A^K_t} - h^\infty_{\bar A_t} - \partial_A h^\infty_{\bar A_t}(\bar A^K_t-\bar A_t)||_\infty
+ c_1 ||Z^K_t||_{W^{-4}},
\end{align*}
where the bound in the last term is due to (A3).
It then follows by (A2) and (A3) that
$\sqrt{K}\big|h^K_{\bar A^K_t} - h^\infty_{\bar A_t}\big|(x)
\le c_2+c_3||Z^K_t||_{W^{-4}}.$
\end{proof}

The following result follows immediately from Proposition \ref{P:ParaBound}.

\begin{proposition} \label{P:KDiffLBnd}
Suppose (A2) and (A3) hold.
For any $f\in W^j$, $j\in\N$, and $t\in \mathbb{T}$,
$$\sup_{x\in\mathbb{T}^*} \big|\sqrt{K}(L^K_{\bar A^K_t}-L^\infty_{\bar A_t}) f\big|(x) \le c (1+ ||Z^K_t||_{W^{-4}}) ||f||_{W^j}.$$
\end{proposition}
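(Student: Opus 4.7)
The plan is to expand $(L^K_{\bar A^K_t}-L^\infty_{\bar A^\infty_t})f$ using the definition \eqref{OpLAK} of $L^K$ (and its analogue for $L^\infty$ from the LLN). Crucially, the derivative term $f'$ is independent of both $K$ and $A$, so it cancels, leaving
\begin{equation*}
(L^K_{\bar A^K_t}-L^\infty_{\bar A^\infty_t})f(x) = -\bigl(h^K_{\bar A^K_t}-h^\infty_{\bar A^\infty_t}\bigr)(x)\, f(x) + f(0)\bigl(n^K_{\bar A^K_t}-n^\infty_{\bar A^\infty_t}\bigr)(x).
\end{equation*}
This reduction is the whole point: the unbounded derivative disappears and we are left with multiplicative perturbations of $f$ by the reproduction/death rate differences.

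Next, I would multiply by $\sqrt K$, take absolute values, and apply the triangle inequality in $x$ to obtain
\begin{equation*}
\sup_{x}\bigl|\sqrt K(L^K_{\bar A^K_t}-L^\infty_{\bar A^\infty_t})f\bigr|(x) \le \|f\|_\infty\sup_x\sqrt K|h^K_{\bar A^K_t}-h^\infty_{\bar A^\infty_t}|(x) + |f(0)|\sup_x\sqrt K|n^K_{\bar A^K_t}-n^\infty_{\bar A^\infty_t}|(x).
\end{equation*}
Each of the two rate-difference suprema is immediately controlled by $c(1+\|Z^K_t\|_{W^{-4}})$ via Proposition \ref{P:ParaBound}, which already packages the first-order Fréchet expansion (A3) together with the uniform $o(1/\sqrt K)$ approximation from (A2).

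Finally, I would handle the $f$-factors with the Sobolev embedding $W^j\hookrightarrow C^0$ (valid for all $j\ge1$, a consequence of $W^{j+1}\hookrightarrow C^j$ applied with $j-1$ in place of $j$, or directly of $W^1\hookrightarrow C^0$ in one dimension). This gives both $|f(x)|$ and $|f(0)|$ bounded by $\|f\|_\infty\le c\|f\|_{W^j}$ uniformly in $x\in\mathbb T^*$. Substituting these bounds back yields
\begin{equation*}
\sup_{x\in\mathbb T^*}\bigl|\sqrt K(L^K_{\bar A^K_t}-L^\infty_{\bar A^\infty_t})f\bigr|(x) \le c(1+\|Z^K_t\|_{W^{-4}})\|f\|_{W^j},
\end{equation*}
which is the claim. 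There is no real obstacle here: the statement is essentially a corollary of Proposition \ref{P:ParaBound}, with the only mild subtlety being to notice that the derivative term vanishes so that no smoothing/integration by parts is needed, and to invoke the correct Sobolev embedding to turn $W^j$-norms into sup-norms.
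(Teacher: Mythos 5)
Your proposal is correct and matches the paper's intent exactly: the paper states that this proposition ``follows immediately from Proposition \ref{P:ParaBound},'' and your argument simply spells out that immediacy --- cancellation of the $f'$ term, the triangle inequality, Proposition \ref{P:ParaBound} for the rate differences, and the embedding $W^j\hookrightarrow C^0$ to bound $|f(x)|$ and $|f(0)|$ by $c\|f\|_{W^j}$. No gaps.
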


As the operator $L^K_A$ maps $W^j$ into $W^{j-1}$, we introduce
$\widehat L^K_Af = - h^K_Af + f(0) n^K_A$, so that $L^K_Af = f' + \widehat L^K_Af$,
and let $\mathcal{L}^{j,k}=L(W^j,W^k)$, the space of linear operators from $W^j$ to $W^k$.

\begin{proposition} \label{P:LKNormBnd}
Suppose (A2) holds. Then,
\begin{gather}
\tag{i} \label{E:LhatKBnd} \sup_{K,A} ||\widehat L^K_A||_{\mathcal{L}^{j,j}} \le c, \q j\le 3; \\
\tag{ii} \label{E:LKBnd} \sup_{K,A} ||L^K_A||_{\mathcal{L}^{j,j-1}} \le c, \q 2\le j\le4.
\end{gather}
\end{proposition}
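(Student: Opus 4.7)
My plan is to establish (i) first by a direct computation, and then derive (ii) as an immediate consequence via the decomposition $L^K_A = \partial_x + \widehat L^K_A$, exploiting the trivial fact that differentiation loses exactly one order of Sobolev regularity.

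For (i), I would fix $j \in \{1,2,3\}$ and control the two summands of $\widehat L^K_A f = -h^K_A f + f(0) n^K_A$ separately. The multiplication term is handled directly by the product inequality \eqref{E:fgNorm}: $||h^K_A f||_{W^j} \le c ||h^K_A||_{C^j} ||f||_{W^j}$, and $||h^K_A||_{C^j}$ is uniformly bounded in $K$ and $A$ by (A2), which provides $C^3$-control and hence covers $j \le 3$. For the rank-one term $f(0) n^K_A$, the Sobolev embedding $W^j \hookrightarrow C^{j-1} \hookrightarrow C^0$ (valid for $j \ge 1$) makes point evaluation at $0$ a continuous functional, so $|f(0)| \le c ||f||_{W^j}$. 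Since $\mathbb{T}^*$ is bounded, $||n^K_A||_{W^j} \le c ||n^K_A||_{C^j}$, and writing $n^K_A = b^K_A \breve m^K_A + h^K_A \widehat m^K_A$ together with the product rule for $C^j$-norms reduces $||n^K_A||_{C^j}$ to a product of $C^j$-norms of $b^K_A$, $h^K_A$ and $m^K_A$, all uniformly bounded for $j \le 3$ by (A2). Summing the two contributions yields (i).

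For (ii), I would fix $j \in \{2,3,4\}$ and decompose $L^K_A f = f' + \widehat L^K_A f$. The derivative part satisfies $||f'||_{W^{j-1}} \le ||f||_{W^j}$ directly from the definition of the Sobolev norm. For the remainder, since $j-1 \in \{1,2,3\}$ lies in the range where (i) applies, combining (i) at level $j-1$ with the embedding $W^j \hookrightarrow W^{j-1}$ gives $||\widehat L^K_A f||_{W^{j-1}} \le c ||f||_{W^{j-1}} \le c ||f||_{W^j}$. Adding the two estimates produces (ii).

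I do not anticipate any serious obstacle: the argument is essentially bookkeeping in Sobolev norms. The only subtlety is the careful matching of regularity indices to the hypotheses --- (A2) furnishes $C^3$-bounds uniformly in $K$, which is precisely what restricts (i) to $j \le 3$ and consequently (ii) to $j \le 4$. The lower bound $j \ge 2$ in (ii) is slightly more conservative than strictly necessary (the same argument works already for $j = 1$), but matches the range used by the forthcoming tightness arguments.
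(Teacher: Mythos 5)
Your proposal is correct and follows essentially the same route as the paper: bound $\widehat L^K_A f$ via the product inequality \eqref{E:fgNorm}, the continuity of point evaluation coming from $W^j\hookrightarrow C^{j-1}$, and the uniform $C^3$-bounds of (A2), then obtain (ii) from the decomposition $L^K_Af=f'+\widehat L^K_Af$ together with $||f'||_{W^{j-1}}\le||f||_{W^j}$ and (i) at level $j-1$. (Only your closing aside is slightly off: the route ``apply (i) at level $j-1$'' does not extend to $j=1$, since point evaluation is not bounded on $W^0=L^2$; one would instead have to absorb $|f(0)|$ into $||f||_{W^1}$ directly.)
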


\begin{proof}
For $f\in W^j$, using triangle inequality and  \eqref{E:fgNorm},
\begin{align*}
||\widehat L^K_A f||_{W^{j}}
&\le ||h^K_A||_{C^{j}}||f||_{W^{j}} + ||f||_{W^{j}} ||n^K_A||_{W^{j}}
\le c_1 ||f||_{W^{j}}
\end{align*}
due to embedding and (A2).
Thus, \eqref{E:LhatKBnd} follows.
For \eqref{E:LKBnd},
$$||L^K_A f||_{W^{j-1}}
= ||f'||_{W^{j-1}} + ||\widehat L^K_Af||_{W^{j-1}}
\le ||f||_{W^{j}} + c_1 ||f||_{W^{j-1}}
\le c_2 ||f||_{W^{j}},$$
by \eqref{E:LhatKBnd} and embedding.
Thus, \eqref{E:LKBnd} follows.
\end{proof}

Recall also the following bounds, obtained in    \cite{HamJagKle13}:
\begin{gather}
(1,\bar A_t) \le (1,\bar A_0)e^{ct} \label{E:1AInfty}, \\
\E[(1,\bar A^K_t)] \le (1,\bar A^K_0)e^{ct} \label{E:1AK}.
\end{gather}

\begin{proof} [Proof of Proposition \ref{P:ZW-2Bound}]
Let $\phi\in W^2$.
We bound each term on the RHS of  \eqref{E:phiZK}, and use repeatedly \eqref{extend}.  For the first term,
$$|(\varTheta_t\phi,Z_0^K)|
\le ||\varTheta_t\phi||_{W^2} ||Z_0^K||_{W^{-2}}
\le c_1 ||\phi||_{W^2} ||Z_0^K||_{W^{-2}}.$$
For the second term, with Proposition \ref{P:KDiffLBnd},
\begin{align*}
\Big| \sqrt{K} \int_0^t \big( (L^K_{\bar A^K_s}-L^\infty_{\bar A_s})\varTheta_{t-s}\phi, \bar A_s \big) ds \Big|
&\le c_2 \int_0^t (1+||Z^K_s||_{W^{-4}}) ||\varTheta_{t-s}\phi||_{W^2}(1,\bar A_s) ds \\
&\le c_3 ||\phi||_{W^2} (1,\bar A_0) e^{c_4t} \int_0^t (1+||Z^K_s||_{W^{-2}}) ds
\end{align*}
by \eqref{E:1AInfty} and the embedding $W^{-2}\hookrightarrow W^{-4}$.
For the third term, by Proposition \ref{P:LKNormBnd}\eqref{E:LhatKBnd},
\begin{multline*}
\Big| \int_0^t \big( -\widehat L^K_{\bar A^K_s}\varTheta_{t-s}\phi , Z^K_s \big) ds \Big|
\le c_5 \int_0^t ||\widehat L^K_{\bar A^K_s}\varTheta_{t-s}\phi||_{W^2} ||Z^K_s||_{W^{-2}} ds \\
\le c_5 \int_0^t ||\widehat L^K_{\bar A^K_s}||_{\mathcal{L}^{2,2}} ||\varTheta_{t-s}\phi||_{W^2} ||Z^K_s||_{W^{-2}} ds
\le c_6||\phi||_{W^2} \int_0^t ||Z^K_s||_{W^{-2}} ds.
\end{multline*}
For the forth term, we write $\int_0^t \varTheta^*_{t-s}d\tilde M^K_s$ for the map
$f\mapsto\int_0^t (\varTheta_{t-s}f, d\tilde M^K_s)$.
Then,
$$\Big| \int_0^t (\varTheta_{t-s}\phi, d\tilde M^K_s) \Big| \le ||\phi||_{W^2} \Big|\Big|\int_0^t (\varTheta_{t-s}^* d\tilde M^K_s)\Big|\Big|_{W^{-2}}.$$
Note that $\big(\int_0^t (\varTheta_{t-s}f, d\tilde M^K_s)\big)_{t\in\mathbb{T}}$ is not a martingale,
but for each fixed $t$, $\big(\int_0^r (\varTheta_{t-s}f, d\tilde M^K_s)\big)_{r\in\T}$ is.
Let $t\in\mathbb{T}$ be fixed.
For $r\le t$,
by the Riesz Representation Theorem and Parseval's Identity,
\begin{gather*}
\E\bigg[ \Big|\Big| \int_0^r \varTheta^*_{t-s}d\tilde M^K_s \Big|\Big|^2_{W^{-2}} \bigg]
= \E\bigg[ \sum_{l\ge1} \Big( \int_0^r (\varTheta_{t-s}p^2_l, d\tilde M^K_s) \Big)^2 \bigg]
= \sum_{l\ge1} \E\bigg[ \Big< \int_0^\cdot (\varTheta_{t-s}p^2_l, d\tilde M^K_s) \Big>_r \bigg] \\
= \sum_{l\ge1} \E\bigg[ \int_0^r \Big( (\varTheta_{t-s}p^2_l(0))^2 w_{\bar A_s^K}^K + h_{\bar A_s^K}^K (\varTheta_{t-s}p^2_l)^2
- 2\varTheta_{t-s}p^2_l(0) h_{\bar A_s^K}^K \widehat m_{\bar A_s^K}^K \varTheta_{t-s}p^2_l , \bar{A}_s^K\Big) ds \bigg].
\end{gather*}
It then follows from  \eqref{E:CONB}, (C0) and \eqref{E:1AK} that
this quantity is bounded by $c_7 (1, \bar{A}_0^K) e^{c_8r}r$.
Taking $r=t$, we have
\begin{equation} \label{E:ShiftMNorm}
\E\bigg[ \Big|\Big| \int_0^t \varTheta^*_{t-s}d\tilde M^K_s \Big|\Big|^2_{W^{-2}} \bigg]
\le c_7 (1, \bar{A}_0^K) e^{c_8t}t.
\end{equation}

Now, putting all together with triangle inequality,
\begin{multline*}
|(\phi,Z^K_t)| \le c_9\Big\{||\phi||_{W^{2}}||Z^K_0||_{W^{-2}} + ||\phi||_{W^{2}} (1,\bar A_0) e^{c_4t} \int_0^t (1+||Z^K_s||_{W^{-2}}) ds \\
+ ||\phi||_{W^{2}} \int_0^t ||Z^K_s||_{W^{-2}} ds\Big\} + ||\phi||_{W^{2}} \Big|\Big| \int_0^t \varTheta^*_{t-s}d\tilde M^K_s \Big|\Big|_{W^{-2}}.
\end{multline*}
This gives a bound to $||Z^K_t||_{W^{-2}}$.
Taking expectation and using \eqref{E:ShiftMNorm}, we have, for $t\le T$,
\begin{multline*}
\E\big[||Z^K_t||_{W^{-2}}\big] \le c_{10}\Big\{||Z^K_0||_{W^{-2}} + (1,\bar A_0) e^{c_4t} t \\
+ \big(1+ (1,\bar A_0) e^{c_4t}\big) \int_0^t \E\big[||Z^K_s||_{W^{-2}}\big] ds + (1, \bar{A}_0^K)^{1/2} e^{c_{11}t}t^{1/2}\Big\}.
\end{multline*}
It follows by Gronwall's inequality that
\begin{align*}
\E\big[||Z^K_t||_{W^{-2}}\big]
\le c_{10}\big\{||Z^K_0||_{W^{-2}} + (1,\bar A_0) e^{c_4T}T + (1, \bar{A}_0^K)^{1/2} e^{c_{11}T}T^{1/2} \big\}
e^{c_{10}(1+ (1,\bar A_0) e^{c_4T})t}.
\end{align*}
Finally, taking supremum over $t$ and $K$, this quantity is finite
due to (A4) and (C3).
\end{proof}
\subsection{Proof of tightness} \label{S:T1T2}

It remains to check the tightness condition (T2), as (T1) holds by Proposition \ref{P:ZW-2Bound}.
The conditions \eqref{T2i} and \eqref{T2ii} are verified in a few steps.
Proceeding from Theorem \ref{T:TightCond}, we let
$$\Lambda^K_t f = \sqrt{K}\big( (L^K_{\bar A^K_t}-L^\infty_{\bar A_t}) f, \bar A_t\big)
+ \big(L^K_{\bar A^K_t}f, Z^K_t\big)$$
and
$$\Gamma^K_t f = \big(f^2(0) w^K_{\bar A^K_t} + h^K_{\bar A^K_t} f^2 - 2f(0) h^K_{\bar A^K_t}\widehat m^K_{\bar A^K_t} f, \bar A^K_t\big).$$

\begin{proposition} \label{P:Lambdaf}
Let $j\in\{3,4\}$. For $f\in W^j$,
$$|\Lambda^K_t f| \le c_1 ||f||_{W^{j}} \big(1+ (1, \bar A_0)e^{c_2t} \big) \big(1+ ||Z^K_t||_{W^{-(j-1)}} \big).$$
\end{proposition}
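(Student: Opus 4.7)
The plan is to split $\Lambda^K_t f$ into its two summands via the triangle inequality and bound each by directly applying Propositions \ref{P:KDiffLBnd} and \ref{P:LKNormBnd} together with the estimate \eqref{E:1AInfty}.

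For the first summand $\sqrt{K}((L^K_{\bar A^K_t}-L^\infty_{\bar A^\infty_t})f,\bar A^\infty_t)$, I would bound the integrand in supremum norm using Proposition \ref{P:KDiffLBnd} (with the exponent $j$ of the present proposition), which gives a pointwise bound of $c(1+\|Z^K_t\|_{W^{-4}})\|f\|_{W^j}$. Pairing this with the positive measure $\bar A^\infty_t$ brings in the total mass $(1,\bar A^\infty_t)$, which by \eqref{E:1AInfty} is dominated by $(1,\bar A^\infty_0)e^{ct}$. Because $j\in\{3,4\}$, we have $j-1\le 4$ and hence the continuous embedding $W^{-(j-1)}\hookrightarrow W^{-4}$, which lets us replace $\|Z^K_t\|_{W^{-4}}$ by $\|Z^K_t\|_{W^{-(j-1)}}$ (at the cost of a constant).

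For the second summand $(L^K_{\bar A^K_t}f,Z^K_t)$, I would use the duality pairing between $W^{j-1}$ and $W^{-(j-1)}$ to obtain
\[
\bigl|(L^K_{\bar A^K_t}f, Z^K_t)\bigr| \le \|L^K_{\bar A^K_t}f\|_{W^{j-1}}\,\|Z^K_t\|_{W^{-(j-1)}},
\]
then invoke Proposition \ref{P:LKNormBnd}\eqref{E:LKBnd} (valid for $2\le j\le 4$) to bound $\|L^K_{\bar A^K_t}f\|_{W^{j-1}}\le c\|f\|_{W^j}$. Adding the two estimates and absorbing multiplicative constants yields the desired factorized form $\|f\|_{W^j}(1+(1,\bar A^\infty_0)e^{c_2 t})(1+\|Z^K_t\|_{W^{-(j-1)}})$.

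There is no real obstacle here: the proposition is essentially a bookkeeping consequence of the operator bounds already established. The only mildly delicate point is ensuring compatibility of Sobolev indices—namely that $j\in\{3,4\}$ sits in the range $2\le j\le 4$ required by Proposition \ref{P:LKNormBnd}\eqref{E:LKBnd}, and that $j-1\le 4$ so that the $W^{-4}$-control from Proposition \ref{P:KDiffLBnd} can be upgraded to $W^{-(j-1)}$-control via embedding. Both are satisfied for $j\in\{3,4\}$, so the estimate goes through cleanly.
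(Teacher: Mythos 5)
Your proposal is correct and follows essentially the same route as the paper: split $\Lambda^K_t f$ by the triangle inequality, bound the first term pointwise via Proposition \ref{P:KDiffLBnd} together with \eqref{E:1AInfty} and the embedding into $W^{-4}$, and bound the second term by duality with Proposition \ref{P:LKNormBnd}\eqref{E:LKBnd}. The index bookkeeping you flag is exactly the only point of care in the paper's argument as well.
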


\begin{proof}
For $f\in W^j$, we have $L^K_{A^K_t}f \in W^{j-1}$ and
\begin{align*}
|\Lambda^K_t f|
&\le \big| \sqrt{K}\big( (L^K_{\bar A^K_t}-L^\infty_{\bar A_t}) f, \bar A_t\big) \big|
+ \big| \big(L^K_{\bar A^K_t}f, Z^K_t\big) \big| \\
&\le \big(| \sqrt{K}(L^K_{\bar A^K_t}-L^\infty_{\bar A_t}) f|, \bar A_t\big)
+ \big|\big| L_{\bar A_t^K}^K \big|\big|_{\mathcal{L}^{j,j-1}} ||f||_{W^j} ||Z_t^K||_{W^{-(j-1)}} \\
&\le c_1 \big(1+||Z^K_t||_{W^{-4}}\big) ||f||_{W^{j}} (1, \bar A_t)
+ c_2 ||f||_{W^j} ||Z_t^K||_{W^{-(j-1)}}
\end{align*}
due to Propositions \ref{P:KDiffLBnd} and \ref{P:LKNormBnd}.
Then, by \eqref{E:1AInfty} and the embedding $W^{-(j-1)}\hookrightarrow W^{-j}$,
$$|\Lambda^K_t f| \le c_3 \big(1+||Z^K_t||_{W^{-(j-1)}}\big) ||f||_{W^{j}} (1, \bar A_0)e^{c_4t}
+ c_2 ||f||_{W^{j}} ||Z_t^K||_{W^{-(j-1)}}.$$
The statement now follows by simple algebra.
\end{proof}

\begin{proposition} \label{P:SumGampl}
$$\Big|\sum_{l\ge1} \Gamma^K_t p^j_l\Big| \le c_1(1, \bar A^K_t).$$
\end{proposition}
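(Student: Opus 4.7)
The plan is to substitute the definition of $\Gamma^K_t$ into the sum over the orthonormal basis, exchange the sum with the integration against $\bar A^K_t$, and then apply the pointwise bound \eqref{E:CONB} on the reproducing-kernel sums together with the uniform boundedness of the rate parameters.

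More precisely, I would first write
\begin{align*}
\sum_{l\ge1} \Gamma^K_t p^j_l
&= \int_{\mathbb{T}^*} \Big[ w^K_{\bar A^K_t}(x)\sum_{l\ge1}(p^j_l(0))^2 + h^K_{\bar A^K_t}(x)\sum_{l\ge1}(p^j_l(x))^2 \\
&\qquad\qquad - 2\, h^K_{\bar A^K_t}(x)\widehat m^K_{\bar A^K_t}(x) \sum_{l\ge1} p^j_l(0)\,p^j_l(x) \Big]\,\bar A^K_t(dx).
\end{align*}
The interchange of the (possibly signed) sum with the integral is justified because, by \eqref{E:CONB}, the partial sums of $\sum_l p^j_l(x_1)p^j_l(x_2)$ are uniformly bounded in $(x_1,x_2)$, so dominated convergence applies with the dominating function a constant times $\bar A^K_t$, which is a finite measure on the bounded domain $\mathbb{T}^*$.

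Next, specialising \eqref{E:CONB} to $(x_1,x_2)=(0,0)$, $(x,x)$, and $(0,x)$, each of the three kernel sums is bounded by a constant independent of $x$. By (C0) and (A0), the rate parameters $w^K_{\bar A^K_t}$, $h^K_{\bar A^K_t}$, $\widehat m^K_{\bar A^K_t}$ are all bounded uniformly in $K$, $A$ and $x$. Hence the bracketed integrand in the display above is bounded by a constant $c_1$, giving
\[
\Big|\sum_{l\ge1} \Gamma^K_t p^j_l\Big| \le c_1 \int_{\mathbb{T}^*} \bar A^K_t(dx) = c_1 (1,\bar A^K_t),
\]
as required.

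There is no real obstacle here; the only subtle point is invoking dominated (rather than monotone) convergence for the cross-term $-2f(0)h^K \widehat m^K f$, since it is not sign-definite, but the uniform kernel bound \eqref{E:CONB} provides the needed domination.
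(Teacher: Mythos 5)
Your proof is correct and follows exactly the route the paper intends: the paper's own proof is the single line ``This follows directly from (C0) and \eqref{E:CONB}'', and your argument simply fills in the details (substituting the definition of $\Gamma^K_t$, interchanging sum and integral, and evaluating the kernel bound at $(0,0)$, $(x,x)$ and $(0,x)$). The only cosmetic remark is that the uniform boundedness of $\breve v$ and $\widehat v$, hence of $w^K$, is already part of (C0), so the appeal to (A0) is not needed.
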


\begin{proof}
This follows directly from (C0) and  \eqref{E:CONB}.
\end{proof}

\begin{proposition} \label{P:ZW-3Bound}
$$\sup_{K\ge1} \mathbb{E} \Big[ \sup_{t\le T} ||Z^K_t||_{W^{-3}} \Big] < \infty.$$
\end{proposition}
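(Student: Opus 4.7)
The plan is to mirror the proof of Proposition \ref{P:ZW-2Bound}, but one rung higher in the Sobolev scale and with the supremum-in-$t$ estimate for the martingale obtained via Doob's inequality. The key structural observation is that, by Proposition \ref{P:LKNormBnd}\eqref{E:LKBnd} with $j=3$, the operator $L^K_{\bar A^K_s}$ maps $W^3$ continuously into $W^2$, uniformly in $K$ and $s$. Consequently, we can work directly with the \emph{unshifted} representation \eqref{E:fZK} with test functions $\phi\in W^3$: the pairing $(L^K_{\bar A^K_s}\phi, Z^K_s)$ is dominated by $c\|\phi\|_{W^3}\|Z^K_s\|_{W^{-2}}$, and the $W^{-2}$-norm of $Z^K_s$ has already been controlled in expectation by Proposition \ref{P:ZW-2Bound}. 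The shift trick $\varTheta_{t-s}\phi$ is no longer needed, which is precisely what allows a clean interchange of expectation and supremum-in-$t$.

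First I would bound each term on the right of \eqref{E:fZK} applied to $\phi\in W^3$. The term $\sqrt K\int_0^t((L^K_{\bar A^K_s}-L^\infty_{\bar A^\infty_s})\phi,\bar A^\infty_s)\,ds$ is controlled using Proposition \ref{P:KDiffLBnd} with $j=3$, the embedding $W^{-2}\hookrightarrow W^{-4}$, and \eqref{E:1AInfty}; the term $\int_0^t(L^K_{\bar A^K_s}\phi,Z^K_s)\,ds$ is controlled by Proposition \ref{P:LKNormBnd}\eqref{E:LKBnd}. Both bounds are linear in $\|\phi\|_{W^3}$ and their time integrands depend only on $\|Z^K_s\|_{W^{-2}}$ and $(1,\bar A^\infty_s)$. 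Dividing by $\|\phi\|_{W^3}$, taking supremum over $\phi$ (using duality $W^{-3}=(W^3)^*$), and then $\sup_{t\le T}$ (the integrals are already monotone in $t$) gives
\[
\sup_{t\le T}\|Z^K_t\|_{W^{-3}} \le \|Z^K_0\|_{W^{-3}}+c\!\int_0^T\!(1+\|Z^K_s\|_{W^{-2}})(1+(1,\bar A^\infty_s))\,ds + \sup_{t\le T}\|\tilde M^K_t\|_{W^{-3}},
\]
where $\tilde M^K_t\in W^{-3}$ denotes the Hilbert-valued martingale characterised by $(\phi,\tilde M^K_t)=\tilde M^{\phi,K}_t$. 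Taking expectation, the first two terms on the right are uniformly bounded in $K$ by (A4) combined with the embedding $W^{-2}\hookrightarrow W^{-3}$, by Proposition \ref{P:ZW-2Bound}, and by \eqref{E:1AInfty}.

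The main obstacle, and the only step requiring genuinely new input, is the uniform bound on $\E[\sup_{t\le T}\|\tilde M^K_t\|_{W^{-3}}]$. I would invoke Doob's $L^2$-maximal inequality applied to the real-valued submartingale $\|\tilde M^K_t\|_{W^{-3}}^2=\sum_l(\tilde M^{p^3_l,K}_t)^2$ (which is well defined as a sum of nonnegative submartingales), followed by Jensen, to obtain
\[
\E\!\Bigl[\sup_{t\le T}\|\tilde M^K_t\|_{W^{-3}}\Bigr]\le 2\,\Bigl(\E[\|\tilde M^K_T\|_{W^{-3}}^2]\Bigr)^{1/2}=2\,\Bigl(\sum_l\E[\langle \tilde M^{p^3_l,K}\rangle_T]\Bigr)^{1/2}.
\]
By \eqref{QVMGZt} and Fubini, $\sum_l\E[\langle \tilde M^{p^3_l,K}\rangle_T]=\E[\int_0^T\sum_l\Gamma^K_s p^3_l\,ds]$, which is bounded uniformly in $K$ by Proposition \ref{P:SumGampl} together with \eqref{E:1AK} and (C3). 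Combining these estimates yields the desired uniform bound on $\E[\sup_{t\le T}\|Z^K_t\|_{W^{-3}}]$; no Gronwall argument is needed at this step since $\E[\|Z^K_s\|_{W^{-2}}]$ is already bounded uniformly in $s$ and $K$.
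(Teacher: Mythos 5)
Your proposal is correct and follows essentially the same route as the paper: both work with the unshifted representation \eqref{E:fZK} for $\phi\in W^3$ (equivalently, the bound of Proposition \ref{P:Lambdaf} with $j=3$), exploit that $L^K_{\bar A^K_s}$ maps $W^3$ to $W^2$ so that Proposition \ref{P:ZW-2Bound} controls the drift without Gronwall, and handle the martingale via Parseval, Doob's $L^2$ inequality and Proposition \ref{P:SumGampl} with \eqref{E:1AK}. The only cosmetic difference is that the paper applies Doob componentwise after interchanging $\sup_t$ and $\sum_l$, whereas you apply it directly to the submartingale $\|\tilde M^K_t\|_{W^{-3}}$; both are valid.
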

\begin{proof}
Let $f\in W^{3}$.
Using Proposition \ref{P:Lambdaf}, we have
\begin{align*}
&|(f,Z^K_t)|
\le ||f||_{W^{3}} ||Z^K_0||_{W^{-3}} \\
&\q+ c_1 ||f||_{W^{3}} \big(1+ (1, \bar A_0)e^{c_2t} \big) \int_0^t  \big(1+ ||Z^K_s||_{W^{-2}} \big) ds
+ ||f||_{W^{3}} ||\tilde{M}_t^{K}||_{W^{-3}}.
\end{align*}
This gives a bound to $||Z^K_t||_{W^{-3}}$ and consequently,
\begin{multline} \label{E:supZKW-3}
\sup_{t\le T} ||Z^K_t||_{W^{-3}} \le ||Z^K_0||_{W^{-3}} \\
+ c_1 \big(1+ (1, \bar A_0)e^{c_2T} \big) \int_0^T  \big(1+ ||Z^K_s||_{W^{-2}} \big) ds + \sup_{t\le T} ||\tilde{M}_t^{K}||_{W^{-3}}.
\end{multline}
Now, by the Riesz Representation Theorem and Parseval's Identity, we have
\begin{multline*}
\E\bigg[ \sup_{t\le T} ||\tilde{M}_t^{K}||^2_{W^{-3}} \bigg]
= \E\bigg[ \sup_{t\le T} \sum_{l\ge1} (\tilde M^{p^3_l,K}_t)^2 \bigg] \\
\le \E\bigg[ \sum_{l\ge1} \sup_{t\le T} (\tilde M^{p^3_l,K}_t)^2 \bigg]
\le 4 \sum_{l\ge1} \E\Big[ \big<\tilde M^{p^3_l,K}\big>_T \Big]
\end{multline*}
using Doob's inequality.
It then follows by Proposition \ref{P:SumGampl} and inequality \eqref{E:1AK} that
$$\E\bigg[ \sup_{t\le T} ||\tilde{M}_t^{K}||^2_{W^{-3}} \bigg]
\le 4 \sum_{l\ge1} \E\bigg[ \int_0^T \Gamma^K_s p^3_l ds \bigg]
\le c_5(1, \bar A^K_0) e^{c_6T}T.$$
Therefore, taking expectation in \eqref{E:supZKW-3}, we obtain
\begin{multline} \label{E:WhyCantGoLower}
\E\bigg[\sup_{t\le T} ||Z^K_t||_{W^{-3}}\bigg]
\le c_7 \bigg\{ ||Z^K_0||_{W^{-3}} \\
\q+ \big(1+ (1, \bar A_0)e^{c_2T} \big) \int_0^T  \Big(1+ \E\big[||Z^K_s||_{W^{-2}}\big] \Big) ds
+ (1, \bar A^K_0)^{1/2} e^{c_8T}T^{1/2} \bigg\}.
\end{multline}
Noting that $\E[ ||Z^K_s||_{W^{-2}}]$ is bounded by Proposition \ref{P:ZW-2Bound},
and using (A4) and (C3), complete the proof.
\end{proof}

\begin{proposition} \label{P:CheckT2}
Conditions \eqref{T2i} and \eqref{T2ii} of (T2) hold for $W^{-4}$, namely
\begin{gather}
\sup_{K\ge1} \E \bigg[ \sup_{t\le T} ||\Lambda^K_t||_{W^{-4}} \bigg] \le c,
\tag{i} \\
\sup_{K\ge1} \E \bigg[ \sup_{t\le T} \Big|\sum_{l\ge1} \Gamma^K_t p^4_l\Big| \bigg] \le c.
\tag{ii}
\end{gather}
\end{proposition}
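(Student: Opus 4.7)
The plan is to verify each of the two bounds by combining the pointwise bounds already established in Propositions \ref{P:Lambdaf} and \ref{P:SumGampl} with the uniform integrability estimates of Propositions \ref{P:ZW-2Bound} and \ref{P:ZW-3Bound}.

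For part (i), I would apply Proposition \ref{P:Lambdaf} with $j=4$. This gives, for any $f\in W^4$,
$$|\Lambda^K_t f| \le c_1 \|f\|_{W^4}\bigl(1+(1,\bar A^\infty_0)e^{c_2 t}\bigr)\bigl(1+\|Z^K_t\|_{W^{-3}}\bigr).$$
Taking the supremum over $\|f\|_{W^4}\le 1$ yields a bound on $\|\Lambda^K_t\|_{W^{-4}}$. Since the factor $1+(1,\bar A^\infty_0)e^{c_2 t}$ is deterministic and bounded by a constant depending only on $T$ (using (C3)), taking $\sup_{t\le T}$ and then expectation reduces the statement to showing $\sup_K \E[\sup_{t\le T}\|Z^K_t\|_{W^{-3}}]<\infty$, which is exactly Proposition \ref{P:ZW-3Bound}.

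For part (ii), Proposition \ref{P:SumGampl} applied with $j=4$ gives
$$\sup_{t\le T}\Bigl|\sum_{l\ge 1}\Gamma^K_t p^4_l\Bigr| \le c_1 \sup_{t\le T}(1,\bar A^K_t),$$
so it suffices to control $\E[\sup_{t\le T}(1,\bar A^K_t)]$ uniformly in $K$. This is the main obstacle, because the bound \eqref{E:1AK} available from \cite{HamJagKle13} only controls $\E[(1,\bar A^K_t)]$ pointwise in $t$, not the supremum. To address this, I would use the accounting identity $(1,\bar A^K_t) = (1,\bar A^K_0) + K^{-1}(B^K(t)-D^K(t))$, which since $D^K(t)\ge 0$ implies
$$\sup_{t\le T}(1,\bar A^K_t) \le (1,\bar A^K_0) + \frac{1}{K}B^K(T).$$
Using the compensator of $B^K$ together with (C0) and \eqref{E:1AK},
$$\E\bigl[K^{-1}B^K(T)\bigr] \le (1,\bar A^K_0)+c\int_0^T \E[(1,\bar A^K_s)]\,ds \le c'(1,\bar A^K_0)e^{c T},$$
and (C3) then yields the desired uniform bound.

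Putting the two halves together gives (T2)(i) and (T2)(ii), completing the verification of the Aldous-Rebolledo conditions from Theorem \ref{T:TightCond} in $W^{-4}$. The calculations are routine once the bookkeeping for the birth process is in place; the only non-mechanical point is the control of the running supremum of $(1,\bar A^K_t)$, which cannot be done directly from Gronwall's inequality on the expectation and instead requires the monotone birth-counting argument above.
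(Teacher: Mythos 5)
Your proposal is correct, and part (i) is essentially identical to the paper's argument: Proposition \ref{P:Lambdaf} with $j=4$ plus Proposition \ref{P:ZW-3Bound}. For part (ii), however, you take a genuinely different route to the key estimate $\sup_K\E[\sup_{t\le T}(1,\bar A^K_t)]<\infty$. The paper works from the semimartingale decomposition $(1,\bar A^K_t)\le(1,\bar A^K_0)+c\int_0^t(1,\bar A^K_s)\,ds+K^{-1/2}\tilde M^{1,K}_t$, applies Gronwall's inequality to $S\mapsto\E[\sup_{t\le S}(1,\bar A^K_t)]$, and controls the martingale's running supremum by Doob's maximal inequality together with the quadratic variation bound $\E[\langle\tilde M^{1,K}\rangle_T]\le c(1,\bar A^K_0)e^{cT}$. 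You instead exploit the branching structure directly: since the death count is nonnegative and the birth count is nondecreasing, $\sup_{t\le T}(1,\bar A^K_t)\le(1,\bar A^K_0)+K^{-1}B^K(T)$, and the latter is handled by a first-moment computation via the compensator of $B^K$, (C0) and \eqref{E:1AK}. Both arguments are valid. Yours is more elementary --- it needs only first moments and no maximal inequality --- but relies on the specific monotone bookkeeping of births and deaths; the paper's is slightly heavier but is the generic pattern that transfers to any semimartingale with controlled drift and bracket (indeed it is the same pattern used elsewhere in Section \ref{S:T1T2}). One small caveat: your closing remark that the running supremum ``cannot be done directly from Gronwall's inequality on the expectation'' is a little too strong --- the paper does use Gronwall, just applied to the expectation of the running supremum combined with Doob's inequality; what is true is that the pointwise bound \eqref{E:1AK} alone does not suffice.
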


\begin{proof}
From Proposition \ref{P:Lambdaf} with $j=4$, we have
$$||\Lambda^K_t||_{W^{-4}} \le c_1 \big(1+ (1, \bar A_0)e^{c_2t} \big) \big(1+ ||Z^K_t||_{W^{-3}} \big).$$
Taking supremum over $t\le T$ and expectation, we have
$$\E \bigg[ \sup_{t\le T} ||\Lambda^K_t||_{W^{-4}} \bigg]
\le c_1 \big(1+ (1, \bar A_0)e^{c_2T} \big) \bigg(1+ \E \bigg[ \sup_{t\le T}||Z^K_t||_{W^{-3}} \bigg] \bigg),$$
which is bounded in $K$ by Proposition \ref{P:ZW-3Bound}.
Thus, condition \eqref{T2i} holds.

Now we verify condition \eqref{T2ii}.
From Proposition \ref{P:SumGampl},
$$\E \bigg[ \sup_{t\le T} \Big|\sum_{l\ge1} \Gamma^K_t p^4_l\Big| \bigg]
\le c_3\E \bigg[ \sup_{t\le T}(1, \bar A^K_t) \bigg].$$
But,
$$(1,\bar A^K_t) \le (1,\bar A^K_0) + c_4 \int_0^t (1,\bar A^K_s) ds + \frac{1}{\sqrt{K}} \tilde M^{1,K}_t$$
and for $S\le T$,
$$\E\bigg[ \sup_{t\le S} (1,\bar A^K_t) \bigg]
\le (1,\bar A^K_0) + c_4 \int_0^S \E\bigg[ \sup_{u\le s} (1,\bar A^K_u)\bigg] ds + \frac{1}{\sqrt{K}}\E\bigg[ \sup_{t\le T} \tilde M^{1,K}_t \bigg].$$
It follows by Gronwall's inequality that
$$\E\bigg[ \sup_{t\le T} (1,\bar A^K_t) \bigg]
\le \bigg\{ (1,\bar A^K_0) + \frac{1}{\sqrt{K}} \E\bigg[ \sup_{t\le T} \tilde M^{1,K}_t \bigg] \bigg\} e^{c_4T},$$
where by Doob's inequality,
\begin{align*}
\E\bigg[ \sup_{t\le T} \tilde M^{1,K}_t \bigg]^2
&\le \E\bigg[ \sup_{t\le T} \big(\tilde M^{1,K}_t\big)^2 \bigg]
\le 4 \E\big[ \big< \tilde M^{1,K} \big>_T\big]
\le c_5(1,\bar A^K_0) e^{c_6T}.
\end{align*}
Therefore, condition \eqref{T2ii} follows, using (C3).
\end{proof}

\begin{corollary}
Both sequences $Z^K$ and $\tilde M^K$ are tight in $\D(\mathbb{T},W^{-4})$.
\end{corollary}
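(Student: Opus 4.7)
The plan is to apply Theorem \ref{T:TightCond} to each of the two sequences, in space $W^{-4}$, using the work already done in Sections establishing the bounds on $Z^K$ and its martingale part.

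First, for $Z^K$ there is essentially nothing left to do: condition (T1) is exactly Proposition \ref{P:ZW-2Bound} (take $i=2<j=4$, so that $W^{-2}\underset{H.S.}{\hookrightarrow}W^{-4}$), and condition (T2) in its two forms \eqref{T2i}, \eqref{T2ii} is exactly Proposition \ref{P:CheckT2}. Theorem \ref{T:TightCond} then yields tightness of $(Z^K)_{K\ge1}$ in $\mathbb{D}(\mathbb{T},W^{-4})$.

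For $\tilde M^K$, I would view it in the framework of Theorem \ref{T:TightCond} as a c\`adl\`ag $W^{-4}$-valued process with trivial finite variation part, so that $\Lambda^K\equiv 0$ and $\Gamma^K$ is exactly the functional appearing in \eqref{QVMGZt}. Then \eqref{T2i} is automatic, and \eqref{T2ii} is precisely the bound in Proposition \ref{P:SumGampl} combined with (C3) and \eqref{E:1AK}, exactly as invoked in the second half of the proof of Proposition \ref{P:CheckT2}. For (T1) I would take $i=3$: by the Riesz Representation Theorem, Parseval's identity, Doob's inequality, Proposition \ref{P:SumGampl}, and \eqref{E:1AK},
\begin{equation*}
\E\Bigl[\sup_{t\le T}\|\tilde M^K_t\|_{W^{-3}}^2\Bigr]
\;\le\;4\sum_{l\ge1}\E\bigl[\langle\tilde M^{p^3_l,K}\rangle_T\bigr]
\;\le\;c\,(1,\bar A^K_0)\,e^{cT}T,
\end{equation*}
which is uniformly bounded in $K$ by (C3). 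This is the same computation that already appears inside the proof of Proposition \ref{P:ZW-3Bound}, so no new work is required.

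There is no real obstacle here; the only point requiring a little care is to check that $\tilde M^K$ genuinely takes values in $W^{-4}$ (equivalently, that the series $\sum_l\tilde M^{p^4_l,K}_t$ represents a $W^{-4}$-valued process), which is ensured by the finiteness of $\sum_l\E\langle\tilde M^{p^4_l,K}\rangle_T$ via Proposition \ref{P:SumGampl}. Once this is in place, Theorem \ref{T:TightCond} applied separately to $Z^K$ and to $\tilde M^K$ delivers the conclusion.
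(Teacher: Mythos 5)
Your proposal is correct and follows exactly the route the paper intends (the corollary is stated without proof precisely because Theorem \ref{T:TightCond} together with Propositions \ref{P:ZW-2Bound} and \ref{P:CheckT2} gives tightness of $Z^K$ immediately, and the martingale case reduces to the same bounds with $\Lambda^K\equiv0$, the $W^{-3}$ moment estimate already appearing inside the proof of Proposition \ref{P:ZW-3Bound}). No gaps.
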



\subsection{C-tightness of $Z^K$ and $\tilde M^K$} \label{S:CTight}

It can be further shown that $Z^K$ and $\tilde M^K$ are C-tight,
that is, the two sequences are tight and all limit points of the sequences are continuous.

\begin{proposition} \label{P:ZKCtight}
The sequence $Z^K$ is C-tight and all limit points of $Z^K$ are elements of $\mathbb{C}(\mathbb{T},W^{-4})$.
\end{proposition}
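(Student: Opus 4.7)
The plan is to leverage the tightness of $Z^K$ in $\mathbb{D}(\mathbb{T},W^{-4})$ already established via Proposition \ref{P:CheckT2} and Theorem \ref{T:TightCond}. For a tight sequence of c\`adl\`ag processes in a separable Hilbert space, C-tightness is equivalent (by the standard Jacod--Shiryaev criterion) to
$$\sup_{t\le T}||Z^K_t - Z^K_{t-}||_{W^{-4}} \longrightarrow 0 \text{ in probability},$$
and this immediately forces every subsequential limit to lie in $\mathbb{C}(\mathbb{T},W^{-4})$. So the whole task reduces to bounding the maximum jump size.

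First I would isolate the jumps. Because $\bar A^\infty$ is deterministic and continuous (it solves the LLN equation \eqref{E:LLN}) and the finite-variation part of the representation \eqref{E:fZK} is absolutely continuous in $t$, the only jumps of $Z^K$ are those of the martingale $\tilde M^{K}$, which occur at the birth and death instants of the branching system. At a bearing by a living mother producing $\breve\xi$ offspring, $\Delta A^K = \breve\xi\,\delta_0$; at a death by splitting of an individual of age $x$ producing $\widehat\xi$ offspring, $\Delta A^K = \widehat\xi\,\delta_0 - \delta_x$. Using the continuous embedding $W^4\hookrightarrow C^0(\mathbb{T}^*)$, Dirac masses satisfy $||\delta_y||_{W^{-4}}\le c$ uniformly in $y\in\mathbb{T}^*$, so that at the $i$-th event
$$||\Delta Z^K_t||_{W^{-4}} = \tfrac{1}{\sqrt K}||\Delta A^K_t||_{W^{-4}} \le \frac{c(\Xi_i+1)}{\sqrt K},$$
where each $\Xi_i$ is (conditionally stochastically) dominated by the random variable $\Xi$ from (A1).

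Next I would control the supremum by truncating the number of event times $N^K$ in $[0,T]$. Combining the uniform boundedness of $b^K$ and $h^K$ from (C0) with \eqref{E:1AK}, one obtains $\E[N^K]\le cK$, so by Markov's inequality $N^K/K$ is tight. For any $\epsilon,\eta>0$, choose $M$ with $\sup_K\P(N^K>MK)<\eta$; a union bound over the (at most $MK$) events then yields
$$\P\Big(\sup_{t\le T}||\Delta Z^K_t||_{W^{-4}}>\epsilon\Big) \le \eta + MK\cdot\P\big(\Xi > c^{-1}\epsilon\sqrt K - 1\big).$$

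The step I expect to require the most care is the final one: proving $K\cdot\P(\Xi>\epsilon\sqrt K)\to 0$, which is the only place where the finite second moment of $\Xi$ (rather than mere integrability) genuinely enters. The crucial fact is the elementary inequality $x^{2}\P(\Xi>x)\le\E[\Xi^{2}\mathbf{1}_{\Xi>x}]$, whose right-hand side tends to zero as $x\to\infty$ by dominated convergence since $\E[\Xi^{2}]<\infty$. Applied with $x=c^{-1}\epsilon\sqrt K-1$, this makes the second term in the previous display vanish as $K\to\infty$ for each fixed $M$. Letting $\eta\downarrow 0$ (via $M\uparrow\infty$) completes the proof of C-tightness of $Z^K$, with all limit points automatically belonging to $\mathbb{C}(\mathbb{T},W^{-4})$.
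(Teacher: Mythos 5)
Your proposal is correct, but it follows a genuinely different route from the paper's. The paper reads (A1) as \emph{pathwise} domination: every offspring variable is bounded above by one and the same random variable $\Xi$ on the underlying probability space, so that $\sup_{t\le u}\|\Delta Z^K_t\|_{W^{-4}}\le (1+\Xi)/\sqrt K$ holds almost surely, and a single application of Markov's inequality with $\E[\Xi]<\infty$ finishes the proof in two lines (the bound $|(f,\Delta A^K_t)|\le\max\{\breve\xi|f(0)|,|\widehat\xi f(0)-f(x)|\}\le \|f\|_{W^4}(1+\Xi)$ plays the role of your $\|\delta_y\|_{W^{-4}}\le c$ estimate). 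You instead read (A1) as conditional \emph{stochastic} domination, which forces you to control the number of jump times via $\E[N^K]\le cK$ (using (C0), \eqref{E:1AK} and (C3)), truncate at $N^K\le MK$, and run a union bound whose tail term $MK\,\P(\Xi>c^{-1}\epsilon\sqrt K-1)$ is killed by $x^2\P(\Xi>x)\le\E[\Xi^2\mathbf{1}_{\Xi>x}]\to0$. What each approach buys: the paper's is shorter but hinges on the stronger, pathwise reading of (A1) and never actually uses the finite variance that (A1) postulates; yours is more robust, valid under the weaker domination hypothesis, and explains why (A1) asks for a second moment rather than mere integrability. All the individual steps you flag (the identification of the jumps with those of $\tilde M^K$, the uniform bound on $\|\delta_y\|_{W^{-4}}$ from $W^4\hookrightarrow C^0$ on the bounded domain $\mathbb{T}^*$, and the expected-number-of-events bound) check out, so I see no gap.
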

\begin{proof}
We have established that $Z^K$ is tight,
it remains to show that (see e.g. \cite[Proposition VI 3.26(iii)]{JacShi03}),
for all $u\in\mathbb{T}$ and $\epsilon>0$,
$$\lim_{K\to\infty}\mathbb{P}\Big(\sup_{t\le u} ||\Delta Z^K_t||_{W^{-4}} >\epsilon\Big) =0.$$
Observe that $Z^K$ jumps when $A^K$ jumps, which occurs when there is a birth or a death.
Thus, for $f\in W^{4}$, we have
\begin{align*}
|(f, \Delta Z^K_t)| &= |(f,Z^K_t-Z^K_{t-})| = \frac{1}{\sqrt{K}} |(f,A^K_t-A^K_{t-})| \\
&\le \frac{1}{\sqrt{K}} \max\Big\{ \sup_{x\in\mathbb{T}^*} \widecheck\xi^K_{A^K_t}(x) |f(0)| , \sup_{x\in\mathbb{T}^*} |\widehat\xi^K_{A^K_t}(x) f(0) - f(x)| \Big\} \\
&\le \frac{1}{\sqrt{K}}||f||_{W^{4}} (1+\Xi)
\end{align*}
by (A1),
giving $||\Delta Z^K_t||_{W^{-4}} \le \frac{1}{\sqrt{K}}(1+\Xi)$. Hence,
$$\mathbb{P}\Big(\sup_{t\le u} ||\Delta Z^K_t||_{W^{-4}} >\epsilon\Big)
\le \frac1\epsilon \mathbb{E}\Big[ \sup_{t\le u} ||\Delta Z^K_t||_{W^{-4}} \Big]
\le \frac1\epsilon \frac{1}{\sqrt{K}} \big(1+ \mathbb{E}[\Xi]\big),$$
which converges to zero as $K$ tends to infinity.
\end{proof}

\begin{corollary} \label{C:MKCtight}
The sequence of martingales $\tilde M^K$ is C-tight and
all limit points of $\tilde M^K$ are elements of $\mathbb{C}(\mathbb{T},W^{-4})$.
\end{corollary}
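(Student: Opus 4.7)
The plan is to mimic the argument already used for $Z^K$ in Proposition \ref{P:ZKCtight}, exploiting the semimartingale decomposition to transfer the jump estimate from $Z^K$ to $\tilde M^K$. Concretely, from \eqref{E:fZK} we have
\begin{equation*}
\tilde M^K_t = Z^K_t - Z^K_0 - \int_0^t \Lambda^K_s\, ds,
\end{equation*}
and the finite-variation part $V^K_t := \int_0^t \Lambda^K_s\, ds$ is absolutely continuous in $t$. Therefore $V^K$ has no jumps, and the jumps of $\tilde M^K$ coincide with those of $Z^K$:
\begin{equation*}
\Delta \tilde M^K_t = \Delta Z^K_t.
\end{equation*}

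First I would invoke the previous corollary to note that $\tilde M^K$ is already tight in $\mathbb{D}(\mathbb{T},W^{-4})$, so only the control of jumps remains. By the criterion \cite[Proposition VI.3.26(iii)]{JacShi03}, C-tightness follows once one shows that for every $u\in \mathbb{T}$ and $\epsilon>0$,
\begin{equation*}
\lim_{K\to\infty}\mathbb{P}\bigl(\sup_{t\le u}\|\Delta \tilde M^K_t\|_{W^{-4}}>\epsilon\bigr)=0.
\end{equation*}
Using the identity $\Delta \tilde M^K_t = \Delta Z^K_t$ together with the bound
$\|\Delta Z^K_t\|_{W^{-4}}\le \frac{1}{\sqrt K}(1+\Xi)$
already derived inside the proof of Proposition \ref{P:ZKCtight}, Markov's inequality gives
\begin{equation*}
\mathbb{P}\bigl(\sup_{t\le u}\|\Delta \tilde M^K_t\|_{W^{-4}}>\epsilon\bigr)\le \frac{1}{\epsilon\sqrt K}\bigl(1+\mathbb{E}[\Xi]\bigr)\xrightarrow[K\to\infty]{} 0,
\end{equation*}
since $\mathbb{E}[\Xi]<\infty$ by (A1).

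There is essentially no obstacle here: the only subtlety worth stating explicitly is why $V^K$ contributes no jumps, which is immediate from its representation as a Lebesgue integral of the (locally bounded) functional $\Lambda^K_s$. Finally, since every limit point of $\tilde M^K$ in $\mathbb{D}(\mathbb{T},W^{-4})$ must therefore have continuous sample paths, the limit points lie in $\mathbb{C}(\mathbb{T},W^{-4})$, completing the corollary.
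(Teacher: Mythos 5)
Your proof is correct and follows the same route as the paper: both arguments rest on the observation that the finite-variation part of the decomposition is continuous, so $\Delta \tilde M^K_t = \Delta Z^K_t$, and the jump bound from Proposition \ref{P:ZKCtight} then transfers directly. You simply spell out the details (tightness from the preceding corollary, the Markov inequality step) that the paper leaves implicit.
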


\begin{proof}
As $Z^K$ and $\tilde M^K$ have the same discontinuities,
$\Delta Z^K_t = \Delta \tilde M^K_t$ and it follows
that $\tilde M^K$ satisfies the conditions of being C-tight.
\end{proof}

\subsection{Convergence of $\tilde M^K$ and $Z^K$} \label{SS:LimitNUniq}

\begin{proposition} \label{P:MInfty}
The sequence $\tilde M^K$ convergences weakly to $\tilde M^\infty$ such that for any $f\in W^{4}$,
$\tilde M^{f,\infty}_t \equiv (f,\tilde M^\infty_t)$, $t\in\mathbb{T}$, is a continuous Gaussian martingale
with predictable quadratic variation
\begin{equation} \label{E:MinftyPQV}
\big<\tilde M^{f,\infty}\big>_t = \int_0^t \big( f^2(0) w^\infty_{\bar A_s} + h^\infty_{\bar A_s} f^2 - 2f(0) h^\infty_{\bar A_s}\widehat m^\infty_{\bar A_s} f, \bar A_s \big) ds.
\end{equation}
\end{proposition}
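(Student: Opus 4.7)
The plan is to leverage the C-tightness of $\tilde M^K$ established in Corollary \ref{C:MKCtight} by extracting an arbitrary subsequential weak limit, characterizing it as a continuous Gaussian martingale with the predictable quadratic variation stated in \eqref{E:MinftyPQV}, and then invoking uniqueness of such a process to upgrade subsequential convergence to convergence of the full sequence.

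First I would pass to a subsequence along which $\tilde M^K \Rightarrow \tilde M^\infty$ in $\mathbb{D}(\mathbb{T},W^{-4})$, with $\tilde M^\infty$ having paths in $\mathbb{C}(\mathbb{T},W^{-4})$. For each fixed $f\in W^4$, the scalar process $\tilde M^{f,K}:=(f,\tilde M^K)$ is a square-integrable c\`adl\`ag martingale whose predictable quadratic variation is the time-independent specialization of \eqref{QVMGZt}. Using (C0) together with \eqref{E:1AK}, the expectation $\E[(\tilde M^{f,K}_t)^2]=\E\big[\big<\tilde M^{f,K}\big>_t\big]$ is uniformly bounded in $K$ and $t\le T$, so the family $\{\tilde M^{f,K}_t:K\ge 1\}$ is $L^2$-bounded and hence uniformly integrable. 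Since the evaluation map $\omega\mapsto (f,\omega_\cdot)$ is continuous from $\mathbb{D}(\mathbb{T},W^{-4})$ into $\mathbb{D}(\mathbb{T},\R)$, the continuous mapping theorem gives $\tilde M^{f,K}\Rightarrow (f,\tilde M^\infty)$ along the subsequence, and uniform integrability lets me pass to the limit in the martingale identity $\E[(\tilde M^{f,K}_t-\tilde M^{f,K}_s)\Psi]=0$ for arbitrary bounded continuous $\mathcal{F}_s$-measurable $\Psi$, yielding that $(f,\tilde M^\infty)$ is a continuous martingale.

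Next I would identify the predictable quadratic variation of $(f,\tilde M^\infty)$. Using the LLN (yielding $\bar A^K\Rightarrow \bar A^\infty$, with the limit deterministic and continuous in $t$), the uniform boundedness of the rates (C0), the uniform-in-$x$ convergence of the parameters in (C2) and (A0), and the Lipschitz-in-$A$ bounds in (C1) and (A0), I would show that for each $s\in\mathbb{T}$ the integrand
\begin{equation*}
\big(f^2(0)w^K_{\bar A^K_s}+h^K_{\bar A^K_s}f^2-2f(0)h^K_{\bar A^K_s}\widehat m^K_{\bar A^K_s}f,\bar A^K_s\big)
\end{equation*}
converges in probability to the corresponding $\infty$-indexed expression evaluated at $\bar A^\infty_s$. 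Bounded convergence then gives convergence in probability of $\big<\tilde M^{f,K}\big>_t$ to the deterministic integral in \eqref{E:MinftyPQV}. Since $(\tilde M^{f,K}_t)^2-\big<\tilde M^{f,K}\big>_t$ is a uniformly integrable martingale for each $K$, passing to the limit identifies $\big<(f,\tilde M^\infty)\big>_t$ with the RHS of \eqref{E:MinftyPQV}. A continuous martingale with deterministic quadratic variation is Gaussian; applying the argument to arbitrary linear combinations $\sum_j\alpha_j f_j$ with $f_j\in W^4$ shows that all finite-dimensional distributions of $\tilde M^\infty$ are Gaussian, and polarization recovers the full covariance structure.

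Finally, the law of a centered continuous Gaussian $W^{-4}$-valued martingale is determined by its covariance, so every subsequential limit has the same distribution and the full sequence converges. The main technical obstacle will be the convergence of the predictable quadratic variations: one must control the nonlinear dependence of $w^K_{\bar A^K_s},h^K_{\bar A^K_s},\widehat m^K_{\bar A^K_s}$ on the random measure $\bar A^K_s$ jointly with $\bar A^K\Rightarrow \bar A^\infty$, which is precisely what (A0)/(C1)--(C2) are designed to supply; the rest of the argument is a standard martingale-convergence/characterization scheme.
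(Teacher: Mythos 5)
Your overall scheme --- C-tightness, identification of every subsequential limit as a continuous Gaussian martingale with the stated predictable quadratic variation, then uniqueness of that law to upgrade to convergence of the full sequence --- is the same strategy as the paper's. The difference is in how the subsequential limit is characterized: the paper verifies the hypotheses of the martingale convergence theorem \cite[Theorem VIII 3.12(iv)]{JacShi03} (uniform integrability and vanishing of $\sup_{s\le t}|\Delta\tilde M^{f,K}_s|\le K^{-1/2}\|f\|_{W^4}(1+\Xi)$, together with convergence of $\big<\tilde M^{f,K}\big>_t$ to a deterministic continuous limit), whereas you carry out the martingale characterization by hand via limits of martingale identities.

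The by-hand version has a genuine gap at the quadratic-variation step. To pass to the limit in $\E\big[\big((\tilde M^{f,K}_t)^2-(\tilde M^{f,K}_s)^2-\big<\tilde M^{f,K}\big>_t+\big<\tilde M^{f,K}\big>_s\big)\Psi\big]=0$ you need the family $\{(\tilde M^{f,K}_t)^2:K\ge1\}$ to be uniformly integrable \emph{in $K$}; the $L^2$-boundedness you establish gives uniform integrability of $\tilde M^{f,K}_t$ itself (enough for the martingale property of the limit) but not of its square. Upgrading would require moments of order $2+\epsilon$ of the martingale, hence, via Burkholder--Davis--Gundy, moments of the offspring numbers beyond the second, which (A1) does not supply. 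This is precisely the difficulty that the truncation machinery inside the Jacod--Shiryaev theorem is designed to absorb, so you should either cite that result (as the paper does) or insert an explicit stopping/truncation argument at this step. The remaining ingredients of your argument --- convergence in probability of $\big<\tilde M^{f,K}\big>_t$ via the LLN, (C0)--(C2) and (A0), with domination by $\sup_{t\le T}(1,\bar A^K_t)$; Gaussianity from the deterministic quadratic variation; uniqueness of the Gaussian law forcing convergence of the whole sequence --- are sound and agree with the paper's reasoning.
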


\begin{proof}
Let $f\in W^{4}$.
Recall from the proof of Proposition \ref{P:ZKCtight} that
$$\sup_{s\le t} |\Delta \tilde M^{f,K}_s|^2 = \sup_{s\le t} |\Delta (f,Z^K_s)|^2
\le \frac{1}{K} ||f||^2_{W^{4}} (1+\Xi)^2.$$
Thus,
$$\sup_{K\ge1} \E\Big[ \sup_{s\le t} |\Delta \tilde M^{f,K}_s|^2 \Big]
\le \sup_{K\ge1} \E\Big[ \frac{1}{K} ||f||^2_{W^{4}} (1+\Xi)^2 \Big],$$
which is finite by (A1).
Therefore, $\sup_{s\le t} |\Delta \tilde M^{f,K}_s|$ is uniformly integrable and
converges to zero in probability for all $t\in\mathbb{T}$.
All limit points of $\tilde M^{f,K}$ are continuous (from Corollary \ref{C:MKCtight})
and $\big<\tilde M^{f,K}\big>_t$ converges to \eqref{E:MinftyPQV}.
By \cite[Theorem VIII 3.12(iv)]{JacShi03}
  $\tilde M^{f,K}$ converges to a continuous martingale $\tilde M^{f,\infty}$
with predictable quadratic variation  in \eqref{E:MinftyPQV}.
The limiting process is Gaussian as the predictable quadratic variation is deterministic.

Tightness of $\tilde M^K$ implies that there exists a subsequence that converges.
Suppose $M$ and $N$ both are accumulation points of $\tilde M^K$.
Then, we have $(f,M) = \tilde M^{f,\infty} = (f,N)$ for every $f\in W^{4}$,
and thus, we must have $M=N$ in $W^{-4}$.
Therefore, we can conclude that $\tilde M^K$ converges to $\tilde M$,
where $\tilde M^\infty$ is defined such that $(f, \tilde M^\infty) = \tilde M^{f,\infty}$
for every $f\in W^{4}$.
\end{proof}

\begin{proposition} \label{P:phiZInfty}
Every limit point $\mathscr{Z}$ of the sequence $Z^K$ satisfies,
for $\phi \in W^{4}$ and $t\in\mathbb{T}$,
\begin{multline} \label{E:phiZInfty}
(\phi,\mathscr{Z}_t) = (\varTheta_t\phi,\mathscr{Z}_0)
+ \int_0^t \big(-\partial_A h^\infty_{\bar{A}_s}(\mathscr{Z}_s) \varTheta_{t-s}\phi + \varTheta_{t-s}\phi(0)\partial_A n^\infty_{\bar{A}_s}(\mathscr{Z}_s), \bar{A}_s\big) ds \\
+ \int_0^t \big(- h^\infty_{\bar{A}_s}\varTheta_{t-s}\phi + \varTheta_{t-s}\phi(0)n^\infty_{\bar{A}_s}, \mathscr{Z}_s\big) ds
+ \int_0^t (\varTheta_{t-s}\phi, d\tilde{M}^{\infty}_s).
\end{multline}
\end{proposition}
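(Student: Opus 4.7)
The plan is to pass to the limit term-by-term in representation \eqref{E:phiZK} along a convergent subsequence. By the joint tightness of $(Z^K,\tilde M^K)$ in $\D(\mathbb{T},W^{-4})^2$ and their C-tightness (Propositions \ref{P:ZKCtight}, \ref{C:MKCtight}), together with the identification of the martingale limit in Proposition \ref{P:MInfty}, we extract a subsequence along which $(Z^{K_n},\tilde M^{K_n})\Rightarrow(Z,\tilde M^\infty)$ with continuous limits. By Skorokhod representation we may assume the convergence is almost sure, and continuity of the limits upgrades it to uniform convergence on $\mathbb{T}$. The LLN convergence $\bar A^K\to\bar A^\infty$ is deterministic and also uniform on $\mathbb{T}$.

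The initial-condition term converges via (A4) and Lemma \ref{L:ShiftExt}. For the non-Fr\'echet drift $\int_0^t(-h^K_{\bar A^K_s}\varTheta_{t-s}\phi+\varTheta_{t-s}\phi(0)n^K_{\bar A^K_s},Z^K_s)\,ds$, assumption (A2) together with continuity of the limiting parameters in $A$ from (A3) ensures that the test function in $W^4$ converges uniformly in $s$ to $-h^\infty_{\bar A^\infty_s}\varTheta_{t-s}\phi+\varTheta_{t-s}\phi(0)n^\infty_{\bar A^\infty_s}$; this, combined with a.s.\ uniform convergence $Z^{K_n}_\cdot\to Z_\cdot$ in $W^{-4}$ and the $W^{-3}$-bound of Proposition \ref{P:ZW-3Bound}, yields the corresponding limit by dominated convergence.

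The Fr\'echet term is the crucial step. Decompose
\[
\sqrt K\bigl(h^K_{\bar A^K_s}-h^\infty_{\bar A^\infty_s}\bigr)=\sqrt K\bigl(h^K_{\bar A^K_s}-h^\infty_{\bar A^K_s}\bigr)+\sqrt K\bigl(h^\infty_{\bar A^K_s}-h^\infty_{\bar A^\infty_s}\bigr).
\]
The first piece vanishes in $\|\cdot\|_\infty$ by the rate assumption $\sqrt K\sup_A\|h^K_A-h^\infty_A\|_\infty\to0$ in (A2). For the second, (A3) applied at $A_0=\bar A^\infty_s$ with increment $\bar A^K_s-\bar A^\infty_s$ (whose $W^{-4}$-norm is $K^{-1/2}\|Z^K_s\|_{W^{-4}}\to 0$ a.s.) gives
\[
\sqrt K\bigl(h^\infty_{\bar A^K_s}-h^\infty_{\bar A^\infty_s}\bigr)=\partial_A h^\infty_{\bar A^\infty_s}(Z^K_s)+R^K_s,\qquad \|R^K_s\|_\infty=\|Z^K_s\|_{W^{-4}}\,\varepsilon_K(s),
\]
with $\varepsilon_K(s)\to 0$ a.s. Continuity of the operator $\partial_A h^\infty_{\bar A^\infty_s}\colon W^{-4}\to L_\infty$ and the convergence $Z^{K_n}_s\to Z_s$ in $W^{-4}$ identify the main-term limit as $\partial_A h^\infty_{\bar A^\infty_s}(Z_s)$; an identical decomposition applies to $n$. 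Pairing with $\bar A^\infty_s$, integrating in $s$, and invoking dominated convergence using Proposition \ref{P:ZW-3Bound}, inequality \eqref{E:1AInfty}, and the uniform operator bound in (A3), produces the Fr\'echet term of \eqref{E:phiZInfty}.

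For the stochastic integral, fix $t$ and approximate $\int_0^t(\varTheta_{t-s}\phi,d\tilde M^K_s)$ by Riemann sums $\sum_i(\varTheta_{t-s_i}\phi,\tilde M^K_{s_{i+1}}-\tilde M^K_{s_i})$. Using the quadratic-variation formula \eqref{QVMGZt} together with the uniform rate bounds (C0), the $L^2$-error between sum and integral is controlled by the $W^4$-modulus of continuity of $s\mapsto\varTheta_{t-s}\phi$, uniformly in $K$; for a fixed partition the Riemann sum is a continuous functional of $\tilde M^K\in\D(\mathbb{T},W^{-4})$ and converges a.s.\ to its analogue for $\tilde M^\infty$. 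Letting the mesh shrink identifies the limit as $\int_0^t(\varTheta_{t-s}\phi,d\tilde M^\infty_s)$, completing \eqref{E:phiZInfty}. The principal obstacle is controlling the Fr\'echet remainder $\sqrt K R^K_s$, which couples a potentially large $\|Z^K_s\|_{W^{-4}}$ to a vanishing factor $\varepsilon_K(s)$ and requires the uniform integrability supplied by Proposition \ref{P:ZW-3Bound}, and justifying the stochastic-integral approximation with a time-dependent integrand.
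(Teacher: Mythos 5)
Your proof is correct and follows essentially the same route as the paper's: the same decomposition of $\sqrt K(h^K_{\bar A^K_s}-h^\infty_{\bar A^\infty_s})$ into a rate term controlled by (A2), a Fr\'echet remainder controlled by (A3), and an operator-continuity term $\partial_A h^\infty_{\bar A^\infty_s}(Z^K_s-Z_s)$, followed by the same splitting of the non-Fr\'echet drift and dominated convergence throughout. The one place you go beyond the paper is the explicit Riemann-sum justification that $\int_0^t(\varTheta_{t-s}\phi,d\tilde M^K_s)\to\int_0^t(\varTheta_{t-s}\phi,d\tilde M^{\infty}_s)$, a step the paper leaves implicit by appealing only to Proposition \ref{P:MInfty}; that extra care is warranted and your argument for it is sound.
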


\begin{proof}
First, we show that $\sqrt{K}(h^K_{\bar A^K_s}-h^\infty_{\bar A_s})$
converges to $\partial_A h^\infty_{\bar{A}_s}(\mathscr{Z}_s)$:
\begin{align*}
&\big| \sqrt{K}(h^K_{\bar A^K_s}-h^\infty_{\bar A_s}) - \partial_A h^\infty_{\bar{A}_s}(\mathscr{Z}_s) \big| \\
&\q\q\le \sqrt{K} \big|h^K_{\bar A^K_s} - h^\infty_{\bar A^K_s}\big|
+ \sqrt{K} \big|h^\infty_{\bar A^K_s} - h^\infty_{\bar A_s} - \partial_A h^\infty_{\bar{A}_s}(\bar A^K_s-\bar A_s)\big| \\
&\q\q\q\q\q\q+ \big|\partial_A h^\infty_{\bar{A}_s}(Z^K_s) - \partial_A h^\infty_{\bar{A}_s}(\mathscr{Z}_s)\big| \\
&\q\q\le \sqrt{K} \sup_A ||h^K_A - h^\infty_{A}||_\infty
+ \frac{||Z^K_s||_{W^{-4}}}{||\bar A^K_s-\bar A_s||_{W^{-4}}} ||h^\infty_{\bar A^K_s} - h^\infty_{\bar A_s} - \partial_A h^\infty_{\bar{A}_s}(\bar A^K_s-\bar A_s)||_\infty \\
&\q\q\q\q\q\q+ ||\partial_A h^\infty_{\bar{A}_s}||_{\mathbb{L}^{-4}} ||Z^K_s-\mathscr{Z}_s||_{W^{-4}},
\end{align*}
which converges to zero as $K$ tends to infinity;
the first term by (A2), the second by the definition of Fr\'echet derivative (A3), and the last term due to $\mathscr{Z}$ being a limit.
Similarly, $\sqrt{K}(n^K_{\bar A^K_s}-n^\infty_{\bar A_s})$
converges to $\partial_A n^\infty_{\bar{A}_s}(\mathscr{Z}_s)$.
Thus,
\begin{multline*}
\sqrt{K} \int_0^t \big( -(h^K_{\bar A^K_s}-h^\infty_{\bar A_s})\varTheta_{t-s}\phi + \varTheta_{t-s}\phi(0)(n^K_{\bar A^K_s}-n^\infty_{\bar A_s}), \bar A_s \big) ds \\
\underset{K\to\infty}{\to} \int_0^t \big(-\partial_A h^\infty_{\bar{A}_s}(\mathscr{Z}_s) \varTheta_{t-s}\phi + \varTheta_{t-s}\phi(0)\partial_A n^\infty_{\bar{A}_s}(\mathscr{Z}_s), \bar{A}_s\big) ds
\end{multline*}
by dominated convergence theorem.

Next, we show that
$\int_0^t ( - h^K_{\bar A^K_s}\varTheta_{t-s}\phi + \varTheta_{t-s}\phi(0)n^K_{\bar A^K_s}, Z^K_s ) ds$
converges to
$\int_0^t (- h^\infty_{\bar{A}_s}\varTheta_{t-s}\phi + \varTheta_{t-s}\phi(0)n^\infty_{\bar{A}_s}, \mathscr{Z}_s) ds$.
Using a similar argument as for Proposition \ref{P:LKNormBnd}\eqref{E:LhatKBnd},
with (A2),
\begin{align*}
&\Big| \int_0^t \big( - h^K_{\bar A^K_s}\varTheta_{t-s}\phi + \varTheta_{t-s}\phi(0)n^K_{\bar A^K_s}, Z^K_s \big) ds
- \int_0^t \big(- h^\infty_{\bar{A}_s}\varTheta_{t-s}\phi + \varTheta_{t-s}\phi(0)n^\infty_{\bar{A}_s}, \mathscr{Z}_s\big) ds \Big| \\
&\q\q\le \int_0^t || -(h^K_{\bar A^K_s}-h^\infty_{\bar A_s}) \varTheta_{t-s}\phi + \varTheta_{t-s}\phi(0) (n^K_{\bar A^K_s}-n^\infty_{\bar A_s}) ||_{W^{4}} ||Z^K_s||_{W^{-4}} ds \\
&\q\q\q\q\q\q+ \int_0^t ||- h^\infty_{\bar{A}_s}\varTheta_{t-s}\phi + \varTheta_{t-s}\phi(0)n^\infty_{\bar{A}_s} ||_{W^{4}} ||Z^K_s-\mathscr{Z}_s||_{W^{-4}} ds \\
&\q\q\le \int_0^t c_1 ||\phi||_{W^{4}} \big( ||h^K_{\bar A^K_s}-h^\infty_{\bar A_s}||_{C^{4}} + ||n^K_{\bar A^K_s}-n^\infty_{\bar A_s}||_{W^{4}} \big) ||Z^K_s||_{W^{-4}} ds \\
&\q\q\q\q\q\q+ \int_0^t c_2 ||\phi||_{W^{4}} \big( ||h^\infty_{\bar{A}_s}||_{C^4} + ||n^\infty_{\bar{A}_s}||_{W^4} \big) ||Z^K_s-\mathscr{Z}_s||_{W^{-4}} ds,
\end{align*}
which converges to 0 as $K\to\infty$.

Together with the convergence of $Z^K_0$ in (A4)
and the convergence of $\tilde M^K$ established in Proposition \ref{P:MInfty},
the proof is complete.
\end{proof}

It remains to show the uniqueness of the solution to Equation \eqref{E:phiZInfty}.

\begin{proposition} \label{P:ZInftyUniq}
Suppose that $\mathscr{Z}$ and $\mathscr{Y}$ both are solutions to Equation \eqref{E:phiZInfty} in Proposition \ref{P:phiZInfty} with $\mathscr{Z}_0=\mathscr{Y}_0$, then $\mathscr{Z}=\mathscr{Y}$.
\end{proposition}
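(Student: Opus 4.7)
The plan is to form the difference $X_t := Z_t - Y_t$, use linearity of equation \eqref{E:phiZInfty} to obtain a closed integral equation for $X_t$, bound $||X_t||_{W^{-4}}$ by an integral of itself, and conclude via Gronwall's inequality that $X \equiv 0$.

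More precisely, since $Z_0 = Y_0$, the term $(\varTheta_t\phi, Z_0)$ and the martingale integral $\int_0^t (\varTheta_{t-s}\phi, d\tilde M^\infty_s)$ are common to both equations and cancel upon subtraction. Using linearity of the Fréchet derivatives $\partial_A h^\infty_{\bar A^\infty_s}$ and $\partial_A n^\infty_{\bar A^\infty_s}$ in their argument, I obtain, for every $\phi\in W^4$ and $t\in\mathbb{T}$,
\begin{align*}
(\phi, X_t) &= \int_0^t \bigl(-\partial_A h^\infty_{\bar A^\infty_s}(X_s)\,\varTheta_{t-s}\phi + \varTheta_{t-s}\phi(0)\,\partial_A n^\infty_{\bar A^\infty_s}(X_s),\, \bar A^\infty_s\bigr)\, ds \\
&\qquad + \int_0^t \bigl(-h^\infty_{\bar A^\infty_s}\varTheta_{t-s}\phi + \varTheta_{t-s}\phi(0) n^\infty_{\bar A^\infty_s},\, X_s\bigr)\, ds.
\end{align*}

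Next I estimate the two integrals separately in terms of $||X_s||_{W^{-4}}$ and $||\phi||_{W^4}$. For the first, assumption (A3) gives $||\partial_A h^\infty_{\bar A^\infty_s}(X_s)||_\infty \le c\,||X_s||_{W^{-4}}$, and by Lemma \ref{L:ShiftExt} together with the Sobolev embedding $W^4 \hookrightarrow C^0$ one has $||\varTheta_{t-s}\phi||_\infty \le c\,||\phi||_{W^4}$; combined with \eqref{E:1AInfty} this bounds the integrand by $c\,||X_s||_{W^{-4}}\,||\phi||_{W^4}$. An identical argument handles the $\partial_A n^\infty$ term. For the second integral, assumption (A2) yields $||h^\infty_{\bar A^\infty_s}\varTheta_{t-s}\phi - \varTheta_{t-s}\phi(0)n^\infty_{\bar A^\infty_s}||_{W^4} \le c\,||\varTheta_{t-s}\phi||_{W^4} \le c'\,||\phi||_{W^4}$, again using Lemma \ref{L:ShiftExt}, so this integrand is bounded by $c\,||\phi||_{W^4}\,||X_s||_{W^{-4}}$.

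Combining these bounds and taking the supremum over $\phi\in W^4$ with $||\phi||_{W^4}\le 1$ gives
\[
||X_t||_{W^{-4}} \le c_T \int_0^t ||X_s||_{W^{-4}}\, ds,\qquad t\in\mathbb{T},
\]
with a constant $c_T$ independent of $t$. Gronwall's inequality then forces $||X_t||_{W^{-4}} = 0$ for every $t\in\mathbb{T}$, i.e.\ $Z_t = Y_t$ in $W^{-4}$ for all $t$, proving uniqueness. The main conceptual point — and what makes the argument short — is that the shift-operator representation in \eqref{E:phiZInfty} has already absorbed the unbounded derivative $f\mapsto f'$ into $\varTheta_{t-s}$, so only bounded operators remain and the estimate closes at the level of $W^{-4}$ without any loss of regularity; the chief technical ingredient is Lemma \ref{L:ShiftExt} guaranteeing that $||\varTheta_{t-s}\phi||_{W^4} \le c\,||\phi||_{W^4}$.
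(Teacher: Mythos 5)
Your proposal is correct and follows essentially the same route as the paper: subtract the two copies of \eqref{E:phiZInfty} (the initial term and the martingale cancel, and linearity of the Fr\'echet derivatives closes the equation in $X_t=Z_t-Y_t$), bound both integrands by $c\,||\phi||_{W^4}\,||X_s||_{W^{-4}}$ using (A2), (A3), the shift extension of Lemma \ref{L:ShiftExt} and the mass bound \eqref{E:1AInfty}, and conclude by Gronwall. The only cosmetic difference is that the paper packages the second estimate through the operator norm $||\widehat L^\infty_{\bar A^\infty_s}||_{\mathcal{L}^{4,4}}$ (the $L^\infty$-analogue of Proposition \ref{P:LKNormBnd}), which is exactly the bound you invoke directly from (A2).
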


\begin{proof}
First, note that Proposition \ref{P:LKNormBnd}\eqref{E:LhatKBnd} remains true if $\widehat L^K_A$ is replaced with
$\widehat L^\infty_A: f \mapsto -h^\infty_Af + f(0) n^\infty_A$, for $j\le 4$, due to (A2).
Now, let $\phi\in W^{4}$ and $t\in\mathbb{T}$, by triangle inequality, we have
\begin{align*}
|(\phi,\mathscr{Z}_t-\mathscr{Y}_t)|
&\le \int_0^t \big( \big|\partial_A h^\infty_{\bar{A}_s}(\mathscr{Z}_s-\mathscr{Y}_s)\big| |\varTheta_{t-s}\phi| + |\varTheta_{t-s}\phi(0)| \big|\partial_A n^\infty_{\bar{A}_s}(\mathscr{Z}_s-\mathscr{Y}_s)\big|, \bar{A}_s \big) ds \\
&\q\q\q+ \int_0^t ||- h^\infty_{\bar{A}_s}\varTheta_{t-s}\phi + \varTheta_{t-s}\phi(0)n^\infty_{\bar{A}_s}||_{W^{4}} ||\mathscr{Z}_s-\mathscr{Y}_s||_{W^{-4}} ds \\
&\le \int_0^t c_1 ||\phi||_{W^{4}} \big( ||\partial_A h^\infty_{\bar{A}_s}||_{\mathbb{L}^{-4}} + ||\partial_A n^\infty_{\bar{A}_s}||_{\mathbb{L}^{-4}} \big) ||\mathscr{Z}_s-\mathscr{Y}_s||_{W^{-4}} (1,\bar{A}_s) ds \\
&\q\q\q+ \int_0^t ||\widehat L^\infty_{\bar{A}_s}||_{\mathcal{L}^{4,4}} ||\varTheta_{t-s}\phi||_{W^{4}} ||\mathscr{Z}_s-\mathscr{Y}_s||_{W^{-4}} ds \\
&\le c_2 ||\phi||_{W^{4}} \big(1+ (1,\bar{A}_0) e^{c_3T}\big) \int_0^t ||\mathscr{Z}_s-\mathscr{Y}_s||_{W^{-4}} ds.
\end{align*}
Thus, 
$$||\mathscr{Z}_t-\mathscr{Y}_t||_{W^{-4}} \le c_2 \big(1+ (1,\bar{A}_0) e^{c_3T}\big) \int_0^t ||\mathscr{Z}_s-\mathscr{Y}_s||_{W^{-4}} ds.$$
It then follows by Gronwall's inequality that $||\mathscr{Z}_t-\mathscr{Y}_t||_{W^{-4}} =0$. Therefore, $\mathscr{Z}=\mathscr{Y}$.
\end{proof}

Lastly, we note that Equation \eqref{E:phiZInfty} is the same as Equation \eqref{E:fZInfty}.
This is straightforward and the proof is omitted.

\begin{proposition}\label{P:same}
The limiting process $Z$ satisfies Equation \eqref{E:fZInfty}, for any $f\in W^{4}$ and $t\in\mathbb{T}$.
\end{proposition}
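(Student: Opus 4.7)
The plan is to derive \eqref{E:fZInfty} from \eqref{E:phiZInfty} via a variation-of-constants (Duhamel) argument. Indeed, \eqref{E:phiZInfty} is the mild form of the SPDE whose weak form is \eqref{E:fZInfty}: the translation semigroup $\varTheta_t \phi(x) = \phi(x+t)$ is generated by the differentiation operator $\phi \mapsto \phi'$ and satisfies
\begin{equation*}
\partial_t [\varTheta_{t-s}\phi(x)] = (\varTheta_{t-s}\phi')(x) \qquad \text{for every } x \in \mathbb{T}^*,
\end{equation*}
in particular at $x = 0$, since $\varTheta_{t-s}\phi(0) = \phi(t-s)$.

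First I would take $\phi \in W^5$ (so that $\phi' \in W^4$), apply \eqref{E:phiZInfty} to both $\phi$ and $\phi'$, set $F_t := (\phi, Z_t)$ as given by \eqref{E:phiZInfty}, and differentiate the right-hand side in $t$ using the classical and stochastic Leibniz rules. The boundary contribution at the upper limit $s = t$ (where $\varTheta_0 = \mathrm{Id}$) yields exactly the non-$f'$ part of the integrand of \eqref{E:fZInfty} evaluated at time $t$, together with the martingale differential $d\tilde M^{\phi,\infty}_t$. The ``internal'' derivatives use the identity $\partial_t[\varTheta_{t-s}\phi] = \varTheta_{t-s}\phi'$, and, combined with the derivative $(\varTheta_t\phi', Z_0)$ of the initial term and the internal stochastic derivative $\int_0^t (\varTheta_{t-s}\phi', d\tilde M^\infty_s)$, reassemble into the right-hand side of \eqref{E:phiZInfty} applied to $\phi'$ in place of $\phi$, which by that same equation equals $(\phi', Z_t)$. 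Integrating the resulting differential identity from $0$ to $t$ produces \eqref{E:fZInfty} for $f = \phi \in W^5$.

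Next I would extend from $W^5$ to arbitrary $f \in W^4$ by density: $W^5$ is dense in $W^4$, and each term of \eqref{E:fZInfty} depends continuously on $f$ in the $W^4$-norm, using Propositions \ref{P:LKNormBnd} and \ref{P:ZW-3Bound}, the growth bound \eqref{E:1AInfty}, and the isometry of the Gaussian stochastic integral against its predictable quadratic variation (Theorem \ref{T:CLT}).

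The hard part will be the stochastic Leibniz step, namely justifying
\begin{equation*}
d\Bigl[\int_0^t (\varTheta_{t-s}\phi, d\tilde M^\infty_s)\Bigr] = (\phi, d\tilde M^\infty_t) + \Bigl(\int_0^t (\varTheta_{t-s}\phi', d\tilde M^\infty_s)\Bigr) dt;
\end{equation*}
this is a stochastic Fubini, valid because $\tilde M^\infty$ is a continuous Gaussian martingale and $s \mapsto \varTheta_{t-s}\phi'$ is deterministic with $W^4$-norm uniformly bounded in $s \in [0,t]$ by Lemma \ref{L:ShiftExt}, which together with the explicit form of $\langle \tilde M^{f,\infty}\rangle$ supplies the integrability needed to interchange $\partial_t$ with the stochastic integral.
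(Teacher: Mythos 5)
The paper offers no proof of this proposition --- it declares the passage from \eqref{E:phiZInfty} to \eqref{E:fZInfty} ``straightforward'' and omits it --- so there is no argument of the authors' to compare against; your job is to supply one, and your argument is correct. The mild-to-weak conversion by differentiating \eqref{E:phiZInfty} in $t$ --- the boundary contribution at $s=t$ producing the non-derivative part of the generator, the interior derivatives reassembling via $\partial_t\varTheta_{t-s}\phi=\varTheta_{t-s}\phi'$ into the right-hand side of \eqref{E:phiZInfty} applied to $\phi'$ (whence $(\phi',Z_t)$), and a stochastic Fubini handling the martingale term --- is the standard route, and working first with $\phi\in W^5$ and then passing to $W^4$ by density is the right way to absorb the loss of one derivative.

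Two small points you should make explicit. First, the extension $\varTheta_u\phi$ constructed in Lemma \ref{L:ShiftExt} by reflecting the top derivative need not satisfy $\partial_u\varTheta_u\phi=\varTheta_u\phi'$ on the reflected piece $(T^*-u,T^*]$; this is harmless only because $\bar A^\infty_s$, $Z_s$ and $\tilde M^\infty_s$ are all supported in $[0,s+a^*]\subseteq[0,T^*-(t-s)]$ for $s\le t\le T$, so the values of the test functions on the extension region never enter the pairings --- say so. Second, in the density step the term $\int_0^t(f',Z_s)\,ds$ only makes sense if $Z_s$ acts on $W^3$; membership of the limit point in $W^{-3}$ does not follow from $Z\in\mathbb{D}(\mathbb{T},W^{-4})$ alone but must be extracted from Proposition \ref{P:ZW-3Bound} by lower semicontinuity of the $W^{-3}$-norm along the weakly convergent subsequence. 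With these two remarks added, the proof is complete.
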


\section{Proofs of Corollary \ref{C:SPDE} and Proposition \ref{P:EZtMeasure}} \label{S:ZInftyProofs}

\begin{proof} [Proof of Corollary \ref{C:SPDE}]
The SPDE representation follows by direct calculation.
To establish that $(\tilde{M}^\infty)_{t,f}$ is Gaussian, we use the Cram\'er-Wold device,
by showing that for all $f_1, \ldots, f_n$ in $W^4$,
$((\tilde M_t^{f_1,\infty}, \ldots, \tilde M_t^{f_n,\infty}))_{t\ge0}$ is Gaussian.
This is equivalent to showing that for all $\alpha_1,\ldots, \alpha_n \ge0$,
$(\alpha_1\tilde M_t^{f_1,\infty} + \cdots + \alpha_n\tilde M_t^{f_n,\infty})_{t\in\mathbb{T}}$ is Gaussian,
which is true observing that
$\alpha_1\tilde M_t^{f_1,\infty} +\cdots+ \alpha_n\tilde M_t^{f_n,\infty} = \tilde M_t^{(\alpha_1f_1+\cdots+\alpha_nf_n),\infty}$.
\end{proof}

\begin{proof} [Proof of Proposition \ref{P:EZtMeasure}]
From representation \eqref{E:phiZInfty}, we obtain, for $\phi\in W^4$,
\begin{multline*}
\E[(\phi,Z_t)] = (\varTheta_t\phi,Z_0) \\
+ \int_0^t \int \bigg(-\E\bigg[ \int g^h(\bar{A}_s,x,y) Z_s(dy) \bigg] \varTheta_{t-s}\phi(x) + \phi(t-s)\E\bigg[ \int g^n(\bar{A}_s,x,y) Z_s(dy) \bigg] \bigg) \bar{A}_s(dx) ds \\
+ \int_0^t \E\Big[\big(- h^\infty_{\bar{A}_s}\varTheta_{t-s}\phi + \phi(t-s)n^\infty_{\bar{A}_s}, Z_s\big)\Big] ds
\end{multline*}
as $\E\big[\int_0^t (\varTheta_{t-s}\phi, d\tilde{M}^{\infty}_s) \big] =0$.
Defining $\nu_t : f \mapsto \E[(f,Z_t)]$, the above becomes
\begin{multline} \label{E:PhiEZt}
(\phi,\nu_t) = (\varTheta_t\phi,\nu_0) \\
+ \int_0^t \int \left(- \int g^h(\bar{A}_s,x,y) \nu_s(dy) \varTheta_{t-s}\phi(x) + \phi(t-s) \int g^n(\bar{A}_s,x,y) \nu_s(dy) \right) \bar{A}_s(dx) ds \\
+ \int_0^t \Big(- h^\infty_{\bar{A}_s}\varTheta_{t-s}\phi + \phi(t-s)n^\infty_{\bar{A}_s}, \nu_s\Big) ds.
\end{multline}
Using \eqref{extend}, \eqref{E:1AInfty} and (A2), we have
\begin{multline*}
|(\phi,\nu_t)| \le ||\phi||_{W^4} ||\nu_0||_{W^{-4}} \\
+ c_1 ||\phi||_{W^4} \Big(\sup_{A,x} ||g^h(A,x,\cdot)||_{W^4} + \sup_{A,x} ||g^n(A,x,\cdot)||_{W^4}\Big) (1,\bar A_0)e^{c_2t} \int_0^t ||\nu_s||_{W^{-4}} ds \\
+ c_3 ||\phi||_{W^4} \int_0^t ||\nu_s||_{W^{-4}} ds,
\end{multline*}
which gives, by Gronwall's inequality,
$||\nu_t||_{W^{-4}} \le c_T ||\nu_0||_{W^{-4}}$.

Now, let $(\phi_k)_k$ be a sequence of functions in $C^\infty$ that converges to $\phi\in C^0$.
By dominated convergence theorem,
\eqref{E:PhiEZt} holds for $\phi\in C^0$.
Moreover, $\nu:C^0\to\R$ is a bounded linear operator.
Therefore, $\nu_t$ can be seen as an element in $C^{-0}$, that is, it is a signed measure.
\end{proof}


\section{Example: parameters that are essentially linear} \label{S:Example}

In this section, we give some examples of the reproduction parameters that satisfy the assumptions that we imposed for the LLN and CLT.
Suppose the reproduction parameters are of the form $q^K_{\bar A^K}(x) = q\left(x,(1,\bar A^K),\int g(x,y)\bar A^K(dy)\right)$, where $q$ could be any of $b,h,m,v$; and, 
$g:\T^*\times\T^* \to\R$ and $q:\T^*\times\R_+\times\R\to\R$.
We shall refer to the function $g$ as a demography kernel.
Suppose that:
\begin{enumerate}
\item The function $g$ is element of $C^{4,4}$.
\item The functions $b,h,m$ are elements of $C^{4,1,4}$; and for $q=b,h,m$,
\begin{enumerate}
\item $\begin{displaystyle} \sup_{x,y,z} |\partial_2 q(x,y,z)| <\infty\end{displaystyle}$;
\item $\begin{displaystyle} \sup_{x,y,z} (1+y)^k |\partial_3^k \partial_1^j q(x,y,z)| <\infty \textnormal{ for } j=0,1,\dots,4-k \textnormal{ and } k=0,1,2,3,4 \end{displaystyle}$,
\end{enumerate}
where $\partial_i^j$ denotes the $j$th order partial derivative with respect to the $i$th variable.
\item The function $v$ is bounded and Lipschitz in the second and the third variables, uniformly in the first variable, i.e.
$$\sup_x |v(x,y_1,z_1)-v(x,y_2,z_2)| \le c(|y_1-y_2|+|z_1-z_2|).$$
\end{enumerate}
Then, together with assumptions (C3), (A1) and (A4), the LLN and CLT hold with
$q^\infty_{\bar A}(x) = q\left(x,(1,\bar A),\int g(x,y) \bar A(dy)\right)$ and
\begin{multline*}
\partial_Aq^\infty_{A_0}(B)(x) 
= \partial_2q\left(x,(1,A_0),\int g(x,y) A_0(dy)\right)(1,B) \\
+ \partial_3q\left(x,(1,A_0),\int g(x,y),A_0(dy)\right) \int g(x,y) B(dy).
\end{multline*}
It also follows from Proposition \ref{P:EZtMeasure} that $\nu_t :f \mapsto \E[(f,Z_t)]$ is a measure and satisfies the following equation, with $\mathtt{x}_t:= (1,\bar A_t)$: 
\begin{align*}
&(f,\nu_t) = (f,\nu_0) \\
&+ \int_0^t \bigg\{ (1,\nu_s) \int \left(f(0)\partial_2n\Big(x,\mathtt{x}_s,\int g(x,y) \bar A_s(dy) \Big)
- \partial_2h\Big(x,\mathtt{x}_s, \int g(x,y) \bar A_s(dy)\Big)f(x) \right) \bar A_s(dx) \\
&\q+ \left(\left(f(0)\partial_3n\Big(x,\mathtt{x}_s, \int g(x,y) \bar A_s(dy)\Big)
- \partial_3h\Big(x,\mathtt{x}_s, \int g(x,y) \bar A_s(dy)\Big)f(x) \right) \int g(x,y) \nu_s(dy) \right) \bar A_s(dx) \bigg\} ds \\
&+ \int_0^t \left(f'(x)-h\Big(x,\mathtt{x}_s, \int g(x,y)\bar A_s(dy)\Big)f(x) + f(0)n\Big(x,\mathtt{x}_s, \int g(x,y)\bar A_s(dy)\Big) \right) \nu_s(dx) ds.
\end{align*}

In what follows, we consider a few special cases. We will also see that when $q_{\bar A^K}^K(x)$ is a function of $(1,\bar A^K)$ only, or is a constant, an explicit expression for the density of the measure $\E[Z_t]$ can be computed.


\subsection{Special case}

Suppose that the reproduction parameters are of the form $\bar q\left(x,\frac{\int g(x,y)\bar A^K(dy)}{1+(1,\bar A^K)}\right)$, where $ \bar q:\T^*\times\R\to\R$ and $g \in C^{4,4}$.
In other words, we take $q(x,y,z) = \bar q(x,\frac{z}{1+y})$.
Conditions (2) and (3) on $q$ above then reduce to
$\bar q \in C^{4,4}$ with
\begin{enumerate}
\item[(a)] $\begin{displaystyle}\sup_{x,u} |u \partial_2 \bar q(x,u)| < \infty\end{displaystyle}$, for $\bar q=b,h,m,v$
\item[(b)] $\begin{displaystyle}\sup_{x,u} |\partial_1^j\partial_2^k \bar q(x,u)| < \infty\end{displaystyle}$, for $j=0,1,\dots,4-k$, $k=0,1,2,3,4$ and $\bar q=b,h,m$.
\end{enumerate}
Note that (a) implies the Lipschitz condition.
Moreover,
$$\partial_A \bar q^\infty_{A_0}(B)(x) =
\partial_2 \bar q\left(x,\frac{\int g(x,y) A_0(dy)}{1+(1,A_0)}\right) \frac{(1+(1,A_0))\int g(x,y)B(dy) - (1,B)\int g(x,y) A_0(dy)}{(1+(1,A_0))^2},$$
and the measure $\nu_t : f \mapsto \E[(f,Z_t)]$ satisfies the following equation with $\mathtt{x}_t:= (1,\bar A_t)$:
\begin{align*}
&(f,\nu_t) = (f,\nu_0) + \int_0^t \bigg( \frac{(1+\mathtt{x}_s) \int g(x,y) \nu_s(dy) - (1,\nu_s) \int g(x,y) \bar A_s(dy)}{(1+\mathtt{x}_s)^2} \\
&\q\q\q\q\times \int \left(f(0) \partial_2n\Big(x,\frac{\int g(x,y) \bar A_s(dy)}{1+\mathtt{x}_s}\Big) - \partial_2h\Big(x,\frac{\int g(x,y) \bar A_s(dy)}{1+\mathtt{x}_s}\Big) f(x) \right) \bar A_s(dx) ds \\
&\q\q+ \int_0^t \int \left(f'(x) - h\Big(x,\frac{\int g(x,y) \bar A_s(dy)}{1+\mathtt{x}_s}\Big) f(x) + f(0)n\Big(x,\frac{\int g(x,y) \bar A_s(dy)}{1+\mathtt{x}_s}\Big) \right) \nu_s(dx) ds.
\end{align*}


\subsection{Age-and-density-dependent case}

Suppose that the parameters are of the form $\tilde q(x,(1,\bar A^K))$, $\tilde q:\T^*\times\R_+\to\R$,
that is, $q(x,y,z) = \tilde q(x,y)$.
Then, the conditions on $q$ reduce to
$\tilde q \in C^{4,1}$ with
\begin{enumerate}
\item[(a)] $\begin{displaystyle}\sup_{x,y} |\partial_2 \tilde q(x,y)| < \infty\end{displaystyle}$, for $\tilde q=b,h,m,v$,
\item[(b)] $\begin{displaystyle}\sup_{x,y} |\partial_1^k \tilde q(x,y)| < \infty\end{displaystyle}$, for $k=0,1,2,3,4$ and $\tilde q=b,h,m$,
\end{enumerate}
and we have
$\partial_A \tilde q^\infty_{A_0}(B)(x) = \partial_2 \tilde q(x,(1,A_0))(1,B)$.
With $\mathtt{x}_t:= (1,\bar A_t)$, the measure $\nu_t : f \mapsto \E[(f,Z_t)]$ satisfies
\begin{multline*}
(f,\nu_t) = (f,\nu_0) + \int_0^t (1,\nu_s) \int \Big(f(0)\partial_2n(x,\mathtt{x}_s) - \partial_2h(x,\mathtt{x}_s)f(x)\Big) \bar A_s(dx) ds \\
+ \int_0^t \int \Big(f'(x) - h(x,\mathtt{x}_s)f(x) + f(0)n(x,\mathtt{x}_s)\Big) \nu_s(dx) ds.
\end{multline*}


\subsection{Density-dependent case}

Suppose that the reproduction parameters are of the form $\hat q((1,\bar A^K))$, where $\hat q:\R_+\to\R$.
We remark that this case can be seen as that given by Ethier and Kurtz \cite{EthKur05}, Chapter 11, Theorem 2.1 and 2.3, with $\beta_l(x) = xb(x)\widecheck p_{x}(l) + xh(x)\widehat p_{x}(l)$, where $\widecheck p_{x}(l)$ and $\widehat p_{x}(l)$ denotes the probability mass functions of $\widecheck\xi_{x}$ and $\widehat\xi_{x}-1$.

Then, the conditions on $q$ further reduce to $\hat q\in C_b^1$
and $\partial_A \hat q^\infty_{A_0}(B)(x) = \hat q'((1,A_0))(1,B)$.
Moreover, the measure $\E[Z_t]$ has a density if $\E[Z_0]$ does.
Indeed, with $\mathtt{x}_t:= (1,\bar A_t)$,
\begin{multline*}
(f,Z_t) = (f,Z_0) +
\int_0^t (-h'(\mathtt{x}_s)(1,Z_s) f + f(0) n'(\mathtt{x}_s)(1,Z_s), \bar{A}_s) ds \\
+ \int_0^t (f' - h(\mathtt{x}_s)f + f(0)n(\mathtt{x}_s), Z_s) ds + \tilde{M}^{f,\infty}_t.
\end{multline*}
Taking $f_\lambda(x) = e^{\lambda x}$ and writing $\tilde M^\lambda_t$ for the martingale, we have
\begin{multline} \label{E:DensityDepCase}
(f_\lambda,Z_t) = (f_\lambda,Z_0) +
\int_0^t \big(n(\mathtt{x}_s)+n'(\mathtt{x}_s)\mathtt{x}_s -h'(\mathtt{x}_s)(f_\lambda,\bar{A}_s) \big) (1,Z_s) ds \\
+ \int_0^t \big(\lambda-h(\mathtt{x}_s)\big) (f_\lambda,Z_s) ds + \tilde M^\lambda_t.
\end{multline}
Taking expectation and letting $\phi(s,\lambda) = n(\mathtt{x}_s)+n'(\mathtt{x}_s)\mathtt{x}_s -h'(\mathtt{x}_s)(f_\lambda,\bar{A}_s)$
and $\psi(s,\lambda) = \lambda-h(\mathtt{x}_s)$,
\begin{equation*}
\E[(f_\lambda,Z_t)] = \E[(f_\lambda,Z_0)] +
\int_0^t \phi(s,\lambda) \E[(1,Z_s)] ds \\
+ \int_0^t \psi(s,\lambda) \E[(f_\lambda,Z_s)] ds.
\end{equation*}
Solving this gives
$$\E[(f_\lambda,Z_t)] = e^{\int_0^t \psi(s,\lambda) ds} \bigg\{ \E[(f_\lambda,Z_0)]
+ \E[(1,Z_0)] \int_0^t \phi(s,\lambda) e^{\int_0^s (\psi(r,0)-\psi(r,\lambda)+\phi(r,0)) dr} ds \bigg\},$$
which reduces to
\begin{multline*}
\E[(f_\lambda,Z_t)]
= e^{\lambda t - \int_0^t h(\mathtt{x}_s) ds} \bigg\{ \E[(f_\lambda,Z_0)]
+ \E[(1,Z_0)] \\
\times \int_0^t \big(n(\mathtt{x}_s)+n'(\mathtt{x}_s)\mathtt{x}_s -h'(\mathtt{x}_s)(f_\lambda,\bar{A}_s)\big)
e^{-\lambda s + \int_0^s (n(\mathtt{x}_r)+(n'(\mathtt{x}_r)-h'(\mathtt{x}_r))\mathtt{x}_r) dr} ds \bigg\}.
\end{multline*}
Inverting the transform, we obtain an expression for the density.
Suppose that $\E[Z_0]$ has density $\mathfrak{z}_0(x)$, then $\E[Z_t]$ has density $\mathfrak{z}_t(x)$ and
\begin{multline*}
\mathfrak{z}_t(x) = e^{- \int_0^t h(\mathtt{x}_s) ds} \bigg\{
\mathfrak{z}_0(x-t) \mathbf{1}_{x>t}
+ \E[(1,Z_0)] \bigg\{ \\
\big(n(\mathtt{x}_{t-x})+n'(\mathtt{x}_{t-x})\mathtt{x}_{t-x}\big) e^{ \int_0^{t-x} (n(\mathtt{x}_r)+(n'(\mathtt{x}_r)-h'(\mathtt{x}_r))\mathtt{x}_r) dr} \mathbf{1}_{x\le t} \\
- \int_{(t-x)\vee0}^t h'(\mathtt{x}_s) e^{\int_0^s (n(\mathtt{x}_r)+(n'(\mathtt{x}_r)-h'(\mathtt{x}_r))\mathtt{x}_r) dr} a(x-t+s,t) ds
\bigg\} \bigg\}
\end{multline*}
where $a(x,t)$ is the density of $\bar A_t$.

In fact, we can solve \eqref{E:DensityDepCase} and obtain
$$(f_\lambda,Z_t) = e^{\int_0^t (\lambda-h(\mathtt{x}_s)) ds} (f_\lambda,Z_0) + \int_0^t e^{\int_s^t (\lambda-h(\mathtt{x}_r)) dr} \Big( \phi(s,\lambda) (1,Z_s) ds + d\tilde M^\lambda_s \Big).$$
Note that
$$(1,Z_t) = e^{\int_0^t \varphi(s)ds} (1,Z_0) + e^{\int_0^t \varphi(r)dr} \int_0^t e^{-\int_0^s \varphi(r)dr} d\tilde M^0_s$$
with
$\varphi(s) = n(\mathtt{x}_s)-h(\mathtt{x}_s) + \big(n'(\mathtt{x}_s)-h'(\mathtt{x}_s)\big) \mathtt{x}_s$.
Thus,
\begin{align*}
&(f_\lambda,Z_t) = e^{\int_0^t (\lambda-h(\mathtt{x}_s)) ds} (f_\lambda,Z_0) \\
&\q + \int_0^t e^{\int_s^t (\lambda-h(\mathtt{x}_r)) dr} \bigg( \phi(s,\lambda) \Big( e^{\int_0^s \varphi(r)dr} (1,Z_0) + e^{\int_0^s \varphi(r)dr} \int_0^s e^{-\int_0^u \varphi(r)dr} d\tilde M^0_u \Big) ds + d\tilde M^\lambda_s \bigg)
\end{align*}
with
$$\big<\tilde{M}^0,\tilde{M}^{\lambda}\big>_t = \int_0^t \Big( w(\mathtt{x}_s)\mathtt{x}_s - h(\mathtt{x}_s)\widehat m(\mathtt{x}_s)\mathtt{x}_s + h(\mathtt{x}_s)\big(1-\widehat m(\mathtt{x}_s)\big)(f_\lambda, \bar A_s) \Big) ds.$$

We can also write the SPDE of $Z$:
$$dZ_t = \Big( \big( n(\mathtt{x}_t)+n'(\mathtt{x}_t)\mathtt{x}_t \big) (1,Z_t)\delta_0 -h(\mathtt{x}_t)Z_t -h'(\mathtt{x}_t)(1,Z_t)\bar A_t -(Z_t)' \Big)dt + d\tilde M^\infty_t$$
with
\begin{align*}
\big<\tilde{M}^{f,\infty}\big>_t 
&= \int_0^t (f^2(0)w(\mathtt{x}_t)+h(\mathtt{x}_t)f^2-2f(0)h(\mathtt{x}_t)\widehat m(\mathtt{x}_t)f, \bar A_s) ds \\
&= f^2(0) w(\mathtt{x}_t) \int_0^t \mathtt{x}_s ds + h(\mathtt{x}_t) \int_0^t (f^2,\bar A_s) ds
- 2f(0)h(\mathtt{x}_t)\widehat m(\mathtt{x}_t) \int_0^t (f,\bar A_s) ds,
\end{align*}
and $\mathtt{x}_t = \mathtt{x}_0 e^{\int_0^t(n(\mathtt{x}_s)-h(\mathtt{x}_s))ds}$.


\subsection{Classical case}

Assume constant parameters $b$, $h$, $m$ and $v$, then, for a test function $f$,
$$(f,Z_t) = (f,Z_0) + \int_0^t (f'-hf+f(0)n,Z_s) ds + \tilde M^{f,\infty}_t.$$
Taking $f_\lambda(x) = e^{\lambda x}$
and writing $\tilde M^\lambda_t$ for the martingale, we have
\begin{equation} \label{E:ClassicalCaseDE}
(f_\lambda,Z_t) = (f_\lambda,Z_0) + (\lambda-h) \int_0^t (f_\lambda,Z_s) ds + n\int_0^t (1,Z_s) ds + \tilde M^\lambda_t
\end{equation}
with
$\big<\tilde{M}^{\lambda}\big>_t = \int_0^t (w+hf_{2\lambda}-2h\widehat mf_\lambda, \bar A_s) ds$.
Taking expectation and solving it, we obtain
$$\E[(f_\lambda,Z_t)] = e^{-ht} \Big(e^{\lambda t}\E[(f_\lambda,Z_0)] + \frac{n}{n-\lambda}\big(e^{nt} - e^{\lambda t}\big)\E[(1,Z_0)]\Big).$$
Suppose that $\E[Z_0]$ has density $\mathfrak{z}_0(x)$, then $\E[Z_t]$ has density $\mathfrak{z}_t(x)$ and
$$\mathfrak{z}_t(x) = e^{-ht}\mathfrak{z}_0(x-t)\mathbf{1}_{x>t}+n\E[(1,Z_0)]e^{(n-h)t}e^{-nx}\mathbf{1}_{x\leq t}.$$

In fact, \eqref{E:ClassicalCaseDE} can also be solved to obtain
$$(f_\lambda,Z_t) = e^{(\lambda-h)t}(f_\lambda,Z_0) + \int_0^t e^{(\lambda-h)(t-s)} \Big( n(1,Z_s) ds + d\tilde M^\lambda_s \Big).$$
With
$(1,Z_t) = e^{(n-h)t} (1,Z_0) + \int_0^t e^{(n-h)(t-s)} d\tilde M^0_s$,
we can write
\begin{align*}
(f_\lambda,Z_t) &= e^{(\lambda-h)t}(f_\lambda,Z_0) + \int_0^t e^{(\lambda-h)(t-s)} \bigg( ne^{(n-h)s} \Big( (1,Z_0) + \int_0^s e^{-r(n-h)} d\tilde M^0_r\Big) ds + d\tilde M^\lambda_s \bigg) \\
&= e^{(\lambda-h)t}(f_\lambda,Z_0) + \frac{n}{n-\lambda} (1,Z_0) \big(e^{(n-h)t}-e^{(\lambda-h)t}\big) \\
&\q\q+ n e^{(\lambda-h)t} \int_0^t e^{(n-\lambda)s} \int_0^s e^{-(n-h)r} d\tilde M^0_r ds + \int_0^t e^{(\lambda-h)(t-s)} d\tilde M^\lambda_s,
\end{align*}
where
$$\big<\tilde{M}^0,\tilde{M}^{\lambda}\big>_t = \int_0^t \big(w-h\widehat m + h(1-\widehat m)f_\lambda, \bar A_s \big) ds.$$

The SPDE of $Z$ is
$$dZ_t(dx) = \big( n(1,Z_t)\delta_0(dx) -hZ_t(dx) -(Z_t)'(dx) \big)dt + d\tilde M^\infty_t(dx)$$
with
$$\big<\tilde{M}^{f,\infty}\big>_t
= f^2(0) \frac{w}{n-h} (1,\bar A_0) \big(e^{(n-h)t}-1\big)
+ h \int_0^t (f^2,\bar A_s) ds
- 2f(0)h\widehat m \int_0^t (f,\bar A_s) ds.$$
In the case where the density exists,
\begin{multline*}
\big<\tilde{M}^{f,\infty}\big>_t
= f^2(0) \frac{w}{n-h} \big(e^{(n-h)t}-1\big) \int a_0(x)dx \\
+ h \int_0^t \int f^2(x)a(x,s)dx ds
- 2f(0)h\widehat m \int_0^t \int f(x)a(x,s)dx ds,
\end{multline*}
where
$$a(x,t) = \begin{cases} a(0,t-x)e^{-hx}, x\le t \\ a_0(x-t)e^{-ht}, x>t \end{cases}$$
with $a(0,t)=n\int a(x,t)dx$ and $a(x,0)=a_0(x)$.
In particular,
$$\big<\tilde{M}^{1,\infty}\big>_t = \frac{w+h-2h\widehat m}{n-h} (1,\bar A_0) (e^{(n-h)t}-1).$$


\section*{Acknowledgements}
This research was supported by the Australian Research Council Grant DP150103588.
The authors are grateful to the anonymous referees for their valuable comments that led to an improvement of the paper.

\end{document}